\theoremstyle{plain}
\newtheorem{theorem}{Theorem}[section]
\newtheorem{lemma}[theorem]{Lemma}
\newtheorem{corollary}[theorem]{Corollary}
\theoremstyle{definition}
\newtheorem{definition}[theorem]{Definition}
\newtheorem{assumption}[theorem]{Assumption}
\theoremstyle{remark}
\newcommand{\clA}{\mathcal{A}}
\newcommand{\clB}{\mathcal{B}}
\newcommand{\clF}{\mathcal{F}}
\newcommand{\clK}{\mathcal{K}}
\newcommand{\clM}{\mathcal{M}}
\newcommand{\clP}{\mathcal{P}}
\newcommand{\clS}{\mathcal{S}}
\newcommand{\bbE}{\mathbb{E}}
\newcommand{\bbF}{\mathbb{F}}
\newcommand{\bbN}{\mathbb{N}}
\newcommand{\bbP}{\mathbb{P}}
\newcommand{\bbR}{\mathbb{R}}
\icmltitlerunning{Convergence of Policy Gradient for Entropy Regularized MDPs in the Mean-Field Regime}
\begin{document}

\twocolumn[
\icmltitle{Convergence of Policy Gradient for Entropy Regularized MDPs \\ with Neural Network Approximation in the Mean-Field Regime}

% It is OKAY to include author information, even for blind
% submissions: the style file will automatically remove it for you
% unless you've provided the [accepted] option to the icml2022
% package.

% List of affiliations: The first argument should be a (short)
% identifier you will use later to specify author affiliations
% Academic affiliations should list Department, University, City, Region, Country
% Industry affiliations should list Company, City, Region, Country

% You can specify symbols, otherwise they are numbered in order.
% Ideally, you should not use this facility. Affiliations will be numbered
% in order of appearance and this is the preferred way.
\icmlsetsymbol{equal}{*}

\begin{icmlauthorlist}
\icmlauthor{Bekzhan Kerimkulov}{equal,uoe}
\icmlauthor{James-Michael Leahy}{equal,imp}
\icmlauthor{David \v{S}i\v{s}ka}{equal,uoe,vega}
\icmlauthor{{\L}ukasz Szpruch}{equal,uoe,ati}
%\icmlauthor{}{sch}
%\icmlauthor{}{sch}
\end{icmlauthorlist}

\icmlaffiliation{uoe}{School of Mathematics, University of Edinburgh, Edinburgh, United Kingdom}
\icmlaffiliation{imp}{Department of Mathematics, Imperial College London, London, United Kingdom}
\icmlaffiliation{ati}{The Alan Turing Institute, London, United Kingdom}
\icmlaffiliation{vega}{Vega Protocol, Gibraltar, Gibraltar}

\icmlcorrespondingauthor{James-Michael Leahy}{j.leahy@imperial.ac.uk}
%\icmlcorrespondingauthor{Firstname2 Lastname2}{first2.last2@www.uk}

% You may provide any keywords that you
% find helpful for describing your paper; these are used to populate
% the "keywords" metadata in the PDF but will not be shown in the document
\icmlkeywords{MDP, policy gradient, entropy regularization, mean-field, non-linear Fokker--Planck--Kolmogorov equation}

\vskip 0.3in
]

% this must go after the closing bracket ] following \twocolumn[ ...

% This command actually creates the footnote in the first column
% listing the affiliations and the copyright notice.
% The command takes one argument, which is text to display at the start of the footnote.
% The \icmlEqualContribution command is standard text for equal contribution.
% Remove it (just {}) if you do not need this facility.

%\printAffiliationsAndNotice{}  % leave blank if no need to mention equal contribution
\printAffiliationsAndNotice{\icmlEqualContribution} % otherwise use the standard text.

\begin{abstract}
We study the global convergence of policy gradient for infinite-horizon, continuous state and action space, and entropy-regularized Markov decision processes (MDPs). 
We consider a softmax policy with (one-hidden layer) neural network approximation in a mean-field regime. 
Additional entropic regularization in the associated mean-field probability measure is added, and the corresponding gradient flow is studied in the 2-Wasserstein metric. 
We show that the objective function is increasing along the gradient flow.
Further, we prove that if the regularization in terms of the mean-field measure is sufficient, the gradient flow converges exponentially fast to the unique stationary solution, which is the unique maximizer of the regularized MDP objective. Lastly, we study the sensitivity of the value function along the gradient flow with respect to regularization parameters and the initial condition.
Our results rely on the careful analysis of the non-linear Fokker--Planck--Kolmogorov equation and extend the pioneering work of \cite{mei2020global} and \cite{agarwal2020optimality}, which quantify the global convergence rate of policy gradient for entropy-regularized MDPs in the tabular setting. 
\end{abstract}

\section{Introduction}

\subsection{Overview}

In the last decades, reinforcement learning (RL) algorithms with neural network approximation have demonstrated incredible performance. Notable successes have been reported in \cite{mnih2015human,silver2018general,vinyals2019grandmaster}.  
Various versions of the policy gradient algorithms have been demonstrated to be particularly effective. 
However, a mathematical theory that provides guarantees for the convergence of these algorithms has been elusive. 
In particular, we are not aware of any work that has established convergence of a policy gradient algorithm in the continuous state and action setting with neural network approximation. The motivation behind this work is to shed more light on this challenging open question in the setting of entropy regularized MDPs (Markov decision processes) where the policy is of softmax type and parameterized by a one-hidden layer neural network in the mean-field regime.

Entropy regularized MDPs have been widely studied due to their excellent empirical performance and desirable theoretical properties  \cite{haarnoja2017reinforcement, geist2019theory}. Convergence of policy gradient with softmax policies was first studied for the entropy regularized problem in \cite{agarwal2020optimality}. 
The convergence has been further quantified in \cite{mei2020global}, where the authors showed that softmax policy gradient converges at an exponential rate.  
Encouraged by the recent convergence results of \cite{mei2020global} in the tabular setting, we  consider the continuous state and action space setting with policies approximated mean-field neural networks. The mean-field setting was also considered in \cite{agazzi2020global}. The authors show that if the mean-field policy gradient flow converges, then, under appropriate assumptions, the limiting policy is optimal.

We consider an infinite horizon Markov decision model $\mathcal{M}=(S,A,P,r,\gamma)$,
where $S$ is the state space, $A$ the action space with a fixed finite reference measure $\mu$, $P$ the transition probability kernel, $r$ is a bounded reward function, and $\gamma$ is the discount factor. 
For a given stochastic policy $\pi: S\rightarrow \mathcal{P}(A)$ and initial distribution $\rho\in \clP(S)$, we consider the entropy-regularized value function
\[
V^{\pi}_{\tau}(\rho)= \mathbb{E}_{\rho}^{\pi}\left[\sum_{n=0}^{\infty}\gamma^{n} \left(r(s_n,a_n)-\tau\,\ln\frac{d\pi}{d\mu}(a_n|s_n)\right)\right]\,,
\]
where $\tau \geq 0$ determines the intensity of the entropy regularization. Denoting $V^{\pi}(s)=V^{\pi}(\delta_s)$, we have
\begin{equation}\label{eq:intro:policy_Bellman_KL_occ}
	V^{\pi}_{\tau}(s)= V^{\pi}_0(s)-\frac{\tau}{1-\gamma}\int_{S}d^{\pi}(ds'|s)\textnormal{KL}(\pi(\cdot|s')|\mu)\,,
\end{equation}
where $d^\pi$ is the occupancy kernel under the policy $\pi$.
For full details on our assumptions and notation, we refer to Sections~\ref{sec:notation} and~\ref{sec:MDP}. 
There are two implications of having $\tau>0$. 
The first is that the optimal policy satisfies 
\[
\pi^*_{\tau}(da|s) = \exp\left(\frac1\tau\left(Q^{\ast}_{\tau}(s,a)-V^{\ast}_{\tau}(s)\right)\right)\mu(da)\,,
\]
where $V^*_{\tau}$ and $Q^*_{\tau}$ denote the optimal value and state-action value functions, respectively (see Theorem \ref{thm:DPP} or \cite{ziebart2010modeling, haarnoja2017reinforcement, geist2019theory}). 
Second, since the entropy term   is strictly convex \cite{dupuis1997weak}[Sec.\ 1.4], the addition (see, e.g., \eqref{eq:intro:policy_Bellman_KL_occ}) is expected to improve the convergence when optimizing $V^{\pi}_{\tau}(\rho)$ with respect $\pi$ with a gradient-descent-type method such as softmax policy gradient.
While the latter point may seem intuitive, the analysis is far from being straightforward even in the tabular case studied in \cite{mei2020global}, where the entropic regularization  is shown to lead to exponential convergence of the softmax policy gradient algorithm.
The difficulty arises chiefly from the fact that $V^{\pi}_\tau$ depends on $\pi$ in a non-linear and, in general, non-convex way.

In the  case where $S$ and $A$ are finite sets (i.e., the tabular setting), \cite{mei2020global} considered softmax policies $\pi_F: S\rightarrow \clP(A)$ of the form \begin{equation}\label{eq:softmax}
	\pi_F(a|s)\propto \exp(F(s,a))
\end{equation}
for $F: S\times A\rightarrow \bbR$, and where we have identified $\pi_F$ with its probability mass function. 
Since $F$ is equivalent to a parameter $\theta \in \bbR^{|S|\times |A|}$, 
policy gradient is a gradient flow in $\bbR^{|S|\times |A|}$. This approach becomes computationally intractable as the size of the sets $S$ and $A$ grow large.
To overcome this and also to cover the continuous state-action setting, we use function approximation.  
More specifically, we consider policies approximated by a one hidden layer neural network  with mean-field scaling:
\begin{equation*}
	\pi_\theta(da|s) \propto \exp\left(\frac{1}{m}\sum_{i=1}^ m f(\theta^i,s,a)\right)\mu(da) \,,\theta \in \bbR^{d\times m}\,,
\end{equation*}
where $f: \bbR^d\times S\times A\rightarrow \bbR$.
The continuous-time policy gradient  is given by
\[
\frac{d}{dt}\theta_t = \nabla V^{\pi_{\theta_t}}_{\tau}(\rho)\,,
\]
where here and later $\nabla$ means $\nabla_\theta$ unless stated otherwise. One can equivalently view the policy as a function of an empirical measure over parameters $\nu_t^m := \frac{1}{m}\sum_{i=1}^m \delta_{\theta_t^i}$:
\[
\pi_{\nu_t^m}(da|s) \propto \exp\left(\int_{\mathbb{R}^d}f(\theta,s,a)\nu^m_t(d\theta)\right)\mu(da)\,.
\]
While the one-hidden-layer neural network is typically non-convex as a function of the parameters $\theta$, once lifted to the space of measures it becomes linear as a function of measure.

The idea of using mean-field scaling to understand the convergence of gradient descent algorithms used to train neural networks has been extensively studied in a recent series of works, see~\cite{mei2018mean,rotskoff2018neural,chizat2018global,sirignano2021mean,hu2019mean}. 
These works show that in the context of supervised learning, finding the optimal weights in deep neural networks can be viewed as a sampling problem.
What emerges is the idea that the aim of the learning algorithm is to find optimal distribution over the parameter space rather than optimal values of the parameters. 
As a consequence, individual values of the parameters are not important in the sense that different sets of weights sampled from the correct (i.e., optimal) distribution are equally good.

The key feature of the supervised learning task studied by the aforementioned mentioned works is that a finite-dimensional non-convex optimization problem becomes convex when lifted to an infinite-dimensional space of measures; that is, working with the limiting measure $\nu^m \rightarrow \nu$ as $m\rightarrow \infty$. This observation, unfortunately, does not hold in RL problem with policies $\pi_{\nu}: S\rightarrow \clP(A)$ indexed by probability measures $\nu\in \clP(\bbR^d)$ of the form
\[
\pi_{\nu}(da|s) \propto \exp\left(\int_{\mathbb{R}^d}f(\theta,s,a)\nu(d\theta)\right)\mu(da)\,
\]
due to the non-linearity and non-convexity of the problem  in $\pi$, and hence $\nu$. Despite this, in \cite{agazzi2020global}, the authors show that if the policy gradient flow for the mean-field neural network parameters $\nu_t$ converges to $\nu^\ast$ with full support, then the resulting parameterized policy $\pi_{\nu^*}$ is globally optimal for the entropy regularized problem (i.e., $V^*=V^{\pi_{\nu^{\ast}}}$) under the assumption that the activation function used leads to a sufficiently expressive class of functions. 
However, the work of \cite{agazzi2020global} does not establish that the flow converges. 

In general,   $\nu \in \mathcal{P}(\mathbb{R}^d) \mapsto V^{\pi_{\nu}}_{\tau}$ is not convex, and hence one should not expect convergence to the global optimum  for the associated policy gradient algorithm.
To alleviate this issue and establish global convergence rates for policy gradient, we introduce further entropic regularization of $\nu$ relative to a prior $e^{-U}$ for some normalized potential $U$. 
For arbitrarily given $\tau\ge0$, $\sigma\ge0$, and $\rho\in \mathcal{P}(S)$, we define $J^{\tau,\sigma}:\mathcal P_2(\mathbb R^d) \rightarrow \mathbb R$ by
\begin{equation} \label{eq objective-intro}
	J^{\tau,\sigma}(\nu)=V^{\pi_\nu}_{\tau}(\rho)-\frac{\sigma^2}{2}\text{KL}(\nu|e^{-U})\,.
\end{equation}
In Theorem \ref{thm:increasing_objective}, we  prove that $J^{\tau,\sigma}(\nu_t)$ increases as function of $t$ along the gradient flow for any $\tau \geq 0$, $\sigma\geq 0$. This fact is missing in \cite{agazzi2020global}, which considered the case $\sigma=0$. Moreover, for any $\tau \geq 0$ and for sufficiently large $\sigma > 0$, we show that $\nu_t \to \nu^\ast$ exponentially fast and that $\nu^\ast$ is the optimal parameter measure, see Theorem~\ref{thm:ergodicity}. While having $\tau>0$ motivates considering policies of softmax type (see Theorem \ref{thm:DPP} below), this condition is not needed to prove any of our results. How large $\sigma$ needs to be taken depends on the bounds we establish in Theorem \ref{thm:bnd_reg_Lion_deriv}. In Corollary \ref{cor:sens_value}, we estimate the sensitivity of the value function along the gradient flow with respect to regularization parameters and the initial condition. A heuristic derivation of the gradient flow and conditions for optimality is provided in Section \ref{sec:heuristic} for the readers convenience.

\subsection{Literature Review}
There is an enormous amount of research literature on RL and we cannot hope to do it justice here.  For this reason, we focus on the subset of RL that we feel is most related to this work.
Entropy regularized RL has demonstrated both good algorithmic performance and desirable theoretical properties~\cite{haarnoja2017reinforcement, haarnoja2018soft, geist2019theory,vieillard2020leverage,neu2017unified, fox2015taming, ziebart2010modeling}.
It has been shown that softmax policies are optimal in the entropy regularized setting. In fact, many authors consider softmax parameterized policies and the policy gradient algorithm without  adding entropic regularization.
In the tabular and ``compatible" function approximation case, \cite{agarwal2020optimality} proved convergence of softmax policy gradient at rate $O(1/\sqrt{t})$.  In the tabular case, \cite{mei2020global} showed that softmax policy gradient converges with rate $O(1/t)$. Moreover, using \L ojasiewicz inequalities, they showed that entropic regularization improves the rate of convergence to $O(e^{-t})$. We remark that the authors in \cite{li2021softmax} establish lower bounds for the policy gradient algorithms studied in~\cite{agarwal2020optimality} and~\cite{mei2020global}, and show that these may scale poorly with the cardinality of the state space. In \cite{liu2019neural} and \cite{wang2020neural}, the authors consider consider over-parameterized  neural-network policies and  show that neural proximal/trust region and actor-critic policy optimization, respectively, converges with rate $O(1/\sqrt{t})$ in the idealized setting (exact action-value setting).

As explained above, the tabular softmax policy \eqref{eq:softmax} gradient method  becomes computationally intractable as the size of the sets $S$ and $A$ grow large.
In this paper, we consider continuous state and action spaces and softmax policies approximated by an idealized infinitely-wide one hidden layer neural network (i.e., the mean-field regime). Working in the mean-field regime is motivated in part by the success of a series of recent works which prove the convergence of (noisy) stochastic gradient algorithms arising in the training of neural networks in the static and supervised learning setting, see~\cite{jordan1998variational, mei2018mean,rotskoff2018neural,chizat2018global,sirignano2021mean,hu2019mean} and by the recent work \cite{agazzi2020global} in RL. 
We will discuss the connection~\cite{hu2019mean}
and~\cite{agazzi2020global} in more detail.

In \cite{hu2019mean}, the authors studied the supervised learning setting for a one-hidden layer neural network in the mean-field regime. The key feature of this setting is the convexity of the unregularized (e.g.,  $\sigma=0$) objective. By taking $\sigma>0$, the objective function becomes strictly convex, which allows them to establish convergence to the invariant measure for any $\sigma>0$.
In the RL setting, the objective function is highly non-linear and, in general, non-convex, and hence the analysis of the problem is significantly more involved. 
This can be seen in Theorem \ref{thm:bnd_reg_Lion_deriv}, where necessary boundedness and regularity results of the objective function are obtained. The analysis of the gradient flow \eqref{eq:gf_intro} in the current work is done using PDE techniques that are different from the tools used in \cite{hu2019mean} and require less regularity.
This enables us to establish that the objective function is decreasing along the flow even when $\sigma=0$, which, even in a supervised setting, is a new result.

In~\cite{agazzi2020global}, the authors consider mean-field softmax parameterized policies and derive the gradient flow for the parameter measure, see \eqref{eq:gf_intro} with $\sigma=0$. 
Their main theorem says that if $\nu^{\ast}$ is a stationary solution of~\eqref{eq:gf_intro} with full support and the family of functions $(s,a)\mapsto f(s,a,\theta)$ indexed by $\theta\in \bbR^d$  gives rise to a sufficiently expressive class of functions, then $V^{\pi_{\nu^{\ast}}}_{\tau}=V^{\ast}_{\tau}$, which implies that the parameterized policy is optimal. 
Whether there exists a solution to the stationary equation in the space of probability measures is unknown.
Optimality is not established by directly showing $\nu^{\ast} \in\operatorname{argmax}_{\nu}J^{\tau,0}(\nu)$, but rather by showing $V^{\pi_{\nu^{\ast}}}_{\tau}$ satisfies the optimal Bellman equation \eqref{eq:BellmanDPP} for $V^{\ast}_{\tau}$, and hence that $\pi_{\nu^{\ast}} \in\operatorname{argmax}_{\pi}V^{\pi}_{\tau}(\rho)$. 
Since the gradient flow preserves the support of the initial condition $\nu_0$, they are able to conclude that \emph{if} the gradient flow $\nu_t$ converges  to  $\nu^{\ast}$, then $\pi_{\nu^{\ast}}$ is optimal.
Crucially, the work of~\cite{agazzi2020global} does not establish that the flow converges. 
Moreover, it does not establish that the gradient flow is non-decreasing; our Theorem \ref{thm:apriori_grad_flow} establishes this for $\tau,\sigma \ge 0$. 
The insight gained by~\cite{agazzi2020global} is that the value function plays a vital role as the solution to the Bellman equation. 
We no longer study the original value function $J^{\tau,0}(\nu)=V^{\pi_{\nu}}_{\tau}$, but rather $J^{\tau,\sigma}$, which with $\sigma >0$ introduces regularization of the parameter measure, and does not solve a Bellman equation. 
Hence, we must employ different techniques to show optimality (see the proof of Theorem \ref{thm:ergodicity}). In particular, we make use of Theorem~\ref{thm:apriori_grad_flow}, which is a step toward understanding what is happening at the level of the value function. 
Our other main theorem, Theorem \ref{thm:ergodicity}, is meant to complement the work of~\cite{agazzi2020global} by identifying sufficient conditions under which one can conclude that $\nu_t\to\nu^\ast$ at a rate $O(e^{-\beta t})$, where $\beta > 0$ is a constant quantified in our analysis.

\subsection{Heuristic Derivation of the Gradient Flow and Conditions for Optimality}
\label{sec:heuristic}

Our aim is to maximize $J^{\tau,\sigma}:\mathcal P_2(\mathbb R^d) \rightarrow \mathbb R$  defined in~\eqref{eq objective-intro}. 
In this section, we present a heuristic computation in the spirit of Otto calculus \cite{villani2009optimal} through which we identify the gradient flow equation. For simplicity, in this section, we will restrict to absolutely continuous measures $\nu\in \mathcal P_2^{\textnormal{ac}}(\mathbb R^d)$ and abuse notation and identify $\nu$ with its density $d\nu/d\theta$.  At the end of this section, we explain how this abstract gradient flow relates to a familiar noisy gradient ascent.  
Let $E:\bbR_+ \times \mathbb R^{d} \rightarrow \mathbb R^{d}$ denote a time-dependent vector field and consider the gradient flow 
\[
\partial_t \nu_t = \nabla \cdot ( E_t \nu_t)\,.
\]
Our aim is to identify the vector field $E$ so that $J^{\tau,\sigma}(\nu_t)$ is increasing as a function of $t$.  To do this, we will use differential calculus on $\mathcal{P}_2(\mathbb{R}^d)$ equipped with the 2-Wasserstein distance, see~\cite{ambrosio2008gradient,carmona2018probabilistic}.  Ignoring for the moment that entropy is only lower semi-continuous, and hence that the linear functional derivative  $\frac{\delta J^{\tau,\sigma}}{\delta \nu}: \clP_2(\bbR^d)\times \bbR^d\rightarrow \bbR$ does not exists (see Definition \ref{def:lin_func_der}), we have 
\[
\begin{split}
 \partial_t J^{\tau,\sigma}(\nu_t) & = \lim_{h \rightarrow 0} \frac{J^{\tau,\sigma}(\nu_{t +h}) - J^{\tau,\sigma}(\nu_{t})}{h}\\
& =  \lim_{h \rightarrow 0}\frac{1}{h} \! \int^{1}_0 \!\! \int_{\mathbb{R}^d} \!\! \frac{\delta J^{\tau,\sigma}}{\delta \nu}(\nu^{\varepsilon,h}_t) (\nu_{t+h} - \nu_t )\,d\theta \,d\varepsilon\,,
\end{split}
\]
where $\nu_t^{\varepsilon,h}:=  \nu_t + \varepsilon(\nu_{t+h} - \nu_t)$.
Since $\nu_t^{\varepsilon,h} \rightarrow \nu_t$ as $\varepsilon \rightarrow 0$, integrating by parts, we obtain
\[
\begin{split}
	\partial_t J^{\tau,\sigma}(\nu_t) 
	&	=  \int_{\mathbb R^d}  \frac{\delta J^{\tau,\sigma}}{\delta \nu}(\nu_t)\nabla \cdot ( E_t \nu_t)\,d\theta \\
	&= - \int_{\mathbb R^d}  \nabla\frac{\delta J^{\tau,\sigma}}{\delta \nu}(\nu_t)  E_t \nu_t \, d\theta\,.
\end{split}
\]
To ensure that $J^{\tau,\sigma}(\nu_t)$ is increasing, one must take (up to a multiplicative constant) $
E_t := -\nabla\frac{\delta J^{\tau,\sigma}}{\delta \nu}(\nu_t)$
so that 
\[
\partial_t J^{\tau,\sigma}(\nu_t) = \int_{\mathbb R^d} \left|\nabla\frac{\delta J^{\tau,\sigma}}{\delta \nu}(\nu_t)\right|^2 \nu_t\, d\theta \geq 0\,.
\]
It can be shown that along the gradient flow, we have
$$
\frac{\delta J^{\tau,\sigma}}{\delta \nu} (\nu_t,\theta) = \frac{\delta J^{\tau,0}}{\delta \nu}(\nu_t,\theta) - \frac{\sigma^2}{2}\left(U(\theta) + \ln \nu_t(\theta) + 1 \right)\,,
$$
where we recall that $J^{\tau,0}(\nu)=V^{\pi_\nu}_{\tau}$.
In Lemma \ref{lem:func_deriv_J}, we compute $\frac{\delta J^{\tau,0}}{\delta \nu}(\nu_t,\theta)$, and if $\tau=0$,  the  expression is equivalent to the expression arising in policy gradient, see~\cite{sutton2018reinforcement}[Ch.\ 13], with the difference being that our policies  are parameterized by measures, and so measure derivatives appear instead of classical gradients. 
Thus, the gradient flow is given by
\begin{equation} \label{eq:gf_intro}
	\partial_t \nu_t = \nabla \cdot \left[ \left(-\nabla\frac{\delta J^{\tau,0}}{\delta \nu}(\nu_t) + \frac{\sigma^2}{2} \nabla U \right) \nu_t \right] + \frac{\sigma^2}{2} \Delta \nu_t\,.  
\end{equation}

In Theorem~\ref{thm:optimality} and Corollary~\ref{cor:local_max_PDE}, below, we show that any local maximum $\nu^\ast$ of $J^{\tau,\sigma}$ must satisfy the first order condition
$
\theta \mapsto \frac{\delta J^{\tau,\sigma}}{\delta \nu} (\nu^{\ast},\theta)  \textnormal{ is  constant},\,
$
or equivalently that $\nu^\ast$ is a stationary solution of~\eqref{eq:gf_intro}. This also implies that in the case $\sigma>0$, $\nu^{\ast}$ satisfies 
$$
\nu^{\ast}(\theta) \propto \exp\left(\frac{2}{\sigma^2} \frac{\delta J^{\tau,0}}{\delta \nu}(\nu^{\ast},\theta) -U(\theta) \right)\,,
$$
which can be viewed as a posterior distribution over the parameters space given the prior $e^{-U}$. 

On the other hand, if $\nu_t$ converges to a stationary solution of \eqref{eq:gf_intro}, then 
using the fact that  $J^{\tau,\sigma}(\nu_t)$ is increasing, we can conclude that $\nu^\ast$ is a maximum as long as there is a unique stationary solution.
Thus, we see that the question of the existence of the unique global minimizer is related to the existence of a unique stationary solution of \eqref{eq:gf_intro}. 
However, from the general theory of nonlinear Fokker--Planck--Kolmogorov equations, one only expects existence and uniqueness of solutions to the stationary equation if $\sigma>0$ is large (see, e.g., \cite{bogachev2019convergence}[Ex.\ 1.1], \cite{bogachev2018distances}[Ex.\ 4.3], \cite{manita2015uniqueness}[Sec.\ 6]).

To implement the gradient flow \eqref{eq:gf_intro} one needs to compute or estimate $\nabla \frac{\delta J^{\tau,0}}{\delta \nu}(\nu,\theta)$, where $\frac{\delta J^{\tau,0}}{\delta \nu}(\nu,\theta)$ is specified in Lemma \ref{lem:func_deriv_J}.
The policy gradient method is a cornerstone of RL precisely because the terms $d^{\pi_{\nu}}_{\rho}$ and $Q_{\tau}^{\pi_{\nu}}$ can be estimated using Monte Carlo (e.g., Reinforce) from roll-outs without having to explicitly know the transition probability.
Instead of working with the non-linear PDE \eqref{eq:gf_intro}, one can use the probabilistic representation  $\nu=\textnormal{Law}(\theta)$, where  $\theta: \Omega\times  \bbR_+\rightarrow \mathbb{R}^d$ is the solution of the McKean--Vlasov stochastic differential equation (SDE)
\begin{equation*}
	d\theta_t= \left(\nabla \frac{\delta J^{\tau,0}}{\delta \nu}(\textnormal{Law}(\theta_t),\theta_t) - \frac{\sigma^2}{2} \nabla U(\theta_t)\right)dt+\sigma dW_t\,,
\end{equation*}
and where $W$ is a $d$-dimensional Wiener process. Approximating $\nu$ with its empirical measure and discretizing in time with a learning rate $\eta$,  we arrive at the familiar noisy gradient ascent algorithm 
\begin{equation}\label{eq particle system}
	\left\{
	\begin{aligned}
		\theta^i_{k+1} & =\theta^i_{k} + \eta \left(  \nabla\frac{\delta J^{\tau,0}}{\delta \nu}(\nu_k^m,\theta^i_k) - \frac{\sigma^2}{2} \nabla U(\theta^i_k) \right) \\
		&\quad + \sqrt{\eta}\sigma \zeta^i_{k+1}\,, \;\;k\in \mathbb{N}_0\,,\\
		\nu^m_k & :=\frac{1}{m}  \sum_{i=1}^m \delta_{\theta_k^i}\,,
	\end{aligned}
	\right.
\end{equation}
where $\{\zeta^i_k\}_{1\le i\le m, k\in \bbN_0} \overset{\textnormal{i.i.d.}}{\sim} N(0,1)$. 
The convergence rates of \eqref{eq particle system} to the gradient flow \eqref{eq:gf_intro} (or its probabilistic representation) are well understood under general conditions. For example, we refer a reader to \cite{jabir2019mean}[Thms.\ 8 and 9], in which such analysis has been carried out in the context of training  recurrent neural networks, and to \cite{delarue2021uniform}, where uniform in time weak particles approximation error has been studied. The approximations errors between $\nu^m_k$ given by~\eqref{eq particle system} and $\nu_t$, with $t=k\eta$, are of $O(1/m)+O(\eta)$ uniformly in time, and this can be seen as proxy for algorithmic complexity.

\subsection{Outline of the Paper}

The main contributions of this work are
\begin{enumerate}
	\item Theorem~\ref{thm:apriori_grad_flow}, which proves that the objective $J^{\tau,\sigma}(\nu_t)$ increases along the gradient flow equation for the evolution of $(\nu_t)_{t\in \bbR_+}$ corresponding to policy gradient;
	\item Theorem~\ref{thm:ergodicity}, which proves exponential convergence of $\nu_t$ to $\nu^{\ast} $ if $\sigma > 0 $ is sufficiently large.
	\item Theorem~\ref{thm:cts_dep_flow} and Corollary~\ref{cor:sens_value}, where  the dependence of the gradient flow and its stationary solutions and of the value function on $\tau$ and $\sigma$ is quantified.
\end{enumerate}
The notation and essential definitions are introduced in Section~\ref{sec:notation}. Entropy regularized MDPs are introduced in Section~\ref{sec:MDP}.  The main results of the paper are stated Section~\ref{sec:param_MDP}, the proofs of which will be given in Section \ref{sec:proofs}.
Conclusions and possible future research directions are discussed in Section~\ref{sec:conc_and_future_work}. A heuristic derivation of our results is also provided in Section \ref{sec:heuristic}, which sheds more light on our work and the mean-field approach.

\subsection{Notation and Definitions}\label{sec:notation}

Let $\mathbb R_+ := [0,\infty)$. 
For given $k\in \mathbb N_0\cup{\infty}$, let $C^k(\bbR^d)$ denote the space of $k$-times continuously differentiable functions and $C^k_b(\bbR^d)$ the subspace of functions of $C^k(\bbR^d)$ for which the function and all the derivatives up to order $k$ are bounded.
Let $C^k_c(\bbR^d)$ denote the subspace of $C^k(\mathbb R^d)$ of functions with compact support. 
We will use standard notation $L^p$, $p\in [1,\infty]$, for Lebesgue spaces of integrable functions. 

Let $(E,d)$ denote a complete separable metric space (i.e., a Polish space). We always equip a Polish space with its Borel sigma-field $\mathcal{B}(E).$ Denote by $B_b(E)$ the space of bounded strongly measurable functions $f:E\rightarrow \mathbb{R}$ endowed with the supremum norm $|f|_{B_b(E)}=\sup_{x\in E}|f(x)|$. Denote by $\mathcal{M}(E)$ the Banach space of signed measures (finite) $\mu$ on $E$ endowed with the total variation norm $|\mu|_{\mathcal{M}(A)}=|\mu|(E)$, where $|\mu|$ is the total-variation measure. 
We note that if $\mu=f d\rho$, where $\rho \in \mathcal{M}_+(E)$ is a non-negative measure  and $f\in L^1(E,\rho)$, then  $|\mu|_{\mathcal{M}(E)}=|f|_{L^1(E,\rho)}$. 
We denote by $\mathcal{P}(E)\subset\mathcal{M}(E)$ the convex subset of probability measures on $E$. For $\mu,\mu'\in \mathcal{P}(E)$ such that $\mu$ is absolutely continuous with respect to $\mu'$, the relative entropy of $\mu$ with respect to $\mu'$ (or Kullback-Liebler divergence of $\mu$ relative to $\mu'$) is defined by 
$$
\textnormal{KL}(\mu|\mu')=\int_{E}\ln\frac{d\mu}{d\mu'}(x)\mu(dx)\,.
$$
For measure $\mu\in \mathcal{M}_+(E)$ and measurable function $f:E\rightarrow \mathbb{R}$, let 
$$
\mu-\underset{x\in E}{\textnormal{ess sup}} f = \inf \{ c\in \mathbb{R}: \mu\{x\in E: f(x)>c\}=0\}\,.
$$

It is convenient to have notation for measurable functions $k:E_1\rightarrow \mathcal{M}(E_2)$ for given Polish spaces $(E_1,d_1)$ and $(E_2,d_2)$. For example, $P:S\rightarrow \mathcal{P}(S\times A)$ will denote a controlled transition probability and $\pi: S\rightarrow \mathcal{P}(A)$ will denote a stochastic policy. Denote by $b\mathcal{K}(E_1|E_2)$ the Banach space of bounded signed kernels $k: E_2\rightarrow \mathcal{M}(E_1)$ endowed with the norm $|k|_{b\mathcal{K}(E_1|E_2)}=\sup_{x\in E_2}|k(x)|_{\mathcal{M}(E_1)}$; that is, $k(U|\cdot): E_2\rightarrow \mathbb{R}$ is measurable for all $U\in \mathcal{M}(E_1)$ and $k(\cdot|x)\in \mathcal{M}(E_1)$ for all $x\in E_2$. For a fixed positive reference measure $\mu \in \mathcal{M}(E_1)$, we denote by $b\mathcal{K}_{\mu}(E_1|E_2)$ the space of bounded kernels that are absolutely continuous with respect to $\mu$.

Every kernel $k\in b\mathcal{K}(E_1|E_2)$ induces bounded linear operators $T_k \in \mathcal{L}(\mathcal{M}(E_2),\mathcal{M}(E_1))$ and $S_k \in \mathcal{L}(B_b(E_1),B_b(E_2))$ defined by
$$
T_k\mu(dy)=\mu k(dy)=\int_{E_2}\mu(dx)k(dy|x)$$
and 
$$S_kf(x)=\int_{E_1}k(dy|x)f(y)\,,
$$
respectively.
Moreover, by \cite{kunze2011pettis}[Ex.\ 2.3 and Prop.\ 3.1], we have
\begin{align*}
|k|_{b\mathcal{K}(E_1|E_2)}&=\sup_{x\in E_2}\underset{|h|_{B_b(E_1)}\le 1}{\sup_{h\in B_b(E_1)}}\int_{E_1}h(y)k(dy|x)\\
&=|S_k|_{\mathcal{L}(B_b(E_1),B_b(E_2))}\\
&=|T_k|_{\mathcal{L}(\mathcal{M}(E_2),\mathcal{M}(E_1))}\,,
\end{align*}
where the latter are operator norms. Thus, $b\mathcal{K}(E|E)$ is a Banach algebra with the product defined via composition of the corresponding linear operators; in particular, for a given $k\in b\mathcal{K}(E|E)$,
\[
\begin{split}
& T_k^n\mu(dy)=\mu k^n(dy)\\
& = \int_{E^{n}}\mu(dx_0)k(dx_1|x_0)\cdots k(dx_{n-1}|x_{n-2}) k(dy|x_{n-1})\,.
\end{split}
\]
Notice that if $f\in L^\infty(E_1,\mu)$ and $k\in b\mathcal{K}_{\mu}(E_1|E_2)$, then for all $x\in E_2$,
\begin{align}
S_kf(x)&=\int_{E_1}\mu(dy)\frac{dk}{d\mu}(y|x)f(y)\notag\\
&\le |f|_{L^\infty(E_1,\mu)} \left|\frac{dk}{d\mu}(\cdot|x)\right|_{L^1(E_1,\mu)}\notag \\
&\le |f|_{L^\infty(E_1,\mu)} |k|_{b\mathcal{K}(E_1|E_2)}\,. \label{ineq:essup_kernel}
 \end{align}

We denote by $\mathcal{P}(E_1|E_2)$ the convex subspace of $P\in b\mathcal{K}(E_1|E_2)$ such that $P(\cdot|x)\in \mathcal{P}(E_1)$ for all $x\in E_2$; such kernels are referred to as stochastic kernels. A  stochastic kernel $P\in \mathcal{P}(E_1|E_2)$ is said to be strongly Feller if $\int_{E_1}P(dy|x)f(y)$ is continuous in $x\in E_2$ for all $f\in B_b(E_1)$. For a fixed positive reference measure $\mu \in \mathcal{M}(E_1)$, we denote by $\mathcal{P}_{\mu}(E_1|E_2)$ the space of kernels that are absolutely continuous with respect to $\mu$. 
A bounded kernel $k\in b\mathcal{K}(E_1|E_2)$ is thus strongly Feller if the range of $S_k$ lies in the space of continuous functions on $E_2$.

\begin{definition}
\label{def:lin_func_der_kernel}
We say a function $u:\clP(E_1|E_2)\rightarrow \bbR$ has a linear functional derivative if there exists a continuous and bounded function $\frac{\delta u}{\delta \pi}: b\clP(E_1|E_2)\rightarrow  b\clK(E_1|E_2)$ such that for all $\pi,\pi'\in \clP(E_1|E_1)$,
\[
\begin{split}
& \underset{\varepsilon \in [0,1]}{\lim_{\varepsilon\rightarrow 0}}\frac{u(\pi+\varepsilon(\pi'-\pi))-u(\pi)}{\varepsilon} \\
& =\int_{E_1\times E_2} \frac{\delta u}{\delta \pi}(ds|a)(\pi'-\pi)(da|s)\,.
\end{split}
\]
\end{definition}

For given $p\in \bbN$, let $\mathcal P_p(\mathbb R^d)$ denote the set of probability measures with finite $p$-th moments.  The $p$-Wasserstein distance between $\nu,\nu' \in \mathcal{P}_p(\mathbb{R}^d)$ is given by
\begin{align*}
	W_p(\nu,\nu')&=\inf_{\pi \in \Pi(\nu,\nu')}\left(\int_{\mathbb{R}^d\times \mathbb{R}^d}|\theta-\theta'|^p\pi(d\theta,d\theta')\right)^{\frac{1}{p}}\,
\end{align*}
where $\Pi(\nu,\nu')$ denotes the set of probability measures on $\mathbb{R}^d\times \mathbb{R}^d$ with $\nu$ and $\nu'$ as first and second marginals, respectively. For all $p,p'\in \bbN$ with $p'>p$ and $\nu,\nu'\in \mathcal{P}_{p'}(E)$, we have 
\begin{equation}\label{ineq:W1W2}
	W_p(\nu,\nu') \le W_{p'}(\nu,\nu')\,.
\end{equation}
The set $\mathcal P_p(\mathbb R^d)$ endowed with the topology induced by the $p$-Wasserstein distance is a complete separable metric space~\cite{villani2009optimal}[Thm.\ 6.18]. 
The Kantorovich--Rubinstein duality theorem~\cite{villani2009optimal}[Thm.\ 5.10] implies that for all $\nu,\nu'\in \clP_1(\bbR^d),$
\begin{equation}\label{eq:KRW1}
	W_1(\nu,\nu')=\sup_{\phi\in \textnormal{Lip}_1(\bbR^d)} \int_{\mathbb{R}^d} \phi(\theta)(\nu-\nu')(d\theta)\,,
\end{equation} 
where $\textnormal{Lip}_1(\bbR^d)$ denotes the space of all functions $\phi:\mathbb R^d\to \mathbb R$ with a Lipschitz constant $1$.
Henceforth, we let $\clP_0(\bbR^d)=\clP(\bbR^d)$ and recall that this convex subset of $\clM(\bbR^d)$ is endowed with the topology of total variation distance. 
We denote the Lebesgue measure on $\bbR^d$ by $\lambda$. Denote by $\mathcal P_p^{\text{ac}}(\mathbb R^d)$  the subset of $\mathcal P_p(\mathbb R^d)$ consisting of measures that absolutely continuous with respect to $\lambda$. We abuse notation and identify $\nu=\frac{d\nu}{d\theta}:=\frac{d\nu}{d\lambda}$ in $\mathcal P_p^{\text{ac}}(\mathbb R^d)$. 
Let $C(\bbR_+; \clP_p(\bbR^d))$ denote the space of continuous functions $\nu:\mathbb R^+ \to \mathcal P_p(\mathbb R^d)$.

\begin{definition}\label{def:lin_func_der} Let $p\in \bbN_0.$
	Let $(V,|\cdot|_V)$ denote a  Banach space.   We say a function $u: \mathcal{P}_p(\mathbb{R}^d)\rightarrow V$ has a linear functional derivative if there exists a continuous and bounded function $\frac{\delta u}{\delta \nu}:  \mathcal{P}_p(\mathbb{R}^d)\times \mathbb{R}^d\rightarrow V$ such that  for all $\nu,\nu'\in\mathcal{P}_p(\mathbb{R}^d)$, 
	\begin{gather*}
	\underset{\varepsilon \in [0,1]}{\lim_{\varepsilon\rightarrow 0}}\frac{u(\nu+\varepsilon(\nu'-\nu))-u(\nu)}{\varepsilon}\\
 =\int_{\mathbb{R}^d} \frac{\delta u}{\delta \nu}(\nu,\theta)(\nu'-\nu)(d\theta)\,.
	\end{gather*}
\end{definition}
Owing to the fundamental theorem of calculus a function $u: \mathcal{P}_p(\mathbb{R}^d)\rightarrow V$ has a linear functional derivative if and only if there exists a continuous and bounded function $\frac{\delta u}{\delta \nu}:  \mathcal{P}_p(\mathbb{R}^d)\times \mathbb{R}^d\rightarrow V$ such that  for all $\nu,\nu'\in\mathcal{P}_p(\mathbb{R}^d)$, 
\begin{equation}
\begin{split}
& u(\nu')-u(\nu)\\
&=\int_0^1 \int_{\mathbb{R}^d} \frac{\delta u}{\delta \nu}(\nu+\varepsilon (\nu'-\nu), \theta)(\nu'-\nu)(d\theta)\,d\varepsilon\,.\label{eq:lin_der_diff_form}
\end{split}
\end{equation}

\section{Formulation and the Statement of the Main Results}

\subsection{Entropy Regularized Markov Decision Processes}
\label{sec:MDP}
We refer readers to~\cite{puterman2014markov,bertsekas2004stochastic,hernandez2012discrete} for thorough introduction Markov decision processes.
Let $S$ and $A$ denote Polish spaces. Let $P\in \mathcal{P}(S|S\times A)$.
Let $r\in B_b(S\times A)$. Let  $\gamma\in [0,1)$.  Let $\tau\in \bbR_+$ and $\mu\in \mathcal{M}_+(A)$ denote a strictly positive finite measure. 
The seven-tuple $\mathcal{M}=(S,A,P,r,\gamma,\tau, \mu)$ determines an infinite horizon $\tau$-entropy regularized $\gamma$-discounted Markov decision model. Here, we note, that $\tau=0$ is considered, and if $\tau=0$, then $\clM$ is a infinite horizon Markov decision model $\mathcal{M}=(S,A,P,r,\gamma)$ since $\mu$ is not needed to formulate the control problem. If $\tau=0$, we further assume that $A$ is compact, $P(\cdot|\cdot,a)\in \mathcal{P}(S|S)$ is strongly Feller for all $a\in A$, and that $r(s,\cdot):A\rightarrow \mathbb{R}$ is upper semi-continuous for every $s\in S$ so that Condition 3.3.3 in \cite{hernandez2012discrete} holds, and thus measurable selection condition holds. 

Let $((S\times A)^{\bbN},\mathcal{F})$ denote the canonical sample space, where $\mathcal{F}$ is the corresponding $\sigma$-algebra. Elements of $(S\times A)^{\bbN}$ are of the form $(s_0,a_0,s_1,a_1,\ldots)$ with $s_n\in S$ and $a_n\in A$ denoting the projections and called the state and action variables, at time $n\in \mathbb{N}_0$, respectively. By Proposition 7.28 in \cite{bertsekas2004stochastic}, for an arbitrarily given initial distribution $\rho\in \mathcal{P}(S)$ and randomized stationary policy $\pi \in \mathcal{P}(A|S),$ there exists a unique product probability measure $\bbP^{\pi}_{\rho}$ on the canonical sample space with expectation denoted $\mathbb{E}^{\pi}_{\rho}$ such that for every time $n\in \mathbb{N}_0$,  $\bbP^{\pi}_{\rho}(s_0\in \clS)=\rho(\clS)$, $\bbP^{\pi}(a_n\in \clA|(s_0,a_0,\ldots,s_n))=\pi(a_n|s_n)$, and $$\bbP^{\pi}_{\rho}(s_{n+1}\in \clS|(s_0,a_0,\ldots,s_n,a_n))=\clP(\clS|s_n,a_n)$$ for all $\clS\in \clB(S)$ and $\clA\in \clB(A)$.
In particular, $\{s_n\}_{n\in \mathbb{N}_0}$ is a Markov chain with kernel $P_{\pi}\in \mathcal{P}(S|S)$ given by
$$
P_{\pi}(ds'|s)=\int_{A}P(ds'|s,a')\pi(da'|s)\,.
$$
For given $s\in S$, we denote $\mathbb{E}^{\pi}_{s}=\mathbb{E}^{\pi}_{\delta_s}$, where $\delta_s\in \mathcal{P}(S)$ denotes the Dirac measure at $s\in S$. 

Let $\Pi_{0}= \mathcal{P}(A|S)$ and $$\Pi_{\tau}=\{\exp(F(s,a))\mu(da)\in \mathcal{P}_{\mu}(A|S) | F\in B_b(S\times A)\}$$ if $\tau>0$.
For a given policy $\pi\in\Pi_{\tau}$, we define the $\tau$-entropy regularized value function $V^{\pi}_{\tau}:S\rightarrow \mathbb{R}$  by
$$
V^{\pi}_{\tau}(s)= \mathbb{E}_{s}^{\pi}\left[\sum_{n=0}^{\infty}\gamma^{n} \left(r(s_n,a_n)-\tau\,\ln\frac{d\pi}{d\mu}(a_n|s_n)\right)\right]\,.
$$
For a given policy $\pi\in\Pi_{\tau}$, we define the regularized state-action value function $Q^{\pi}_{\tau}\in B_b(S\times A)$ by
\begin{equation}\label{def:Qpi}
	Q^{\pi}_{\tau}(s,a)=r(s,a)+\gamma\int_{S}V^{\pi}_{\tau}(s')P(ds'|s,a)\,.
\end{equation}
The occupancy kernel $d^{\pi}\in \mathcal{P}(S|S)$ is defined by
$$
d^{\pi}(ds'|s)=(1-\gamma)\sum_{n=0}^{\infty}\gamma^n P^n_{\pi}(ds'|s)\,,
$$
 where $P^0_{\pi}(ds'|s):=\delta_s(ds')$, $P^n_{\pi}$ is understood as a product of kernels, and convergence is understood in $b\mathcal{K}(S|S)$. 
It follows that for all $s\in S$, 
\begin{align}
	V^{\pi}_{\tau}(s)&=\int_{A}\left(r(s,a)-\tau\ln\frac{d\pi}{d\mu}(a|s)\right.\notag \\
	&\qquad \quad 	 \left.+\gamma\int_{S}V^{\pi}_{\tau}(s')P(ds'|s,a)\right)\pi(da|s) \label{eq:policy_Bellman}\\
	&=\frac{1}{1-\gamma}\int_{S}\int_{A}\left(r(s',a')\right. \notag \\
	&\qquad	\left.-\tau \ln \frac{d\pi}{d\mu}(a'|s')\right)\pi(da'|s')d^{\pi}(ds'|s)\,.\label{eq:value_function_occ}
\end{align}
For a given initial distribution $\rho\in\mathcal{P}(S)$,  we define $$V^{\pi}_{\tau}(\rho)=\int_{S}V^{\pi}_{\tau}(s)\rho(ds), \;  d^{\pi}_{\rho}(ds)=\int_{S}d^{\pi}(ds|s')\rho(ds')\,.$$

For each $(s,a)\in S\times A$, we define the measurable optimal value and state-action value functions by
$$
V^{\ast}_{\tau}(s)=\sup_{\pi \in \Pi_{\tau}}V^{\pi}_{\tau}(s) \quad \textnormal{and} \quad  Q^*_{\tau}(s,a)=\sup_{\pi \in \Pi_{\tau}}Q^{\pi}_{\tau}(s,a)\,.
$$
By virtue of \cite{hernandez2012discrete}[Thm.\ 4.2.3] in the $\tau=0$ case and \cite{geist2019theory}[Thm.\ 1] in the $\tau>0$ case (using \cite{dupuis1997weak}[Prop.\ 1.4.2.), we obtain the following dynamical programming principle. See, also, \cite{haarnoja2017reinforcement}[Thm.\ 1 and 2].

\begin{theorem}[Dynamic programming principle for $\tau\ge 0$]\label{thm:DPP} The optimal value function $V^{\ast}_{\tau}\in B_b(S)$ is the unique solution
	of the regularized Bellman equation given by
	\begin{equation}
	\begin{aligned}\label{eq:BellmanDPP}
		V^{\ast}_{\tau}(s) &= \max_{m \in \mathcal P(A)}\left[\int_{A}\left(r(s,a)-\tau\ln\frac{dm}{d\mu}(a)\right.\right.\\
		&\qquad\quad  \left. \left.+\gamma\int_{S}V^{\ast}_{\tau}(s')P(ds'|s,a)\right)m(da)\right]\,.
	\end{aligned}
\end{equation}
	Moreover, 
	$
	Q^*_{\tau}(s,a)=r(s,a)+\gamma\int_{S}V^*_{\tau}(s')P(ds'|s,a)
	$.
	
	If $\tau=0$, then there exists a measurable function  $f^{\ast}:S\rightarrow A$ called a selector such that $f^{\ast}(s)\in \operatorname{argmax}_{a\in A}Q^*_{0}(s,a)$ and the induced policy $\pi^* \in \Pi_{0}$ defined by $\pi^*(da|s)=\delta_{f^*(s)}(da)$ for all $s\in S$ satisfies  $V^*_{0}=V^{\pi^*}_{0}$.
	
	If $\tau>0$, then for all $s\in S$, 
	$$
	V^{\ast}_{\tau}(s)=\tau\ln\int_{A}\exp\left\{Q^{\ast}_{\tau}(s,a)/\tau\right\}\mu(da),
	$$
	and $\pi^*_{\tau} \in \Pi_{\tau}$ defined by
	$$
	\pi^*_{\tau}(da|s) = \exp\left((Q^{\ast}_{\tau}(s,a)-V^{\ast}_{\tau}(s))/\tau\right)\mu(da)
	$$
	is the unique policy satisfying  $V^*_{\tau}=V^{\pi^*_{\tau}}_{\tau}$.
\end{theorem}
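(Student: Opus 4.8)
The plan is to proceed through the regularized Bellman operator and the Banach fixed point theorem, handling the inner maximization over $m\in\mathcal{P}(A)$ explicitly. I would define $\mathcal{T}_\tau\colon B_b(S)\to B_b(S)$ by the right-hand side of \eqref{eq:BellmanDPP}, and for fixed $V\in B_b(S)$ set $q_V(s,a)=r(s,a)+\gamma\int_S V(s')P(ds'|s,a)$, which lies in $B_b(S\times A)$ with $|q_V|_\infty\le |r|_\infty+\gamma|V|_\infty$ because $r$ is bounded and $P$ is a stochastic kernel. The inner problem at each $s$ is then $\max_{m\in\mathcal{P}(A)}\bigl[\int_A q_V(s,a)\,m(da)-\tau\,\textnormal{KL}(m|\mu)\bigr]$. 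For $\tau>0$ I would invoke the Gibbs variational principle \cite{dupuis1997weak}[Prop.\ 1.4.2] (Donsker--Varadhan duality), which gives the closed form $\mathcal{T}_\tau V(s)=\tau\ln\int_A\exp(q_V(s,a)/\tau)\,\mu(da)$ together with the unique maximizer $m^\ast_s(da)\propto\exp(q_V(s,a)/\tau)\mu(da)$; since $\mu$ is finite and positive and $q_V$ is bounded, this log-partition function is bounded and measurable in $s$, so $\mathcal{T}_\tau V\in B_b(S)$. For $\tau=0$ the inner problem reduces to $\sup_{a\in A}q_V(s,a)$, and the compactness of $A$, the strong Feller property of $P(\cdot|\cdot,a)$, and the upper semicontinuity of $r(s,\cdot)$ ensure, via condition 3.3.3 of \cite{hernandez2012discrete}, that the supremum is attained by a measurable selector.

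I would then show that $\mathcal{T}_\tau$ is a $\gamma$-contraction in the supremum norm. For $\tau>0$ this follows from the fact that the log-sum-exp map $q\mapsto\tau\ln\int_A\exp(q/\tau)\,d\mu$ is $1$-Lipschitz in the supremum norm, combined with $|q_{V_1}-q_{V_2}|_\infty\le\gamma|V_1-V_2|_\infty$; for $\tau=0$ the same bound holds because $q\mapsto\sup_a q$ is also non-expansive. The Banach fixed point theorem on $(B_b(S),|\cdot|_\infty)$ then yields a unique $V^\ast_\tau\in B_b(S)$ solving \eqref{eq:BellmanDPP}.

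It remains to identify this fixed point with $\sup_{\pi\in\Pi_\tau}V^\pi_\tau$ and to construct the optimal policy. For $V^\ast_\tau\ge V^\pi_\tau$ I would introduce the policy evaluation operator $\mathcal{T}^\pi_\tau$ whose unique fixed point is $V^\pi_\tau$ (this is \eqref{eq:policy_Bellman}, well defined since the per-step cost is bounded for $\pi\in\Pi_\tau$), observe that $\mathcal{T}_\tau V\ge\mathcal{T}^\pi_\tau V$ pointwise by definition of the max, and iterate the monotone map $\mathcal{T}^\pi_\tau$ starting from $V^\ast_\tau=\mathcal{T}_\tau V^\ast_\tau\ge\mathcal{T}^\pi_\tau V^\ast_\tau$ to obtain a non-increasing sequence converging to $V^\pi_\tau$, whence $V^\ast_\tau\ge V^\pi_\tau$. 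For the reverse inequality I would exhibit the maximizing policy: for $\tau>0$ take $\pi^\ast_\tau(da|s)=\exp((Q^\ast_\tau(s,a)-V^\ast_\tau(s))/\tau)\mu(da)$ with $Q^\ast_\tau:=q_{V^\ast_\tau}$, which lies in $\Pi_\tau$ since $Q^\ast_\tau-V^\ast_\tau$ is bounded and measurable, and for $\tau=0$ take $\pi^\ast(da|s)=\delta_{f^\ast(s)}(da)$ with $f^\ast$ the selector above; in either case $\mathcal{T}^{\pi^\ast}_\tau V^\ast_\tau=\mathcal{T}_\tau V^\ast_\tau=V^\ast_\tau$, so $V^\ast_\tau=V^{\pi^\ast}_\tau$ by uniqueness of the fixed point of $\mathcal{T}^{\pi^\ast}_\tau$. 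The identity $Q^\ast_\tau(s,a)=r(s,a)+\gamma\int_S V^\ast_\tau(s')P(ds'|s,a)$ and the explicit formula for $V^\ast_\tau$ follow immediately. For uniqueness of the optimal policy when $\tau>0$, I would use that $m\mapsto-\textnormal{KL}(m|\mu)$ is strictly concave \cite{dupuis1997weak}[Sec.\ 1.4], so the inner maximizer $m^\ast_s$ is unique at every state; hence any $\pi$ with $V^\pi_\tau=V^\ast_\tau$ must coincide with $\pi^\ast_\tau$ at $\mu$-almost every action.

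The step I expect to be the main obstacle is the $\tau=0$ case, where the measurable selection theorem does the heavy lifting: unlike for $\tau>0$, the inner maximizer is not given in closed form and need not be unique, so the compactness of $A$, the strong Feller continuity of $P$, and the upper semicontinuity of $r$ must be combined carefully to guarantee both attainment of the supremum and measurability of the resulting selector, which is exactly what condition 3.3.3 of \cite{hernandez2012discrete} supplies.
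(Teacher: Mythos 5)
Your proposal is correct and takes essentially the same route as the paper: the paper gives no proof of its own but obtains the theorem by citing \cite{hernandez2012discrete}[Thm.\ 4.2.3] for $\tau=0$ and \cite{geist2019theory}[Thm.\ 1] (via \cite{dupuis1997weak}[Prop.\ 1.4.2]) for $\tau>0$, and your argument---the $\gamma$-contraction of the regularized Bellman operator, the Gibbs variational principle supplying the log-sum-exp form and the unique inner maximizer, and Condition 3.3.3 of \cite{hernandez2012discrete} supplying the measurable selector when $\tau=0$---is precisely the content of those cited results. In effect you have written out, correctly, the standard proof that the paper outsources to the references.
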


\subsection{Softmax Mean-Field Policy and the Entropy Regularized Objective}
\label{sec:param_MDP}
For arbitrarily given $k\in \bbN_0$, let  $\clA_k$ consist of jointly measurable functions $h: \mathbb{R}^d\times S\times A\rightarrow \mathbb{R}$ such that for all $s\in S$, $\mu$-almost-all $a\in A$, $h$ is $k$-times  differentiable in $\theta$, and satisfies
$$
|h|_{\clA_k}:=\max_{0\le j\le k}\underset{a\in A}{\operatorname{ess\, sup}}\sup_{s\in  S}\underset{\theta \in \bbR^d}{\operatorname{ess\, sup}}|\nabla^{j} h(\theta,s,a)|<\infty\,,
$$
where the essential supremum over $A$ is defined relative to the reference measure $\mu$, the essential supremum over $\bbR^d$ is defined relative to the Lebesgue measure $\lambda$, and here and henceforth $\nabla=\nabla_{\theta}$.
For  given $f\in \clA_0$, define  $\pi: \mathcal{P}(\mathbb{R}^d)\rightarrow  \mathcal{P}_{\mu}(A|S)$ by
$$
\pi_{\nu}(da|s)=\frac{\exp\left(\int_{\mathbb{R}^d}f(\theta,s,a)\nu(d\theta)\right)}{\int_{A}\exp\left(\int_{\mathbb{R}^d}f(\theta,s,a')\nu(d\theta)\right)\mu(da')}\mu(da)\,,
$$
which we refer to as a mean-field softmax policy.
It follows that $\ln(\frac{d\pi_{\nu}}{d\mu})\in B_b(S\times A)$ (see Lemma \ref{lem:bounds}), and hence $\pi_{\nu} \in \Pi_{\tau}$ for $\tau>0$.

If $S\times A$ is a compact subset of $\mathbb{R}^{d_S}\times \mathbb{R}^{d_A}$, then we may take $f$ to be of the form
\[
f(\theta, s, a) = f(\theta, x)
=\psi(\theta_0)\cdot g(\theta_1 x)\,,
\]
where  $x=(s_1,\ldots,s_{d_S}, a_1,\ldots,a_{d_A},1)^\top\in \mathbb{R}^{d_S+d_A+1}$, $\theta = (\theta_0,\theta_1) \in \mathbb{R}^{d'}\times \mathbb{R}^{d'\times (d_S+d_A+1)}$, $g:\mathbb{R}\rightarrow \mathbb{R}$ is an activation function such as a sigmoid or hyperbolic tangent applied component-wise, and $\psi:\mathbb{R}\rightarrow [-C, C]$, $C>0$, is a smooth rescaling function. 
We see that  $f\in \mathcal A_k$ for any $k\in \mathbb N_0$.
Here, we may also take the reference measure $\mu$ to be the Lebesgue measure.

We now introduce a  measurable positive potential $U:\bbR^d\rightarrow \bbR$ satisfying
$
\int_{\mathbb{R}^d}e^{-U(\theta)}d\theta =1.
$
The prototypical example one should have in mind is the quadratic potential $U(\theta)=\frac{d}{2}\ln(2\pi)+\frac{1}{2}|\theta|^2$. 
For given $\rho \in \mathcal{P}(S)$, $\tau\ge0$, and $\sigma\ge0$, we define the entropy regularized functional $J^{\tau,\sigma}(\rho): \mathcal{P}(\mathbb{R}^d)\rightarrow \mathbb{R}$ by 
\begin{align*}
	J^{\tau,\sigma}(\rho)(\nu)&=V^{\pi_\nu}_{\tau}(\rho)-\frac{\sigma^2}{2}\text{KL}(\nu|e^{-U})\,.
\end{align*}
Henceforth we fix $\rho\in \clP(S)$ and write $J^{\tau,\sigma}=J^{\tau,\sigma}(\rho)$, unless otherwise specified.
In the case $\sigma>0$, we maximize $J^{\tau,\sigma}$ over $\mathcal P_2(\mathbb R^d)$ and, without loss of generality, it suffices to maximize over 
\[
\mathcal{P}_2^{\textnormal{fe}}(\mathbb R^d) := \{ \nu \in \mathcal{P}_2^{\textnormal{ac}}(\mathbb R^d) : \textnormal{KL}(\nu|e^{-U})<\infty\}\,.
\]

\subsection{Statement of Main Results}

Our main results are Theorems~\ref{thm:optimality}, \ref{thm:apriori_grad_flow},~\ref{thm:ergodicity}, and \ref{thm:cts_dep_flow}, which establish  necessary conditions for optimality, the well-posedness of a gradient flow along which the objective increases, convergence of the gradient flow in a regularized regime, and upper bounds on $W_2$-distance between two gradient flows with different $\sigma,\tau,$ and initial conditions.
Before stating these, we highlight our core auxiliary results, namely Lemmas \ref{lem:func_deriv_pi} and \ref{lem:func_deriv_J} and Theorem \ref{thm:bnd_reg_Lion_deriv}, which  enable us to apply methods from the analysis of non-linear Fokker--Planck PDEs and McKean--Vlasov dynamics to the MDP problem described in Section \ref{sec:MDP}, and specifically the parameterized problem described in Section \ref{sec:param_MDP}.

\begin{lemma}[Functional derivative of $\pi$]\label{lem:func_deriv_pi}
	If $f\in \clA_0$, then the function $\pi: \mathcal{P}(\mathbb{R}^d)\rightarrow b\mathcal{K}_{\mu}(A|S)$ has a linear functional derivative $\frac{\delta \pi}{\delta \nu}:\mathcal{P}(\mathbb{R}^d)\times \mathbb{R}^d\rightarrow b\mathcal{K}_{\mu}(A|S)$  given by 
	\begin{equation}
	\begin{aligned}\label{eq:func_deriv_pi}
		\frac{\delta \pi}{\delta \nu}(\nu,\theta)&(da|s)=\left(f(\theta,s,a)\right.\\
		&\left. -\int_{A}f(\theta,s,a')\pi_{\nu}(da'|s)\right)\pi_{\nu}(da|s)\,.
	\end{aligned}
\end{equation}
\end{lemma}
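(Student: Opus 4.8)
The plan is to verify Definition~\ref{def:lin_func_der} directly, with $p=0$ and Banach space $V=b\clK_\mu(A|S)$: I would compute the one-sided Gateaux derivative of $\nu\mapsto\pi_\nu$ along the segment $\nu^\varepsilon:=\nu+\varepsilon(\nu'-\nu)$, show it agrees with the right-hand side of~\eqref{eq:func_deriv_pi} integrated against $\nu'-\nu$, and then confirm that the convergence holds in the kernel norm $|\cdot|_{b\clK(A|S)}=\sup_{s}|\cdot|_{\clM(A)}$ and that the candidate derivative is bounded and continuous. To this end, set $K:=|f|_{\clA_0}$ and $F_\nu(s,a):=\int_{\bbR^d}f(\theta,s,a)\,\nu(d\theta)$; boundedness of $f$ gives $|F_\nu|\le K$ uniformly in $(s,a,\nu)$, so by Lemma~\ref{lem:bounds} the density $g_\nu(s,a):=\frac{d\pi_\nu}{d\mu}(a|s)$ lies in $[e^{-2K}/\mu(A),\,e^{2K}/\mu(A)]$. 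Since $\nu^\varepsilon$ is again a probability measure and $F_{\nu^\varepsilon}=F_\nu+\varepsilon\,\delta F$ with $\delta F(s,a):=\int_{\bbR^d}f(\theta,s,a)(\nu'-\nu)(d\theta)$ satisfying $|\delta F|\le 2K$, the map $\varepsilon\mapsto g_{\nu^\varepsilon}(s,a)$ is smooth on $[0,1]$ for each fixed $(s,a)$, with all relevant quantities bounded above and below by constants depending only on $K$ and $\mu(A)$.

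Next I would differentiate by logarithmic/quotient differentiation. Writing $N_\varepsilon=e^{F_\nu+\varepsilon\delta F}$ and $Z_\varepsilon=\int_A N_\varepsilon\,\mu(da')$, one obtains the pointwise $\varepsilon$-derivative at $0$,
\[
D_\nu(s,a):=\Big(\delta F(s,a)-\int_A \delta F(s,a')\,\pi_\nu(da'|s)\Big)g_\nu(s,a)\,.
\]
Substituting the definition of $\delta F$ and interchanging the bounded $\theta$- and $a'$-integrals by Fubini shows that $D_\nu$ is exactly the $\mu$-density of $\int_{\bbR^d}\frac{\delta\pi}{\delta\nu}(\nu,\theta)(\nu'-\nu)(d\theta)$ for $\frac{\delta\pi}{\delta\nu}$ as in~\eqref{eq:func_deriv_pi}, which identifies the formula at the level of pointwise derivatives.

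The step requiring the most care—and the main obstacle—is upgrading this pointwise derivative to convergence in $b\clK_\mu(A|S)$, i.e.\ in $\sup_s$ of the $L^1(A,\mu)$-norm rather than merely pointwise in $(s,a)$. Here I would invoke Taylor's theorem with remainder for $\varepsilon\mapsto g_{\nu^\varepsilon}(s,a)$: since $F_\nu$, $\delta F$ and $Z_\varepsilon$ are bounded above and below by constants depending only on $K$ and $\mu(A)$, the second $\varepsilon$-derivative of $g_{\nu^\varepsilon}(s,a)$ is bounded by some $C=C(K,\mu(A))$ uniformly in $(s,a)$ and $\varepsilon\in[0,1]$. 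Hence
\[
\Big|\tfrac1\varepsilon\big(g_{\nu^\varepsilon}(s,a)-g_\nu(s,a)\big)-D_\nu(s,a)\Big|\le \tfrac12 C\varepsilon
\]
uniformly in $(s,a)$; integrating against $\mu$ and taking $\sup_s$ yields a bound of order $\varepsilon\,\mu(A)\to0$, which is precisely convergence in the required kernel norm.

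Finally I would check the two standing requirements on $\frac{\delta\pi}{\delta\nu}$. Boundedness is immediate: since $|f(\theta,s,a)-\int_A f(\theta,s,a')\pi_\nu(da'|s)|\le 2K$ and $\int_A g_\nu(s,a)\,\mu(da)=1$, one gets $\big|\frac{\delta\pi}{\delta\nu}(\nu,\theta)\big|_{b\clK(A|S)}\le 2K$ for all $(\nu,\theta)$. For continuity, I would note that $\nu\mapsto F_\nu$ is continuous uniformly in $(s,a)$ for the total-variation topology on $\clP(\bbR^d)$, because $|F_{\nu_1}-F_{\nu_2}|\le K\,|\nu_1-\nu_2|_{\clM(\bbR^d)}$, so $\nu\mapsto g_\nu$ and $\nu\mapsto\int_A f\,\pi_\nu$ inherit the same uniform continuity; continuity in $\theta$ then follows from the regularity of $f(\cdot,s,a)$ together with dominated convergence using the uniform density bounds, yielding joint continuity of $\frac{\delta\pi}{\delta\nu}$ into $b\clK_\mu(A|S)$.
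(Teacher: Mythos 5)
Your proof is correct, but it takes a mechanically different route from the paper's, so a comparison is worthwhile. Both arguments verify Definition~\ref{def:lin_func_der} directly along the segment $\nu^\varepsilon=\nu+\varepsilon(\nu'-\nu)$ and both reduce matters to uniform-in-$(s,a)$ control of the difference quotient of the densities, but the mechanisms differ. The paper first decomposes the difference of \emph{normalized} densities algebraically into differences of \emph{unnormalized} densities $\tfrac{d\tilde\pi_{\nu^\varepsilon}}{d\mu}-\tfrac{d\tilde\pi_{\nu}}{d\mu}$ and of the normalizing constants, then expands $e^{\varepsilon\,\delta F}-1$ as a power series with an explicit $O(\varepsilon)$ remainder (see \eqref{ineq:ratio_diff_pol_eps}), and finally invokes dominated convergence to send the ratio $\tilde\pi_{\nu}(A|s)/\tilde\pi_{\nu^\varepsilon}(A|s)\to 1$. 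You instead treat $\varepsilon\mapsto \tfrac{d\pi_{\nu^\varepsilon}}{d\mu}(a|s)$ as a smooth curve, differentiate the normalized density directly by the quotient rule, and exploit the uniform two-sided bounds on the densities to get a uniform bound on the second $\varepsilon$-derivative; Taylor's theorem with Lagrange remainder then yields the uniform $O(\varepsilon)$ error in one stroke, and integration against the finite measure $\mu$ gives convergence in $|\cdot|_{b\mathcal{K}(A|S)}$ without any separate decomposition or dominated-convergence step. Your version is arguably cleaner and gives an explicit rate for the full normalized quantity; the cost is that it does not produce the intermediate estimates that the paper's expansion generates as byproducts --- in particular \eqref{ineq:ratio_diff_pol_eps} and \eqref{eq:limit_policy} are reused verbatim in the proof of Lemma~\ref{lem:func_deriv_J} (for the Taylor expansion of $\ln\tfrac{d\pi_{\nu^\varepsilon}}{d\mu}-\ln\tfrac{d\pi_{\nu}}{d\mu}$ in the $I_4^\varepsilon$ term), so the paper's route front-loads work needed downstream, whereas following your route one would have to rederive analogous facts there. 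Two small points to make explicit in a full write-up: differentiation of $Z_\varepsilon$ under the integral sign (immediate from $|F_\nu+\varepsilon\,\delta F|\le 3K$ and $\mu(A)<\infty$), and the computation showing $\partial_\varepsilon^2 g_{\nu^\varepsilon}=g_{\nu^\varepsilon}h_\varepsilon^2+g_{\nu^\varepsilon}\partial_\varepsilon h_\varepsilon$ with $h_\varepsilon:=\delta F-\int_A\delta F\, g_{\nu^\varepsilon}\,d\mu$ bounded by $4K$ and $|\partial_\varepsilon h_\varepsilon|\le 8K^2$, which is what makes your constant $C=C(K,\mu(A))$ uniform. Your treatment of boundedness and of joint continuity of the candidate derivative matches the paper's (and shares its implicit assumption that $f$ is continuous in $\theta$, which $\clA_0$ does not formally require).
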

Lemma~\ref{lem:func_deriv_pi} is proved in Section~\ref{sec:proofof:lem:func_deriv_pi}. 

\begin{lemma}[Functional derivative of $J^{\tau,0}$]
	\label{lem:func_deriv_J}
	If $f\in \clA_1$, then
	the function $J^{\tau,0}: \mathcal{P}_1(\mathbb{R}^d)\rightarrow \mathbb{R}$ has a linear functional derivative $\frac{\delta J^{\tau,0}}{\delta \nu}: \mathcal{P}_1(\mathbb{R}^d)\times \mathbb{R}^d\rightarrow \mathbb{R}$ given by 
	\begin{equation}
	\begin{aligned}\label{eq:func_deriv_J}
		\frac{\delta J^{\tau,0}}{\delta \nu}&(\nu,\theta)=\frac{1}{1-\gamma}\int_{S}\int_{A}\left(Q^{\pi_{\nu}}_{\tau}(s,a)\right.\\
		&\left.-\tau\ln\frac{d\pi_{\nu}}{d\mu}(a|s)\right)\frac{\delta \pi}{\delta \nu}(\nu,\theta)(da|s)d^{\pi_{\nu}}_{\rho}(ds)\,.
	\end{aligned}
    \end{equation}
\end{lemma}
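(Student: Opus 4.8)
The plan is to exploit the compositional structure $\nu \mapsto \pi_\nu \mapsto V^{\pi_\nu}_\tau(\rho)$ and to combine a policy-gradient computation at the level of the policy $\pi$ with the chain rule and Lemma~\ref{lem:func_deriv_pi} at the level of $\nu$.

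First I would establish a policy-gradient identity: for $\pi$ in $\Pi_\tau$ and a direction $\dot\pi = \pi'-\pi$ (so that $\dot\pi(A|s)=0$ for every $s$), I compute the directional derivative $\frac{d}{d\varepsilon}V^{\pi+\varepsilon\dot\pi}_\tau(\rho)\big|_{\varepsilon=0}$ by differentiating the policy Bellman equation~\eqref{eq:policy_Bellman} in $\varepsilon$. Writing $\dot V(s):=\frac{d}{d\varepsilon}V^{\pi+\varepsilon\dot\pi}_\tau(s)\big|_{\varepsilon=0}$ and recalling $Q^\pi_\tau(s,a)=r(s,a)+\gamma\int_S V^\pi_\tau(s')P(ds'|s,a)$, the derivative splits into a term acting on $\dot\pi$ and a term acting on $\dot V$. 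The decisive simplification is that the explicit $\pi$-dependence of the entropy produces $-\tau\int_A \frac{d\dot\pi}{d\pi}(a|s)\,\pi(da|s)=-\tau\,\dot\pi(A|s)=0$, leaving the linear fixed-point equation
\begin{equation*}
\dot V(s)=h(s)+\gamma\int_S \dot V(s')P_\pi(ds'|s),\qquad h(s):=\int_A\Big(Q^\pi_\tau(s,a)-\tau\ln\tfrac{d\pi}{d\mu}(a|s)\Big)\dot\pi(da|s).
\end{equation*}
Since $P_\pi$ is a stochastic kernel and $\gamma<1$, inverting $I-\gamma S_{P_\pi}$ by its Neumann series and using $(1-\gamma)\sum_{n\ge0}\gamma^n P_\pi^n = d^\pi$ gives $\dot V(s)=\frac{1}{1-\gamma}\int_S h(s')\,d^\pi(ds'|s)$; integrating against $\rho$ yields the directional derivative $\frac{1}{1-\gamma}\int_S\int_A\big(Q^\pi_\tau(s,a)-\tau\ln\frac{d\pi}{d\mu}(a|s)\big)\dot\pi(da|s)\,d^\pi_\rho(ds)$.

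Second, I would apply the chain rule. By Lemma~\ref{lem:func_deriv_pi}, for $\nu^\varepsilon=\nu+\varepsilon(\nu'-\nu)$ the curve $\varepsilon\mapsto\pi_{\nu^\varepsilon}$ has velocity $\dot\pi=\int_{\mathbb R^d}\frac{\delta\pi}{\delta\nu}(\nu,\theta)(\nu'-\nu)(d\theta)$ at $\varepsilon=0$. Substituting this $\dot\pi$ into the directional derivative from the previous step and interchanging the order of integration (Fubini) produces exactly the claimed expression for $\frac{\delta J^{\tau,0}}{\delta\nu}(\nu,\theta)$. To conclude that this is a genuine linear functional derivative in the sense of Definition~\ref{def:lin_func_der}, I would check that the right-hand side is continuous and bounded in $(\nu,\theta)$, using the uniform bounds of Lemma~\ref{lem:bounds} on $\ln\frac{d\pi_\nu}{d\mu}$ and on $Q^{\pi_\nu}_\tau$, the estimate $|f|_{\clA_0}$ controlling $\frac{\delta\pi}{\delta\nu}$ through~\eqref{eq:func_deriv_pi}, and the fact that $d^{\pi_\nu}_\rho\in\mathcal P(S)$.

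The step I expect to be the main obstacle is making the policy-gradient identity rigorous rather than merely formal: justifying differentiation of $V^{\pi+\varepsilon\dot\pi}_\tau$ under the Neumann series uniformly in $\varepsilon$, ensuring $\pi+\varepsilon\dot\pi$ remains in $\Pi_\tau$ along the whole segment (so that $\ln\frac{d\pi}{d\mu}$ is bounded and the Radon--Nikodym derivative $\frac{d\dot\pi}{d\pi}$ is well defined), and establishing the continuity of the maps $\nu\mapsto Q^{\pi_\nu}_\tau$, $\nu\mapsto\ln\frac{d\pi_\nu}{d\mu}$ and $\nu\mapsto d^{\pi_\nu}_\rho$ needed to upgrade the one-sided directional derivative to the linear functional derivative of Definition~\ref{def:lin_func_der}. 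Convergence and continuity of the occupancy kernel $d^{\pi_\nu}$ in $b\mathcal K(S|S)$, together with the boundedness estimates, is where the bulk of the analytic work lies.
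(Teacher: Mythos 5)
Your proposal is correct in substance, but it is organized differently from the paper's proof. The paper never formalizes a policy-space derivative: it works directly with the difference quotient $F^{\varepsilon}(s)=\varepsilon^{-1}\left(V^{\pi_{\nu^{\varepsilon}}}_{\tau}(s)-V^{\pi_{\nu}}_{\tau}(s)\right)$ along the measure segment $\nu^{\varepsilon}=\nu+\varepsilon(\nu'-\nu)$, splits it via the Bellman equation \eqref{eq:policy_Bellman} into five terms $I_1^{\varepsilon},\dots,I_5^{\varepsilon}$, iterates the resulting fixed-point identity to bring in the occupancy kernel $d^{\pi_{\nu}}$, and then passes to the limit term by term using Taylor expansions of the softmax exponential and logarithm; your cancellation $-\tau\,\dot\pi(A|s)=0$ appears there as the vanishing of the $I_4^{\varepsilon}$ limit. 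Your route instead factors the computation: a policy gradient identity for $\pi\mapsto V^{\pi}_{\tau}(\rho)$ along straight segments in policy space, followed by composition with Lemma~\ref{lem:func_deriv_pi}. This is exactly the chain-rule interpretation the paper records immediately after the lemma statement (with $\frac{\delta V^{\pi}_{\tau}(\rho)}{\delta \pi}$ written out there), so your decomposition is consistent with the result, and its advantage is modularity: the policy gradient theorem becomes a standalone, reusable statement. What it costs is that the chain rule needs more than the directional derivative at the single point $\pi_{\nu}$: the curve $\varepsilon\mapsto\pi_{\nu^{\varepsilon}}$ is not a straight line in policy space, so you must establish the integral (fundamental-theorem-of-calculus) form of the policy-level derivative along the segments $\pi_{\nu}+\lambda(\pi_{\nu^{\varepsilon}}-\pi_{\nu})$ — which do remain in $\Pi_{\tau}$, being convex combinations of densities bounded above and away from zero — together with continuity of $Q^{\pi}_{\tau}$, $\ln\frac{d\pi}{d\mu}$, and $d^{\pi}_{\rho}$ in $\pi$ uniformly in $\lambda$ (Lemma~\ref{lem:lip_occ_kernel} supplies the occupancy-kernel estimate), and the absolute continuity $\dot\pi\ll\pi_{\nu}$ of the velocity (which holds because, by \eqref{eq:func_deriv_pi}, it has bounded density with respect to $\pi_{\nu}$). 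You flag precisely these points as the main obstacles, and they are all resolvable with the estimates available in the paper; your plan simply relocates the analytic work from the paper's term-by-term limits to the verification of a Hadamard-type differentiability in policy space.
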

Lemma~\ref{lem:func_deriv_J} is proved in Section~\ref{sec:proofof:lem:func_deriv_J}.
If $\tau=0$, then \eqref{eq:func_deriv_J} is  the policy gradient theorem in \cite{sutton2018reinforcement}. Noting that $J^{\tau,0}(\nu)= V^{\pi_{\nu}}_{\tau}(\rho)$ and understanding $V^{\cdot}_{\tau}(\rho): \clP(A|S)\rightarrow \bbR$ defined by $\pi \mapsto V^{\pi}_{\tau}$, \eqref{eq:func_deriv_J} can be interpreted as the chain rule
$$
\frac{\delta J^{\tau,0}}{\delta \nu}(\nu,\theta)=\int_{S\times A}\frac{\delta V^{\pi_{\nu}}_{\tau}(\rho)}{\delta \pi}(ds|a)\frac{\delta \pi}{\delta \nu}(\nu,\theta)(da|s)\,,
$$
where $\frac{\delta V^{\cdot}_{\tau}(\rho)}{\delta \pi}: b\clP(A|S)\rightarrow  b\clK(S|A)$ understood in the sense of Definition~\ref{def:lin_func_der_kernel} is given by
\begin{gather*}
\frac{\delta V^{\pi}_{\tau}(\rho)}{\delta \pi}(ds|a)\\
=\frac{1}{1-\gamma}\left(Q^{\pi}_{\tau}(s,a)-\tau\ln \frac{d\pi}{d\mu}(a|s)\right)d^{\pi}_{\rho}(ds)\,.
\end{gather*}

\begin{theorem}[Boundedness and regularity of $\frac{\delta J^{\tau,0}}{\delta \nu}$]
	\label{thm:bnd_reg_Lion_deriv}
	There are constants 
	$
	C_k=C(\gamma,|r|_{B_b(S\times A)},\tau,\mu(A),|f|_{\clA_k}), \; k\in \bbN\,,
	$
	$
	L=L(\gamma,|r|_{B_b(S\times A)},\tau,\mu(A),|f|_{\clA_1})\, ,
	$
	and $D=D(\gamma,\mu(A),|f|_{\clA_1})$
	such that for all $\tau,\tau'\ge 0$, $\theta\in \mathbb{R}^d$, $\nu,\nu'\in \mathcal{P}_1(\mathbb{R}^d)$, and $k\in \bbN$,
	\begin{gather}
		\left|\nabla^k \frac{\delta J^{\tau,0}}{\delta \nu}(\nu,\theta)\right|\le C_k\,,  \label{ineq:bound_J0}\\
		 |J^{\tau,0}(\nu')-J^{\tau,0}(\nu)| \le  C_1 W_1(\nu',\nu)\,,\notag\\
		\left|\nabla \frac{\delta J^{\tau,0}}{\delta \nu}(\nu',\theta)-\nabla \frac{\delta J^{\tau,0}}{\delta \nu}(\nu,\theta)\right| \le L W_1(\nu',\nu)\,,\notag\\
		\textrm{and} \quad \left|\nabla \frac{\delta J^{\tau',0}}{\delta \nu}(\nu,\theta)-\nabla \frac{\delta J^{\tau,0}}{\delta \nu}(\nu,\theta)\right| \le D|\tau'-\tau|\,.\notag
	\end{gather}
\end{theorem}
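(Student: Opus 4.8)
The plan is to work directly from the explicit formula for $\frac{\delta J^{\tau,0}}{\delta \nu}$ in Lemma~\ref{lem:func_deriv_J} together with the formula for $\frac{\delta \pi}{\delta \nu}$ in Lemma~\ref{lem:func_deriv_pi}, and to propagate bounds and Lipschitz estimates through each $\nu$- and $\tau$-dependent factor. The only place $\theta$ enters the integrand is through $\frac{\delta\pi}{\delta\nu}(\nu,\theta)(da|s)=\left(f(\theta,s,a)-\int_A f(\theta,s,a')\pi_\nu(da'|s)\right)\pi_\nu(da|s)$. Since $f\in\clA_k$, each derivative $\nabla^{j} f$ with $0\le j\le k$ is bounded by $|f|_{\clA_k}$, so differentiating under the integral sign shows $\nabla^{k}\frac{\delta\pi}{\delta\nu}(\nu,\theta)(\cdot|s)$ is a kernel whose $b\clK(A|S)$-norm is controlled by $|f|_{\clA_k}$. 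Combining this with the uniform bounds on $Q^{\pi_\nu}_\tau$ and $\ln\frac{d\pi_\nu}{d\mu}$ from Lemma~\ref{lem:bounds}, and with $d^{\pi_\nu}_\rho$ being a probability measure, yields \eqref{ineq:bound_J0}. The Lipschitz bound $|J^{\tau,0}(\nu')-J^{\tau,0}(\nu)|\le C_1 W_1(\nu',\nu)$ then follows from the fundamental-theorem-of-calculus representation \eqref{eq:lin_der_diff_form}: the $k=1$ case of \eqref{ineq:bound_J0} says $\theta\mapsto\frac{\delta J^{\tau,0}}{\delta\nu}(\nu'',\theta)$ is $C_1$-Lipschitz uniformly in $\nu''$, so Kantorovich--Rubinstein duality \eqref{eq:KRW1} bounds each inner integral by $C_1 W_1(\nu',\nu)$, and integrating over $\varepsilon\in[0,1]$ preserves the bound.

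The heart of the proof is the third estimate. I would first establish that $\nu\mapsto\pi_\nu$ is $W_1$-Lipschitz: writing $F_\nu(s,a)=\int_{\bbR^d} f(\theta,s,a)\nu(d\theta)$, the bound $|f|_{\clA_1}$ on $\nabla_\theta f$ gives $|F_{\nu'}(s,a)-F_\nu(s,a)|\le|f|_{\clA_1}W_1(\nu',\nu)$ through \eqref{eq:KRW1}, and since $\pi_\nu$ and $\ln\frac{d\pi_\nu}{d\mu}$ depend on $F_\nu$ only through the softmax map, which is Lipschitz in its logits, both $|\pi_{\nu'}-\pi_\nu|_{b\clK(A|S)}$ and $\left|\ln\frac{d\pi_{\nu'}}{d\mu}-\ln\frac{d\pi_\nu}{d\mu}\right|_{B_b(S\times A)}$ are bounded by a constant times $W_1(\nu',\nu)$. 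Next I would propagate this to the occupancy kernel and the action-value function: since $P_\pi$ is linear in $\pi$ and $d^\pi=(1-\gamma)\sum_n\gamma^n P_\pi^n$ converges in $b\clK(S|S)$ geometrically, a Neumann-series perturbation argument gives $|d^{\pi_{\nu'}}_\rho-d^{\pi_\nu}_\rho|_{\clM(S)}\lesssim W_1(\nu',\nu)$, and the occupancy representation \eqref{eq:value_function_occ} of $V^\pi_\tau$ with the Bellman relation \eqref{def:Qpi} gives $|Q^{\pi_{\nu'}}_\tau-Q^{\pi_\nu}_\tau|_{B_b(S\times A)}\lesssim W_1(\nu',\nu)$. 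Finally, because $\nabla_\theta$ acts only on $\frac{\delta\pi}{\delta\nu}$, I would expand $\nabla\frac{\delta J^{\tau,0}}{\delta\nu}(\nu',\theta)-\nabla\frac{\delta J^{\tau,0}}{\delta\nu}(\nu,\theta)$ by a telescoping product-rule decomposition over the $\nu$-dependent factors (the bracket $Q^{\pi_\nu}_\tau-\tau\ln\frac{d\pi_\nu}{d\mu}$, the kernel $\nabla_\theta\frac{\delta\pi}{\delta\nu}$, and $d^{\pi_\nu}_\rho$), bounding each term by the uniform estimates from \eqref{ineq:bound_J0} on the unchanged factors and the Lipschitz estimate just established for the changed one, and collecting the constants into $L$.

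For the $\tau$-dependence the key simplification is that neither $\pi_\nu$, nor $\frac{\delta\pi}{\delta\nu}$, nor $d^{\pi_\nu}_\rho$ depends on $\tau$; only the scalar bracket does. From \eqref{eq:value_function_occ}, $V^{\pi_\nu}_{\tau'}-V^{\pi_\nu}_\tau=-\frac{\tau'-\tau}{1-\gamma}\int_S\int_A\ln\frac{d\pi_\nu}{d\mu}\,\pi_\nu\,d^{\pi_\nu}$, so by \eqref{def:Qpi} we get $|Q^{\pi_\nu}_{\tau'}-Q^{\pi_\nu}_\tau|_{B_b(S\times A)}\le\frac{\gamma}{1-\gamma}\left|\ln\frac{d\pi_\nu}{d\mu}\right|_{B_b(S\times A)}|\tau'-\tau|$, and the explicit $(\tau'-\tau)\ln\frac{d\pi_\nu}{d\mu}$ term is controlled the same way. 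Since $\nabla_\theta\frac{\delta\pi}{\delta\nu}$ is uniformly bounded through $|f|_{\clA_1}$ and $\left|\ln\frac{d\pi_\nu}{d\mu}\right|_{B_b(S\times A)}$ is controlled by $|f|_{\clA_1}$ and $\mu(A)$ via Lemma~\ref{lem:bounds}, collecting these yields $D=D(\gamma,\mu(A),|f|_{\clA_1})$; note that the reward contributions cancel in the difference, which is why $D$ depends neither on $|r|_{B_b(S\times A)}$ nor on $\tau$.

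I expect the main obstacle to be the third estimate, and within it the propagation of the $W_1$-Lipschitz continuity of $\pi_\nu$ through the occupancy kernel $d^{\pi_\nu}_\rho$ and the action-value $Q^{\pi_\nu}_\tau$: these require perturbing a geometric (Neumann) series of kernel compositions while keeping the constants uniform in $s$, $\theta$, and $\tau$, and the resulting book-keeping for the telescoping product is the most delicate part of the argument.
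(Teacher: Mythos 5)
Your proposal is correct and follows essentially the same route as the paper's proof: direct bounding of the integrand (via Lemma~\ref{lem:bounds} and $|f|_{\clA_k}$) for \eqref{ineq:bound_J0}, the representation \eqref{eq:lin_der_diff_form} plus Kantorovich--Rubinstein duality for the second estimate, a telescoping decomposition over the $\nu$-dependent factors (occupancy measure, regularized action-value bracket, policy-derivative kernel) with $W_1$-Lipschitz estimates for each factor for the third, and isolation of the $\tau$-dependence in the scalar bracket for the fourth. The only cosmetic difference is that the paper obtains the $W_1$-Lipschitzness of $Q^{\pi_\nu}_{\tau}$ in $\nu$ by reusing the already-established estimate on $J^{\tau,0}(\delta_s)$ (i.e., $V^{\pi_\nu}_\tau(s)=J^{\tau,0}(\delta_s)(\nu)$) rather than perturbing the occupancy representation directly, and it derives the Lipschitzness of $\pi_\nu$ through the functional derivative of Lemma~\ref{lem:func_deriv_pi} rather than through the softmax-of-logits map.
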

Theorem~\ref{thm:bnd_reg_Lion_deriv} is proved in Section~\ref{sec:proof_of:thm:bnd_reg_Lion_deriv}. 
As a result of \eqref{ineq:bound_J0}, we note $J^{\tau,\sigma}(\nu)=J^{\tau, \sigma}(\nu')$ if $\nu=\nu'$, $\lambda$-a.e.. We will always work under the following assumption on $U$.

\begin{assumption}[Growth of $\nabla U$]\label{asm:U_lin_growth}
	There is a constant $C_U>0$ such that for all $\theta\in \bbR^d$, $|\nabla U(\theta)|\le C_U(1+|\theta|)$.
\end{assumption}

\begin{theorem}[Necessary condition for optimality]
	\label{thm:optimality}
	Let $f\in \clA_1$ and Assumption \ref{asm:U_lin_growth} hold. If
	$\nu\in \mathcal{P}_1(\mathbb R^d)$ is a local maximum of $J^{\tau,\sigma}$, then
	$$
	\theta \mapsto \frac{\delta J^{\tau.0}}{\delta \nu}(\nu,\theta)-\frac{\sigma^2}{2}  U(\theta)-\frac{\sigma^2}{2}\ln\nu(\theta)
	$$
	is constant $\nu$-a.e.. Moreover, if $\sigma>0$, then $\nu$ is equivalent to the Lebesgue measure $\lambda$, and for $\lambda$-a.a.\ $\theta\in \bbR^d$,
	\begin{equation}\label{eq:max_exp_form}
		\nu(\theta)=\mathcal{Z}^{-1}e^{\frac{2}{\sigma^2}\frac{\delta J^{\tau.0}}{\delta \nu}(\nu,\theta)-U(\theta)}\,,
	\end{equation}
$$
 \quad \textnormal{where} \quad 
\mathcal{Z}:=\int_{\bbR^d} e^{\frac{2}{\sigma^2}\frac{\delta J^{\tau.0}}{\delta \nu}(\nu,\theta')-U(\theta')}\,d\theta'\,.
$$
\end{theorem}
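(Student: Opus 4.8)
The plan is to run a first-order variational argument, exploiting the local maximality of $\nu$ against two different families of admissible perturbations. Throughout I write $h(\theta):=\frac{\delta J^{\tau,0}}{\delta \nu}(\nu,\theta)-\frac{\sigma^2}{2}U(\theta)-\frac{\sigma^2}{2}\ln\nu(\theta)$, so the first assertion is that $h$ is constant $\nu$-a.e. When $\sigma>0$, local maximality forces $J^{\tau,\sigma}(\nu)>-\infty$, hence $\textnormal{KL}(\nu|e^{-U})<\infty$; thus $\nu\in\mathcal{P}_2^{\textnormal{fe}}(\bbR^d)$ is absolutely continuous with finite entropy, and using Assumption~\ref{asm:U_lin_growth} together with finiteness of the second moment one gets $h\in L^1(\nu)$, since $\frac{\delta J^{\tau,0}}{\delta \nu}(\nu,\cdot)$ is bounded (Lemma~\ref{lem:func_deriv_J}) while $\int|\ln\nu|\,d\nu$ and $\int U\,d\nu$ are finite. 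When $\sigma=0$ the $\ln\nu$ term drops and the argument below simplifies, requiring no absolute continuity of $\nu$.

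To obtain the constancy of $h$ I would perturb multiplicatively. Fix a bounded measurable $\phi$ with $\int\phi\,d\nu=0$ and set $\nu_\epsilon(d\theta):=(1+\epsilon\phi(\theta))\nu(d\theta)$, which for $|\epsilon|<\|\phi\|_\infty^{-1}$ is a probability measure converging to $\nu$ in $W_2$ (it is dominated by $(1+|\epsilon|\|\phi\|_\infty)\nu$, giving uniform integrability of the second moments) and remaining in the finite-entropy class. Since $\nu_\epsilon-\nu=\epsilon\phi\nu$ has zero total mass, the integral form \eqref{eq:lin_der_diff_form} of the functional derivative, the boundedness and continuity of $\frac{\delta J^{\tau,0}}{\delta \nu}$ (Lemma~\ref{lem:func_deriv_J}, Theorem~\ref{thm:bnd_reg_Lion_deriv}) and dominated convergence yield $\frac{d}{d\epsilon}\big|_{\epsilon=0}J^{\tau,0}(\nu_\epsilon)=\int\frac{\delta J^{\tau,0}}{\delta \nu}(\nu,\theta)\phi(\theta)\,\nu(d\theta)$. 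Differentiating the entropy density $\nu(1+\epsilon\phi)\ln(\nu(1+\epsilon\phi)e^{U})$ in $\epsilon$, and justifying the interchange with a mean-value/dominated-convergence majorant of the form $|\phi|\nu(|\ln\nu|+C)$, gives $\frac{d}{d\epsilon}\big|_{\epsilon=0}\textnormal{KL}(\nu_\epsilon|e^{-U})=\int\phi(\ln\nu+U)\,d\nu$, the spurious $+1$ coming from $\frac{d}{d\epsilon}\nu(1+\epsilon\phi)$ vanishing because $\int\phi\,d\nu=0$. As $\pm\epsilon$ are both admissible and $\nu$ is a local maximum, the derivative of $J^{\tau,\sigma}(\nu_\epsilon)$ at $\epsilon=0$ vanishes, i.e.\ $\int\phi\,h\,d\nu=0$ for every bounded mean-zero $\phi$. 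Testing with $\phi=\mathbf{1}_E-\nu(E)$ over Borel sets $E$ and using $h\in L^1(\nu)$ shows $h$ equals its $\nu$-mean $\nu$-a.e., which is the first claim.

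For the ``moreover'' part with $\sigma>0$ I would first show $\nu$ is equivalent to $\lambda$. Suppose not: there is a Borel set $B$ with $\lambda(B)>0$ but $\nu(B)=0$. Perturb linearly toward the full-support prior, $\nu_\epsilon:=(1-\epsilon)\nu+\epsilon e^{-U}$. The $J^{\tau,0}$ increment is $O(\epsilon)$ by the uniform bound \eqref{ineq:bound_J0}, whereas on $B$ one has $\nu_\epsilon=\epsilon e^{-U}$, so the entropy contributes $\epsilon\ln\epsilon\int_B e^{-U}\,d\theta+O(\epsilon)$; since $\epsilon\ln\epsilon/\epsilon\to-\infty$, the resulting decrease of $\textnormal{KL}$ dominates, giving $J^{\tau,\sigma}(\nu_\epsilon)-J^{\tau,\sigma}(\nu)\ge\frac{\sigma^2}{2}|\epsilon\ln\epsilon|\int_B e^{-U}\,d\theta-C\epsilon>0$ for small $\epsilon>0$, contradicting local maximality. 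Hence $\nu\sim\lambda$, so the $\nu$-a.e.\ identity $h=\textnormal{const}$ holds $\lambda$-a.e.; solving $\frac{\sigma^2}{2}\ln\nu(\theta)=\frac{\delta J^{\tau,0}}{\delta \nu}(\nu,\theta)-\frac{\sigma^2}{2}U(\theta)-c$ for $\nu$ and fixing the constant by $\int\nu\,d\theta=1$ produces the exponential form \eqref{eq:max_exp_form} with the stated normalizer $\mathcal{Z}$.

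The main obstacle is the rigorous treatment of the entropy term, whose linear functional derivative is unbounded (it involves $\ln\nu$) and therefore lies outside the clean framework of Definition~\ref{def:lin_func_der}, precisely the point flagged in the heuristic derivation. I expect the two delicate steps to be: (i) the dominated-convergence justification for differentiating the entropy under the integral along the multiplicative perturbation, which relies on the membership $\nu\in\mathcal{P}_2^{\textnormal{fe}}(\bbR^d)$ to furnish an integrable majorant; and (ii) the $\epsilon\ln\epsilon$ asymptotics in the full-support step, where the blow-up of the one-sided derivative of the entropy is exactly what rules out a local maximum charging a $\lambda$-null complement.
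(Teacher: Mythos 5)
Your proof is correct, but it takes a genuinely different route from the paper's for the key first-order step. The paper perturbs along convex combinations $\nu^\varepsilon=\nu+\varepsilon(\nu'-\nu)$ with arbitrary $\nu'\in\mathcal{P}_1^{\textnormal{fe}}(\bbR^d)$; since only $\varepsilon\ge 0$ is admissible this yields a one-sided variational \emph{inequality}, the entropy increment is handled by convexity of $x\ln x$ plus a reverse-Fatou argument, and constancy of $h$ is then extracted via the fundamental theorem of the calculus of variations (Lemma~\ref{lem:ft_var}). You instead use multiplicative perturbations $\nu_\epsilon=(1+\epsilon\phi)\nu$ with bounded mean-zero $\phi$, which are two-sided, so you get an exact first-order \emph{equality} $\int\phi\,h\,d\nu=0$ and conclude by testing with $\phi=\mathbf{1}_E-\nu(E)$, bypassing Lemma~\ref{lem:ft_var} entirely; moreover, along your perturbation the KL difference quotient splits exactly as $\int\epsilon^{-1}(1+\epsilon\phi)\ln(1+\epsilon\phi)\,d\nu+\int\phi\ln\frac{\nu}{e^{-U}}\,d\nu$, so no Fatou-type argument is needed — a genuinely cleaner differentiation of the entropy. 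The trade-off is that your family only probes directions absolutely continuous with respect to $\nu$, which is why you need a separate argument for equivalence with $\lambda$; there you perturb toward the prior, $\nu_\epsilon=(1-\epsilon)\nu+\epsilon e^{-U}$, and use the $\epsilon\ln\epsilon$ blow-up directly at the level of objective values, whereas the paper plugs $\nu'=e^{-U}$ into its already-established inequality and contradicts finiteness. Both full-support arguments descend from the same idea (and from the cited result of Hu et al.); yours needs the small additional observation (which you assert as ``$O(\epsilon)$'' without proof) that on the complement of the $\nu$-null set the KL contribution is controlled, e.g.\ by convexity of $x\ln x$: $\int_{B^c}\nu_\epsilon\ln(\nu_\epsilon e^{U})\,d\theta\le(1-\epsilon)\,\textnormal{KL}(\nu|e^{-U})$.

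One inaccuracy to fix: local maximality with $\sigma>0$ gives $\textnormal{KL}(\nu|e^{-U})<\infty$, hence absolute continuity, but it does \emph{not} give $\nu\in\mathcal{P}_2(\bbR^d)$ — Assumption~\ref{asm:U_lin_growth} only bounds $U$ from above by a quadratic, so finite relative entropy does not force a finite second moment, and the theorem's hypothesis is only $\nu\in\mathcal{P}_1(\bbR^d)$. Fortunately your argument never needs second moments or the separate finiteness of $\int U\,d\nu$ and $\int|\ln\nu|\,d\nu$: what it needs is $\ln\frac{\nu}{e^{-U}}\in L^1(\nu)$, and this follows from finite KL alone, since the negative part satisfies $\int\bigl(\ln\frac{\nu}{e^{-U}}\bigr)^-\,d\nu=\int\frac{\nu}{e^{-U}}\bigl(\ln\frac{\nu}{e^{-U}}\bigr)^-e^{-U}\,d\theta\le\sup_{x\ge0}x(\ln x)^-<\infty$ (the same device as in Lemma~\ref{lem:relative_entropy}). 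Treat the KL integrand as the single quantity $\ln\frac{\nu}{e^{-U}}$ rather than as $\ln\nu+U$ and the integrability issues disappear.
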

Theorem~\ref{thm:optimality} is proved in Section~\ref{sec:proofof:thm:optimality}. Assume $f\in \clA_1$ and
for given $\nu\in  \clP_1(\mathbb{R}^d)$, define the linear operator $L_{\nu}$ on $C_c^\infty(\mathbb{R}^d)$ by 
\begin{equation*}
L_{\nu} \phi =\frac{\sigma^2}{2}\Delta \phi  +\left(\nabla \frac{\delta J^{\tau,0}}{\delta \nu}(\nu) - \frac{\sigma^2}{2} \nabla U\right)\cdot \nabla \phi\,. 
\end{equation*}
We denote the adjoint of $L_{\nu}$ by $L_{\nu}^{\ast}$, which acts on probability measures $\clP(\bbR^d)$.

\begin{corollary}[Local maximum and elliptic PDE]\label{cor:local_max_PDE}
Let $f\in \clA_1$ and Assumption \ref{asm:U_lin_growth} hold. If
$\nu\in \mathcal{P}_1(\mathbb R^d)$ is a local maximum of $J^{\tau,\sigma}$ and has full support, then $\nu$ is a measure-valued solution of  
\begin{equation}\label{eq:stationary_eq}
L_{\nu}^* \nu = 0\,.
\end{equation}
\end{corollary}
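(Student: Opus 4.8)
The plan is to unfold the definition of a measure-valued solution. Since $L_\nu$ is only defined on $C_c^\infty(\bbR^d)$, the identity $L_\nu^*\nu=0$ means precisely that
\[
\int_{\bbR^d} L_\nu\phi\,d\nu=\int_{\bbR^d}\left[\frac{\sigma^2}{2}\Delta\phi+\left(\nabla\frac{\delta J^{\tau,0}}{\delta\nu}(\nu)-\frac{\sigma^2}{2}\nabla U\right)\cdot\nabla\phi\right]\nu\,d\theta=0
\]
for every $\phi\in C_c^\infty(\bbR^d)$. The single essential input is Theorem~\ref{thm:optimality}: because $\nu$ is a local maximum of $J^{\tau,\sigma}$ with full support, the map $\theta\mapsto\frac{\delta J^{\tau,0}}{\delta\nu}(\nu,\theta)-\frac{\sigma^2}{2}U(\theta)-\frac{\sigma^2}{2}\ln\nu(\theta)$ is constant $\nu$-a.e., and since $\sigma>0$ forces $\nu$ to be equivalent to $\lambda$, this constancy in fact holds $\lambda$-a.e.

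The second step is to differentiate this constancy relation and read off a ``score identity''. Here I must first secure enough regularity: by Theorem~\ref{thm:bnd_reg_Lion_deriv} (with $f\in\clA_1$) the map $\theta\mapsto\frac{\delta J^{\tau,0}}{\delta\nu}(\nu,\theta)$ is $C^1$ with bounded gradient, while Assumption~\ref{asm:U_lin_growth} gives that $U$ is weakly differentiable with $|\nabla U|$ of linear growth. Hence $\ln\nu$ is weakly differentiable, $\nu\in W^{1,1}_{\mathrm{loc}}(\bbR^d)$, and differentiating the ($\lambda$-a.e.) constant expression yields, $\lambda$-a.e.,
\[
\frac{\sigma^2}{2}\nabla\ln\nu=\nabla\frac{\delta J^{\tau,0}}{\delta\nu}(\nu)-\frac{\sigma^2}{2}\nabla U,\qquad\text{i.e.}\qquad \frac{\sigma^2}{2}\nabla\nu=\left(\nabla\frac{\delta J^{\tau,0}}{\delta\nu}(\nu)-\frac{\sigma^2}{2}\nabla U\right)\nu.
\]
Writing $b:=\nabla\frac{\delta J^{\tau,0}}{\delta\nu}(\nu)-\frac{\sigma^2}{2}\nabla U$ for the drift appearing in $L_\nu$, this says exactly that $\frac{\sigma^2}{2}\nabla\nu=b\,\nu$.

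The final step is to test against $\phi\in C_c^\infty(\bbR^d)$ and integrate by parts on the diffusion term. Since $\phi$ has compact support and $\nu\in W^{1,1}_{\mathrm{loc}}$, the boundary terms vanish and $\int\frac{\sigma^2}{2}\Delta\phi\,\nu\,d\theta=-\int\frac{\sigma^2}{2}\nabla\phi\cdot\nabla\nu\,d\theta=-\int b\cdot\nabla\phi\,\nu\,d\theta$, using the score identity in the last equality; adding the drift term $\int b\cdot\nabla\phi\,\nu\,d\theta$ from $L_\nu$ then gives $\int L_\nu\phi\,d\nu=0$, which is the claim. I expect the only genuine obstacle to be the regularity/integration-by-parts bookkeeping of the second and third steps—namely justifying that $\nu\in W^{1,1}_{\mathrm{loc}}$ (so that the score identity and the integration by parts are licit) and that all integrands are locally integrable on $\operatorname{supp}\phi$, which follows from boundedness of $\nabla\frac{\delta J^{\tau,0}}{\delta\nu}(\nu)$, local boundedness of $\nabla U$, and local boundedness of $\nu$ via the exponential form \eqref{eq:max_exp_form}. (The degenerate case $\sigma=0$ would instead use that $\frac{\delta J^{\tau,0}}{\delta\nu}(\nu,\cdot)$, being $C^1$ and constant on the dense full support, has vanishing gradient, so $\int b\cdot\nabla\phi\,d\nu=0$ trivially.)
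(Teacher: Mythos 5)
Your proposal is correct and follows essentially the same route as the paper's proof: invoke Theorem~\ref{thm:optimality} to get $\nu$-a.e.\ (hence, for $\sigma>0$, $\lambda$-a.e.) constancy of $\theta \mapsto \frac{\delta J^{\tau,0}}{\delta \nu}(\nu,\theta)-\frac{\sigma^2}{2}U(\theta)-\frac{\sigma^2}{2}\ln\nu(\theta)$, differentiate to obtain $\frac{\sigma^2}{2}\nabla\ln\nu = \nabla\frac{\delta J^{\tau,0}}{\delta\nu}(\nu)-\frac{\sigma^2}{2}\nabla U$, and conclude via the divergence theorem against test functions $\phi\in C_c^\infty(\bbR^d)$. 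Your added bookkeeping ($\nu\in W^{1,1}_{\mathrm{loc}}$ via \eqref{eq:max_exp_form}) and your explicit treatment of the $\sigma=0$ case (continuity plus full support upgrading $\nu$-a.e.\ constancy to everywhere-constancy) only make explicit what the paper leaves terse.
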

Corollary~\ref{cor:local_max_PDE} is proved in Section~\ref{sec:proofof:thm:optimality}. We will also require the following additional assumption on $U$.
\begin{assumption}[Lipschitzness of $\nabla U$]\label{asm:GradU_Lip}
	There exists a constant $L_U>0$ such that for all $\theta,\theta'\in \bbR^d$, $|\nabla U(\theta)-\nabla U(\theta')|\le L_U|\theta-\theta'|$.
\end{assumption}

The following theorem establishes precise conditions for the well-posedness of \eqref{eq:gf_intro} and for the objective function to be increasing along the flow. The formula \eqref{eq:objective_along_flow}, even in $\sigma=0$ case, is the first of its kind in the literature. In order to establish \eqref{eq:objective_along_flow} for non-smooth and non-compactly-supported initial conditions $\nu_0$, we have extended the argument in \cite{bogachev2016distances}. 

\begin{theorem}[Gradient flow]\label{thm:apriori_grad_flow}
	Let $f\in \clA_1$ and Assumption \ref{asm:U_lin_growth} hold. If $\nu_0\in\clP_p(\bbR^d)$ for some $p\in \bbN$, then there exists a measure-valued solution $\nu \in C(\bbR_+;\clP_p(\bbR^d))$ of
	\begin{equation}\label{eq:gradient_flow_cauchy}
			\partial_t \nu_t = L_{\nu_t}^*\nu_t\,, \quad 	\nu|_{t=0}=\nu_0\,.
	\end{equation}
	If  $\sigma = 0$ or  $\nu_0\in \clP_2^{\textnormal{fe}}(\bbR^d)$, then 
	\begin{equation}\label{eq:objective_along_flow}
	\begin{aligned}
		&J^{\tau,\sigma}(\nu_t)= J^{\tau,\sigma}(\nu_0)\\
		& +\int_0^t\int_{\mathbb{R}^d}\left|\nabla \frac{\delta J^{\tau,0}}{\delta \nu}(\nu_s,\theta) -\frac{\sigma^2}{2}\nabla \ln \frac{\nu_s(\theta)}{e^{-U(\theta)}}\right|^2\nu_{s}(d\theta) \,ds\,.
	\end{aligned}
	\end{equation}
	Moreover, if  both $f\in \clA_2$ and Assumption \ref{asm:GradU_Lip} holds or both $\sigma>0$ and $p\ge 4$ holds, then the solution of~\eqref{eq:gradient_flow_Cauchy_proof} is unique.
\end{theorem}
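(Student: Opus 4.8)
The plan is to treat the three assertions—existence of a measure-valued solution in $C(\bbR_+;\clP_p(\bbR^d))$, the energy identity \eqref{eq:objective_along_flow}, and uniqueness—separately, since they require genuinely different tools. For existence I would read \eqref{eq:gradient_flow_cauchy} as the nonlinear Fokker--Planck--Kolmogorov equation with drift $b(\nu,\theta)=\nabla\frac{\delta J^{\tau,0}}{\delta \nu}(\nu,\theta)-\frac{\sigma^2}{2}\nabla U(\theta)$ and diffusion $\frac{\sigma^2}{2}$, and construct its solution through the associated McKean--Vlasov dynamics (a deterministic characteristic flow when $\sigma=0$). Fix $T>0$ and a candidate curve $m\in C([0,T];\clP_p(\bbR^d))$; freezing the measure slot gives a time-dependent drift $b_t(\theta)=b(m_t,\theta)$ that is bounded plus linearly growing in $\theta$ by \eqref{ineq:bound_J0} and Assumption~\ref{asm:U_lin_growth}, and $W_1$-Lipschitz in the measure by Theorem~\ref{thm:bnd_reg_Lion_deriv}. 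Solving the resulting SDE/ODE from initial law $\nu_0$ and taking its marginal defines a map $\Phi(m)$; a Gronwall estimate on $\bbE|\theta_t|^p$ using the linear growth of $b$ keeps moments bounded on $[0,T]$, so $\Phi$ maps a closed ball of $C([0,T];\clP_p)$ into itself, and the $W_1$-Lipschitz dependence on the measure makes it a contraction on a short interval. Iterating and invoking the global-in-time moment bound extends the solution to $\bbR_+$, while It\^o's formula confirms the marginal law solves \eqref{eq:gradient_flow_cauchy} in the measure-valued sense, with $\clP_p$-continuity inherited from the flow.

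For the energy identity the backbone is the formal computation of Section~\ref{sec:heuristic}. Rewriting the Laplacian as $\frac{\sigma^2}{2}\nabla\cdot(\nu_t\nabla\ln\nu_t)$ recasts the flow in continuity form $\partial_t\nu_t=-\nabla\cdot(\nu_t V_t)$ with velocity
$$
V_t:=\nabla\frac{\delta J^{\tau,0}}{\delta \nu}(\nu_t)-\frac{\sigma^2}{2}\nabla\ln\frac{\nu_t}{e^{-U}},
$$
and one checks $V_t=\nabla\frac{\delta J^{\tau,\sigma}}{\delta \nu}(\nu_t)$, so that the claim amounts to $\frac{d}{dt}J^{\tau,\sigma}(\nu_t)=\int_{\bbR^d}|V_t|^2\,\nu_t(d\theta)$. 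The $J^{\tau,0}$-contribution I would handle by pairing the weak form of the flow against the test function $\frac{\delta J^{\tau,0}}{\delta \nu}(\nu_t,\cdot)$, which by Theorem~\ref{thm:bnd_reg_Lion_deriv} is $C^1$ with bounded, $W_1$-Lipschitz gradient; combined with the fundamental theorem of calculus \eqref{eq:lin_der_diff_form} and the $W_1$-continuity of the functional derivative, this gives $\frac{d}{dt}J^{\tau,0}(\nu_t)=\int_{\bbR^d}\nabla\frac{\delta J^{\tau,0}}{\delta \nu}(\nu_t)\cdot V_t\,\nu_t(d\theta)$. The delicate term is $\frac{d}{dt}\textnormal{KL}(\nu_t|e^{-U})$, whose formal value $\int V_t\cdot\nabla\ln\frac{\nu_t}{e^{-U}}\,\nu_t(d\theta)$ involves the singular logarithmic derivative and presupposes that the Fisher-information integral $\int|\nabla\ln\nu_t|^2\nu_t$ is finite.

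Making this entropy-dissipation (de~Bruijn) step rigorous for rough, non-compactly-supported $\nu_0$ is the principal obstacle, and it is where I would follow and extend \cite{bogachev2016distances}. When $\sigma>0$ the nondegenerate diffusion supplies instantaneous parabolic smoothing, so $\nu_t$ is absolutely continuous with a locally regular, strictly positive density and finite Fisher information for $t>0$. I would first prove the identity for smooth, compactly supported, bounded-below initial data—where the integration by parts and the entropy derivative are legitimate—via truncation and mollification, and then pass to the limit, using lower semicontinuity of both the relative entropy and the Fisher information together with the a~priori energy bound to reach general $\nu_0\in\clP_2^{\textnormal{fe}}(\bbR^d)$. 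When $\sigma=0$ there is no entropy term, $V_t=\nabla\frac{\delta J^{\tau,0}}{\delta \nu}(\nu_t)$ is bounded, and only the already-handled $J^{\tau,0}$-computation survives, explaining why no free-energy or smoothing hypothesis on $\nu_0$ is needed in that case. Nonnegativity of the integrand gives monotonicity of $J^{\tau,\sigma}(\nu_t)$ as an immediate corollary.

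For uniqueness I would split along the hypotheses. Under $f\in\clA_2$ and Assumption~\ref{asm:GradU_Lip}, bound \eqref{ineq:bound_J0} with $k=2$ makes $\nabla\frac{\delta J^{\tau,0}}{\delta \nu}(\nu,\cdot)$ Lipschitz in $\theta$ uniformly in $\nu$, and with $\nabla U$ Lipschitz the full drift is Lipschitz in $\theta$ (uniformly in $\nu$) and $W_1$-Lipschitz in $\nu$; a synchronous coupling of two solutions driven by the same Brownian motion (trivial when $\sigma=0$), followed by a Gronwall estimate on $\bbE|\theta_t-\tilde\theta_t|^2$ controlling $W_2(\nu_t,\tilde\nu_t)$, forces the flows to coincide once the superposition principle is invoked to represent any measure-valued solution as such a law. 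In the regime $\sigma>0$ with $p\ge 4$ I cannot assume spatial Lipschitzness; instead I would exploit nondegeneracy of the diffusion, noting the drift is bounded (the $J$-part, by \eqref{ineq:bound_J0}) plus linearly growing (the $\nabla U$-part, by Assumption~\ref{asm:U_lin_growth}) and $W_1$-Lipschitz in the measure, and apply the uniqueness theory for nonlinear Fokker--Planck--Kolmogorov equations with nondegenerate diffusion and Wasserstein-Lipschitz coefficients; the condition $p\ge 4$ furnishes the moment control needed to place $\nu_t$ in the uniqueness class of that theory and to justify its estimates.
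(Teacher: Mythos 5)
Your overall decomposition (existence, energy identity, uniqueness) mirrors the paper's, but two of the three pieces have genuine gaps under the stated hypotheses.

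\textbf{Existence.} Your fixed-point construction cannot run under the assumptions the theorem actually makes for existence, namely $f\in \clA_1$ and Assumption~\ref{asm:U_lin_growth} only. The drift $b(\nu,\theta)=\nabla\frac{\delta J^{\tau,0}}{\delta \nu}(\nu,\theta)-\frac{\sigma^2}{2}\nabla U(\theta)$ is then merely measurable, bounded plus linearly growing, and $W_1$-Lipschitz in the measure; it is \emph{not} Lipschitz (nor even continuous, in general) in $\theta$ --- spatial Lipschitzness requires $f\in\clA_2$ and Assumption~\ref{asm:GradU_Lip}, which are precisely the \emph{uniqueness} hypotheses. Consequently: (i) for $\sigma=0$ the frozen-coefficient ODE may fail Peano's theorem or have multiple solutions, so your map $\Phi$ is not even well defined; (ii) even when $\Phi(m)$ makes sense, the contraction step requires comparing $\Phi(m)$ and $\Phi(m')$ via a synchronous coupling, and the Gronwall estimate there needs $|b(m_s,\theta_s)-b(m_s,\theta_s')|\lesssim|\theta_s-\theta_s'|$, i.e.\ exactly the missing spatial Lipschitz bound; the $W_1$-Lipschitzness in the measure alone does not close it. The paper sidesteps this entirely by quoting PDE existence results for nonlinear Fokker--Planck--Kolmogorov equations with low-regularity coefficients (Theorem~\ref{thm:wp_and_prop}, via \cite{funaki1984certain}, \cite{manita2014nonlinear}, \cite{hammersley2021mckean}), which is why existence holds under $\clA_1$ while uniqueness needs the stronger assumptions.

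\textbf{Energy identity.} Your plan to prove \eqref{eq:objective_along_flow} for smooth, compactly supported, bounded-below initial data and then approximate a general $\nu_0\in\clP_2^{\textnormal{fe}}(\bbR^d)$ has two problems. First, passing from approximating initial data $\nu_0^n\to\nu_0$ to the given solution requires stability of the flow with respect to the initial condition, which is unavailable in this regime precisely because uniqueness may fail; you cannot say which solution the approximating flows converge to. Second, lower semicontinuity of the relative entropy and of the Fisher information yields only one-sided inequalities, whereas \eqref{eq:objective_along_flow} is an identity: you would still need $\textnormal{KL}(\nu_t^n|e^{-U})\to\textnormal{KL}(\nu_t|e^{-U})$ and convergence of the dissipation integral, neither of which you address. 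The paper's proof works directly with the given solution: it uses the linear-equation regularity theory of \cite{bogachev2015fokker} for $t>0$ (finite Fisher information, Gaussian two-sided bounds, $L^1$-continuity at $t=0$, collected in Theorem~\ref{thm:wp_and_prop}), and instead of approximating the \emph{data} it approximates the entropy \emph{integrand}, replacing $g(x)=x\ln x-x$ by truncations $g_{k,m}$ with spatial cutoffs $\psi_n$, then passes to the limit by dominated convergence using the a priori bounds \eqref{ineq:log_Sob_h} and \eqref{ineq:entropy_h}; the hypothesis $\nu_0\in\clP_2^{\textnormal{fe}}(\bbR^d)$ enters exactly to make those bounds hold down to $t=0$. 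Your Step-I computation of $\frac{d}{dt}J^{\tau,0}(\nu_t)$ and your uniqueness discussion (synchronous coupling plus superposition in the Lipschitz regime, nondegenerate FPK uniqueness theory \`a la \cite{manita2015uniqueness} when $\sigma>0$ and $p\ge4$) are essentially sound and close to the paper's route, but they do not repair the two gaps above.
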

Theorem~\ref{thm:apriori_grad_flow} will be proved in Section~\ref{sec:proof_of_energy_identity}.

\begin{theorem}[McKean--Vlasov SDE well-posedness]
	\label{thm:MVSDE}
	Let  $f\in \clA_2$ and Assumptions \ref{asm:U_lin_growth} and \ref{asm:GradU_Lip} hold. Let $(\Omega, \clF, \bbF=(\clF_t)_{t\in \bbR_+}, \bbP)$ denote a filtered probability triple supporting an $\bbF$-adapted Wiener process $(W_t)_{t\in \bbR_+}$ and $\clF_0$-measurable random variable $\theta_0$ independent of $(W_t)_{t\in \bbR_+}$. Then there exists a unique continuous $\bbF$-adapted  solution $\theta: \Omega\times  \bbR_+\rightarrow \mathbb{R}^d$ of the McKean--Vlasov SDE
	\begin{equation}\label{eq:MVSDE}
	d\theta_t= \left(\nabla \frac{\delta J^{\tau,0}}{\delta \nu}(\textnormal{Law}(\theta_t),\theta_t) - \frac{\sigma^2}{2} \nabla U(\theta_t)\right)dt+\sigma dW_t\,,
	\end{equation}
where  $ \theta|_{t=0}=\theta_0$.  Moreover, $(\nu_t)_{t\ge 0}:=(\operatorname{Law}(\theta_t))_{t\ge 0}$ is the unique solution of~\eqref{eq:gradient_flow_cauchy} with $\nu_0:=\operatorname{Law}(\theta_0)$. 
\end{theorem}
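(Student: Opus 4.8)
The plan is to treat \eqref{eq:MVSDE} as a standard McKean--Vlasov SDE with additive noise whose coefficient regularity is entirely supplied by Theorem~\ref{thm:bnd_reg_Lion_deriv} and Assumptions~\ref{asm:U_lin_growth}--\ref{asm:GradU_Lip}, and to resolve it by a Banach fixed-point argument on flows of measures. Writing the drift as $b(\nu,\theta):=\nabla\frac{\delta J^{\tau,0}}{\delta\nu}(\nu,\theta)-\frac{\sigma^2}{2}\nabla U(\theta)$, I first record its three structural properties. From \eqref{ineq:bound_J0} with $k=2$ (which is where $f\in\clA_2$ is used) the map $\theta\mapsto\nabla\frac{\delta J^{\tau,0}}{\delta\nu}(\nu,\theta)$ is globally $C_2$-Lipschitz uniformly in $\nu$, and by Assumption~\ref{asm:GradU_Lip} the field $\nabla U$ is $L_U$-Lipschitz, so $\theta\mapsto b(\nu,\theta)$ is globally Lipschitz uniformly in $\nu$. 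Assumption~\ref{asm:U_lin_growth} together with the bound $|\nabla\frac{\delta J^{\tau,0}}{\delta\nu}(\nu,\theta)|\le C_1$ gives linear growth of $b$ in $\theta$. Finally the third estimate of Theorem~\ref{thm:bnd_reg_Lion_deriv} gives $W_1$-Lipschitzness in the measure, and by \eqref{ineq:W1W2} also $|b(\nu',\theta)-b(\nu,\theta)|\le L\,W_2(\nu',\nu)$.

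Next I would carry out the fixed point. Taking $\theta_0\in L^2(\Omega;\bbR^d)$ so that $\nu_0:=\operatorname{Law}(\theta_0)\in\clP_2(\bbR^d)$, fix $T>0$ and, for a candidate $(\mu_t)_{t\in[0,T]}\in C([0,T];\clP_2(\bbR^d))$, solve the decoupled SDE $d\theta^\mu_t=b(\mu_t,\theta^\mu_t)\,dt+\sigma\,dW_t$, $\theta^\mu_0=\theta_0$; classical strong well-posedness for Lipschitz, linear-growth drift with constant diffusion yields a unique adapted solution with $\sup_{t\le T}\bbE|\theta^\mu_t|^2<\infty$, and I set $\Phi(\mu)_t:=\operatorname{Law}(\theta^\mu_t)$. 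For two inputs $\mu,\mu'$ solved along the same $W$ and $\theta_0$, the constant diffusion cancels in the difference, so
\[
|\theta^\mu_t-\theta^{\mu'}_t|\le (C_2+L_U)\int_0^t|\theta^\mu_s-\theta^{\mu'}_s|\,ds+L\int_0^t W_2(\mu_s,\mu'_s)\,ds.
\]
Squaring, taking expectations and applying Grönwall, then using $W_2^2(\Phi(\mu)_t,\Phi(\mu')_t)\le\bbE|\theta^\mu_t-\theta^{\mu'}_t|^2$, yields $\sup_{t\le T}W_2^2(\Phi(\mu)_t,\Phi(\mu')_t)\le C_T\int_0^T W_2^2(\mu_s,\mu'_s)\,ds$. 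Equipping $C([0,T];\clP_2)$ with an exponentially weighted metric $\sup_t e^{-\lambda t}W_2(\cdot,\cdot)$ for large $\lambda$ (or iterating over short intervals) makes $\Phi$ a contraction, giving the unique fixed point, hence the unique strong solution of \eqref{eq:MVSDE} on $[0,T]$. The uniform-in-$T$ linear-growth moment bound then lets me patch intervals to obtain a global solution on $\bbR_+$; since the martingale term is absent when $\sigma=0$, the argument is uniform in $\sigma\ge0$.

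For the final identification I would set $\nu_t:=\operatorname{Law}(\theta_t)$ and verify it solves \eqref{eq:gradient_flow_cauchy}. Applying It\^o's formula to $\phi(\theta_t)$ for $\phi\in C_c^\infty(\bbR^d)$, the stochastic integral $\sigma\int_0^t\nabla\phi(\theta_s)\cdot dW_s$ has zero expectation since its integrand is bounded, and taking expectations gives $\frac{d}{dt}\int\phi\,d\nu_t=\int L_{\nu_t}\phi\,d\nu_t$, which is exactly the measure-valued formulation of $\partial_t\nu_t=L^*_{\nu_t}\nu_t$ with $\nu|_{t=0}=\nu_0$. Because $f\in\clA_2$ and Assumption~\ref{asm:GradU_Lip} hold, Theorem~\ref{thm:apriori_grad_flow} guarantees this Cauchy problem has a unique solution, so $(\nu_t)$ must coincide with it.

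Since the coefficients are bounded-plus-linear and globally Lipschitz thanks to Theorem~\ref{thm:bnd_reg_Lion_deriv}, the fixed-point step itself is routine; the two points needing care are passing from the $W_1$-Lipschitz measure estimate to a genuine $W_2$-contraction via \eqref{ineq:W1W2} and the coupling inequality, and ensuring the law flow lies in the same space $C(\bbR_+;\clP_p)$ in which Theorem~\ref{thm:apriori_grad_flow} asserts uniqueness, so that the identification is legitimate. I expect the only real (and still mild) obstacle to be the uniform-in-time bookkeeping of the moment bounds needed for global existence and membership in $\clP_p$, rather than any genuinely new estimate, as the hard analytic content is already absorbed into Theorem~\ref{thm:bnd_reg_Lion_deriv}.
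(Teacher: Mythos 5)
Your proposal is correct and takes essentially the same approach as the paper: the paper likewise reduces everything to the three coefficient properties (linear growth, Lipschitzness in $\theta$, and Wasserstein--Lipschitzness in the measure, recorded in Corollary~\ref{cor:b_growth_and_Lip} via Theorem~\ref{thm:bnd_reg_Lion_deriv} and Assumptions~\ref{asm:U_lin_growth} and~\ref{asm:GradU_Lip}), invokes McKean--Vlasov well-posedness, and then identifies $(\operatorname{Law}(\theta_t))_{t\ge 0}$ with the unique solution of~\eqref{eq:gradient_flow_cauchy} by It\^o's formula together with the uniqueness assertion of Theorem~\ref{thm:apriori_grad_flow}. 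The only difference is that where the paper cites \cite{carmona2018probabilistic}[Thm.\ 4.21] you carry out its standard weighted-metric Picard contraction by hand (and your spatial Lipschitz constant should read $C_2+\tfrac{\sigma^2}{2}L_U$ rather than $C_2+L_U$, a harmless bookkeeping slip).
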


If $f\in \clA_2$ and Assumptions \ref{asm:U_lin_growth} and \ref{asm:GradU_Lip} hold, then by Theorem~\ref{thm:bnd_reg_Lion_deriv}, we can conclude by \eqref{ineq:b_growth}, \eqref{ineq:b_lip_measure}, and \eqref{ineq:b_lip_theta} that the drift of \eqref{eq:MVSDE} has linear growth and is Lipschitz continuous in $\nu$ and $\theta$ (see Corollary \ref{cor:b_growth_and_Lip}).
Theorem~\ref{thm:MVSDE} then follows from~\cite{carmona2018probabilistic}[Thm.\ 4.21], It\^o's formula (\cite{krylov2008controlled}[Ch 2. Sec 10]) and Theorem~\ref{thm:apriori_grad_flow}.

The following theorem says that in the highly regularized regime, there is a unique solution of \eqref{eq:stationary_eq} which is a global optimizer of $J^{\tau,\sigma}$, and we have exponential convergence of the gradient flow \eqref{eq:gradient_flow_cauchy} to this unique optimizer.

\begin{assumption}[Dissipativity of $\nabla U$]\label{asm:GradU_diss} There exists a constant $\kappa>0$ such that for all $\theta,\theta' \in \mathbb{R}^d$,
	$$
	\left(\nabla U(\theta) - \nabla U(\theta')\right)\cdot (\theta - \theta')\ge \kappa |\theta -\theta'|^2.
	$$
\end{assumption}

\begin{theorem}[Convergence  in the regularized regime]\label{thm:ergodicity}
	Let $f\in \clA_2$ and Assumptions \ref{asm:U_lin_growth}, \ref{asm:GradU_Lip}, and \ref{asm:GradU_diss} hold. 
	Assume further that $\beta := \frac{\sigma^2}{2}\kappa-C_2-L>0$,  where $C_2$ and $L$ are the constants given in Theorem \ref{thm:bnd_reg_Lion_deriv}.
	Then there exists a unique solution $\nu^{\ast}$ of \eqref{eq:stationary_eq} which is the global maximizer $\nu^{\ast}$ of  $J^{\tau,\sigma}$ in $\mathcal{P}_2(\mathbb R^d)$. Moreover, if $(\nu_t)_{t\ge 0}$ is the solution of \eqref{eq:gradient_flow_cauchy} for a given $\nu_0\in \clP_2(\bbR^d)$, then for all  $t\in \bbR_+$,
	$$
	W_2(\nu_t,\nu^{\ast})\le e^{-\beta t}W_2(\nu_0,\nu^{\ast})\,.
	$$
\end{theorem}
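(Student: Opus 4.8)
The plan is to establish an exponential $W_2$-contraction of the flow by a synchronous coupling at the level of the McKean--Vlasov SDE~\eqref{eq:MVSDE}, and then to read off existence, uniqueness, the convergence rate, and the optimality of $\nu^{\ast}$ from this contraction combined with the energy identity~\eqref{eq:objective_along_flow} of Theorem~\ref{thm:apriori_grad_flow}.

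First I would fix two initial laws and, invoking Theorem~\ref{thm:MVSDE}, represent the two gradient flows as $\nu_t=\operatorname{Law}(\theta_t)$ and $\hat\nu_t=\operatorname{Law}(\hat\theta_t)$, where $\theta_t,\hat\theta_t$ solve~\eqref{eq:MVSDE} driven by the \emph{same} Wiener process $W$. Writing $b(\nu,\theta)=\nabla\frac{\delta J^{\tau,0}}{\delta \nu}(\nu,\theta)-\frac{\sigma^2}{2}\nabla U(\theta)$, the noise cancels in the difference, so pathwise $\frac{d}{dt}|\theta_t-\hat\theta_t|^2=2(\theta_t-\hat\theta_t)\cdot(b(\nu_t,\theta_t)-b(\hat\nu_t,\hat\theta_t))$. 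I would split the drift difference into the $U$-part and the $\frac{\delta J^{\tau,0}}{\delta\nu}$-part: the dissipativity Assumption~\ref{asm:GradU_diss} controls the first by $-\frac{\sigma^2}{2}\kappa|\theta_t-\hat\theta_t|^2$, and inserting and removing $\nabla\frac{\delta J^{\tau,0}}{\delta\nu}(\nu_t,\hat\theta_t)$ together with the bounds of Theorem~\ref{thm:bnd_reg_Lion_deriv} — namely that $\theta\mapsto\nabla\frac{\delta J^{\tau,0}}{\delta\nu}(\nu_t,\theta)$ is $C_2$-Lipschitz (from~\eqref{ineq:bound_J0} with $k=2$) and $\nu\mapsto\nabla\frac{\delta J^{\tau,0}}{\delta\nu}(\nu,\theta)$ is $L$-Lipschitz in $W_1\le W_2$ — bounds the second by $2C_2|\theta_t-\hat\theta_t|^2+2L|\theta_t-\hat\theta_t|W_2(\nu_t,\hat\nu_t)$.

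Taking expectations and setting $e(t)^2:=\bbE|\theta_t-\hat\theta_t|^2$, I would use $W_2(\nu_t,\hat\nu_t)\le e(t)$ and $\bbE|\theta_t-\hat\theta_t|\le e(t)$ to get $\frac{d}{dt}e(t)^2\le(2C_2+2L-\sigma^2\kappa)e(t)^2=-2\beta e(t)^2$; Gr\"onwall then yields $e(t)^2\le e^{-2\beta t}e(0)^2$, and minimizing over the initial coupling gives the contraction $W_2(\nu_t,\hat\nu_t)\le e^{-\beta t}W_2(\nu_0,\hat\nu_0)$. Existence and uniqueness of $\nu^{\ast}$ follow: choosing $T$ with $e^{-\beta T}<1$, the flow map $\Phi_T$ is a strict contraction on the complete space $(\clP_2(\bbR^d),W_2)$, so Banach's theorem furnishes a unique fixed point $\nu^{\ast}$ of $\Phi_T$; the semigroup identity $\Phi_T(\Phi_s\nu^{\ast})=\Phi_s(\Phi_T\nu^{\ast})=\Phi_s\nu^{\ast}$ and uniqueness force $\Phi_s\nu^{\ast}=\nu^{\ast}$ for all $s$, so $\nu^{\ast}$ is stationary and, by Corollary~\ref{cor:local_max_PDE}, solves $L_{\nu^{\ast}}^{\ast}\nu^{\ast}=0$; any two stationary measures coincide by letting $t\to\infty$ in the contraction. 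Taking $\hat\nu_0=\nu^{\ast}$ gives the stated rate $W_2(\nu_t,\nu^{\ast})\le e^{-\beta t}W_2(\nu_0,\nu^{\ast})$ for every $\nu_0\in\clP_2(\bbR^d)$.

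It then remains to identify $\nu^{\ast}$ as the global maximizer. For any $\nu_0\in\clP_2^{\textnormal{fe}}(\bbR^d)$ the energy identity~\eqref{eq:objective_along_flow} shows $t\mapsto J^{\tau,\sigma}(\nu_t)$ is nondecreasing, while the contraction gives $\nu_t\to\nu^{\ast}$ in $W_2$. Since $J^{\tau,0}$ is $W_1$-Lipschitz (Theorem~\ref{thm:bnd_reg_Lion_deriv}) and $\nu\mapsto\textnormal{KL}(\nu|e^{-U})$ is lower semicontinuous, $J^{\tau,\sigma}$ is upper semicontinuous along the flow, so $J^{\tau,\sigma}(\nu_0)\le\lim_t J^{\tau,\sigma}(\nu_t)\le J^{\tau,\sigma}(\nu^{\ast})$; as the supremum of $J^{\tau,\sigma}$ over $\clP_2(\bbR^d)$ is attained on $\clP_2^{\textnormal{fe}}(\bbR^d)$, $\nu^{\ast}$ is the global maximizer (and has finite relative entropy). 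I expect the main obstacle to be technical rather than conceptual: rigorously justifying the synchronous coupling and the almost-sure differentiation of $|\theta_t-\hat\theta_t|^2$ (relying on the well-posedness and moment bounds of Theorem~\ref{thm:MVSDE}), and carefully passing to the limit in the upper-semicontinuity step so that the monotonicity of $J^{\tau,\sigma}$ combines correctly with the discontinuity introduced by the entropy term.
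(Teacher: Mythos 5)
Your proposal is correct and its overall architecture matches the paper's: a synchronous-coupling $W_2$-contraction, followed by the energy identity \eqref{eq:objective_along_flow} combined with upper semicontinuity of $J^{\tau,\sigma}$ (Lipschitz continuity of $J^{\tau,0}$ from Theorem \ref{thm:bnd_reg_Lion_deriv} plus lower semicontinuity of relative entropy) to conclude $J^{\tau,\sigma}(\nu^{\ast})\ge J^{\tau,\sigma}(\nu_0)$ for all $\nu_0\in\clP_2^{\textnormal{fe}}(\bbR^d)$. The genuine difference is in the middle step. The paper does not re-derive the contraction: it invokes Theorem \ref{thm:cts_dep_flow} with $(\sigma',\tau',\nu_0')=(\sigma,\tau,\nu^{\ast})$ — whose proof is exactly your coupling computation — and it obtains existence and uniqueness of the solution of \eqref{eq:stationary_eq} by citing \cite{bogachev2018distances}[Thm.\ 4.1]. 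You instead manufacture the stationary solution yourself: Banach's fixed point theorem for the time-$T$ flow map $\Phi_T$ on the complete space $(\clP_2(\bbR^d),W_2)$, plus the semigroup argument upgrading a fixed point of $\Phi_T$ to a stationary point of the whole flow. This is more self-contained and avoids the external PDE citation (the paper itself remarks that a contraction argument in the spirit of \cite{komorowski2012central} combined with Theorem \ref{thm:cts_dep_flow} would work); what the citation buys the paper is brevity.

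Two repairs are needed to make your route airtight. First, your appeal to Corollary \ref{cor:local_max_PDE} to conclude that the fixed point solves $L_{\nu^{\ast}}^{\ast}\nu^{\ast}=0$ is a miscitation: that corollary passes from a local maximum with full support to the PDE, not from flow-stationarity to the PDE. The correct justification is immediate from the definition of a measure-valued solution: if $\Phi_s\nu^{\ast}=\nu^{\ast}$ for all $s$, then $\int_0^t\int_{\bbR^d} L_{\nu^{\ast}}\phi\,d\nu^{\ast}\,ds=0$ for all $t$ and all $\phi\in C_c^\infty(\bbR^d)$, hence $\int_{\bbR^d} L_{\nu^{\ast}}\phi\,d\nu^{\ast}=0$. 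Second, to conclude that $\nu^{\ast}$ is the \emph{unique solution of \eqref{eq:stationary_eq}} (not merely the unique fixed point of the flow), you need the converse identification: any solution $\nu$ of \eqref{eq:stationary_eq}, viewed as a constant path, solves the Cauchy problem \eqref{eq:gradient_flow_cauchy} started at $\nu$; by uniqueness of that Cauchy problem (available here since $f\in\clA_2$ and Assumption \ref{asm:GradU_Lip} holds) the flow from $\nu$ is this constant path, so $\Phi_t\nu=\nu$ and your contraction forces $\nu=\nu^{\ast}$. The remaining technicalities you flag are benign: under synchronous coupling the noise cancels exactly, so $|\theta_t-\hat\theta_t|^2$ is pathwise absolutely continuous, and the moment bounds needed to take expectations are supplied by Theorem \ref{thm:MVSDE} and Lemma \ref{lem:moment_est}, exactly as in the paper's proof of Theorem \ref{thm:cts_dep_flow}.
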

Theorem~\ref{thm:ergodicity} will be proved in Section~\ref{sec:proof_of_exp_conv}.
The exponential convergence holds also in the total variation norm~\cite{butkowsky2013ergodic}[Thm.\ 3.1], in $W_1$~\cite{bogachev2018distances}[Remark 4.2], and in $W_p$ using the method of Theorem~\ref{thm:cts_dep_flow} with the It\^o formula applied to higher powers, see also~\cite{vsivska2020gradient}.  From the general theory of nonlinear Fokker--Planck--Kolmogorov equations, one only expects existence and uniqueness of solutions to the stationary equation if $\sigma>0$ is large relative to the constants appearing in Theorem \ref{thm:bnd_reg_Lion_deriv}  (see, e.g., \cite{bogachev2019convergence}[Ex.\ 1.1], \cite{bogachev2018distances}[Ex.\ 4.3], \cite{manita2015uniqueness}[Sec.\ 6]). The exponential convergence of the policy gradient flow for large $\sigma$ can be regarded as an interpolation between the neural tangent kernel regime, where the neural network  can be linearized around the initialization/prior, and the mean-field regime, where the distribution is evolving with the training time (see \cite{mei2019mean}).

The following theorem estimates the sensitivity of solutions of \eqref{eq:gradient_flow_cauchy} and \eqref{eq:stationary_eq} on $\nu_0$, $\tau$, and $\sigma$.

\begin{theorem}[Sensitivity in $W_2$]
	\label{thm:cts_dep_flow}
	Let $f\in \clA_2$ and Assumptions \ref{asm:U_lin_growth}, \ref{asm:GradU_Lip}, and \ref{asm:GradU_diss} hold. Let $\sigma,\tau,\sigma',\tau'\ge 0$ and $\nu_0,\nu_0'\in \clP_2(\bbR^d)$. Let $(\nu_t)_{t\ge 0}$ and $(\nu'_t)_{t\ge 0}$ be the solutions of \eqref{eq:gradient_flow_cauchy} with parameters and initial data  $\sigma, \tau, \nu_0$ and $\sigma',\tau',\nu_0'$, respectively. Then for all $\ell>0$ and  $t\in \bbR_+,$
	\begin{equation}\label{ineq:W2_dist_flow_main}
		\begin{aligned}
			&W_2^2(\nu_t,\nu_t')\le  e^{-2\beta_{\ell} t} W_2^2(\nu_0,\nu_0')\\
	&\quad+ \frac{|\sigma^2-\sigma'^{2}|}{8\ell}\int_0^t\int_{\bbR^d}e^{2\beta_{\ell} (s-t)} |\nabla U(\theta)|^2\nu_s'(d\theta)\,ds\\
	&\quad + \frac{1}{2\beta_{\ell}}\left(D|\tau-\tau'|+d|\sigma-\sigma'|^2  \right)(1-e^{-2\beta_{\ell} t})\,,
		\end{aligned}
	\end{equation}
	where $\beta_{\ell}:=\frac{\sigma^2}{2}\kappa -C_2(\tau)-L(\tau)-\ell|\sigma^2-\sigma'^{2}|$ and $C_2(\tau)$ and $L(\tau)$ are the constants obtained in Theorem~\ref{thm:bnd_reg_Lion_deriv}.
	Moreover, if $\beta:=\frac{\sigma^2}{2}\kappa -C_2(\tau)-L(\tau)>0$ and $\nu^{\ast}$ and $\nu'^{\ast}$ are solutions of \eqref{eq:stationary_eq}  with  $\sigma, \tau$ and  $\sigma',\tau'$, respectively, then for all $\ell>0$ such that $\beta_{\ell}=\beta-\ell|\sigma^2-\sigma'^{2}|>0$, we have
	\begin{equation}\label{ineq:W2_dist_stationary_main}
		\begin{aligned}
		&W_2^2(\nu^{\ast},\nu'^{\ast})\le  \frac{|\sigma^2-\sigma'^{2}|}{16\ell \beta_{\ell}}\int_{\bbR^d}|\nabla U(\theta)|^2\nu'^{\ast}(d\theta)\\
		&\quad + \frac{1}{2\beta_{\ell}}\left(D|\tau-\tau'|+d|\sigma-\sigma'|^2  \right)\,.
	\end{aligned}
	\end{equation}
\end{theorem}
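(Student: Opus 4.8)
The plan is to exploit the McKean--Vlasov representation of Theorem~\ref{thm:MVSDE}, writing $\nu_t=\operatorname{Law}(\theta_t)$ and $\nu_t'=\operatorname{Law}(\theta_t')$, where $\theta$ and $\theta'$ solve~\eqref{eq:MVSDE} with the respective parameters. The key device is a \emph{synchronous coupling}: I would drive both SDEs by the \emph{same} Wiener process $W$ and start them from an optimal $W_2$-coupling of $\nu_0$ and $\nu_0'$, so that $\mathbb{E}|\theta_0-\theta_0'|^2=W_2^2(\nu_0,\nu_0')$. Setting $\Delta_t:=\theta_t-\theta_t'$ and $m(t):=\mathbb{E}|\Delta_t|^2\ge W_2^2(\nu_t,\nu_t')$, It\^o's formula gives a differential inequality for $m$. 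Because the two diffusion coefficients $\sigma,\sigma'$ differ but the noise is shared, the martingale part has quadratic variation producing a constant source $d(\sigma-\sigma')^2$, while the drift difference must be estimated term by term.

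The heart of the argument is the decomposition of the drift difference $b(\nu_t,\theta_t)-b'(\nu_t',\theta_t')$, where $b(\nu,\theta)=\nabla\frac{\delta J^{\tau,0}}{\delta\nu}(\nu,\theta)-\frac{\sigma^2}{2}\nabla U(\theta)$. I would split the $J$-part into three pieces and bound each using Theorem~\ref{thm:bnd_reg_Lion_deriv}: the measure shift is controlled by $L\,W_1(\nu_t,\nu_t')\le L\,W_2(\nu_t,\nu_t')\le L\sqrt{m(t)}$ via~\eqref{ineq:W1W2}; the spatial shift is controlled by the Hessian bound, giving $\Delta_t\cdot(\cdots)\le C_2|\Delta_t|^2$; and the $\tau$-shift is controlled by $D|\tau-\tau'|$. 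For the potential part I would write $\frac{\sigma^2}{2}\nabla U(\theta_t)-\frac{\sigma'^2}{2}\nabla U(\theta_t')=\frac{\sigma^2}{2}(\nabla U(\theta_t)-\nabla U(\theta_t'))+\frac{\sigma^2-\sigma'^2}{2}\nabla U(\theta_t')$; the dissipativity Assumption~\ref{asm:GradU_diss} turns the first summand into the contractive term $-\frac{\sigma^2}{2}\kappa|\Delta_t|^2$, and Young's inequality with free parameter $\ell>0$ controls the mismatch summand by $\ell|\sigma^2-\sigma'^2|\,|\Delta_t|^2+\frac{|\sigma^2-\sigma'^2|}{16\ell}|\nabla U(\theta_t')|^2$. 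Collecting everything (and absorbing the $\tau$-term with a further Young step) yields
\[
\frac{d}{dt}m(t)\le -2\beta_\ell\, m(t)+\frac{|\sigma^2-\sigma'^2|}{8\ell}\int_{\bbR^d}|\nabla U(\theta)|^2\nu_t'(d\theta)+D|\tau-\tau'|+d|\sigma-\sigma'|^2\,,
\]
with $\beta_\ell$ as in the statement. An integrating-factor (Gr\"onwall) argument with $e^{2\beta_\ell t}$ then produces~\eqref{ineq:W2_dist_flow_main}, since $W_2^2(\nu_t,\nu_t')\le m(t)$.

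For the stationary estimate~\eqref{ineq:W2_dist_stationary_main} I would specialise the flow bound to $\nu_0=\nu^\ast$, $\nu_0'=\nu'^\ast$, which exist and are unique by Theorem~\ref{thm:ergodicity} under $\beta>0$. Then $\nu_t\equiv\nu^\ast$ and $\nu_t'\equiv\nu'^\ast$ for all $t$, so the left-hand side of~\eqref{ineq:W2_dist_flow_main} is the constant $W_2^2(\nu^\ast,\nu'^\ast)$; letting $t\to\infty$ kills the $e^{-2\beta_\ell t}$ prefactor, sends $\int_0^t e^{2\beta_\ell(s-t)}\,ds\to\frac{1}{2\beta_\ell}$ (the integrand now running against the constant $\nu'^\ast$), and leaves exactly the two stated terms. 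The main obstacle is the mismatch in diffusion coefficients when $\sigma\ne\sigma'$: because the noise cannot be made to cancel, one is forced into the Young's-inequality trade-off encoded by $\ell$, which both degrades the contraction rate to $\beta_\ell$ and leaves behind the $|\nabla U|^2$ source that must be kept as a time integral (it is finite only thanks to the linear growth Assumption~\ref{asm:U_lin_growth} together with uniform-in-time second-moment bounds on $\theta_t'$). Verifying that all expectations are finite and that It\^o's formula applies rigorously — relying on the linear growth and Lipschitz bounds of the drift established via Theorem~\ref{thm:bnd_reg_Lion_deriv} and Assumptions~\ref{asm:U_lin_growth}--\ref{asm:GradU_Lip} — is the remaining technical point.
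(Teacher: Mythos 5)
Your proposal follows essentially the same route as the paper's proof: a synchronous coupling of the two McKean--Vlasov representations from Theorem~\ref{thm:MVSDE} started from an optimal coupling of the initial laws, It\^o's formula applied to $e^{2\beta' t}|\theta_t-\theta_t'|^2$ with the drift difference split into spatial, measure, $\sigma$- and $\tau$-pieces bounded via Theorem~\ref{thm:bnd_reg_Lion_deriv}, dissipativity of $\nabla U$, and Young's inequality with the free parameter $\ell$, and then the stationary bound obtained by viewing $\nu^{\ast},\nu'^{\ast}$ as constant-in-time solutions of \eqref{eq:gradient_flow_cauchy} and letting $t\to\infty$. The only caveat is that your appeal to Theorem~\ref{thm:ergodicity} for existence and uniqueness of the stationary solutions is both unnecessary (the statement merely hypothesizes that $\nu^{\ast}$ and $\nu'^{\ast}$ solve \eqref{eq:stationary_eq}) and would be circular, since the paper proves Theorem~\ref{thm:ergodicity} using the present theorem; dropping that remark leaves your argument matching the paper's.
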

Theorem~\ref{thm:cts_dep_flow} will be proved in Section~\ref{sec:proof_of_cts_dep}.  Assumption \ref{asm:U_lin_growth} and Lemma \ref{lem:moment_est}, proved in Section \ref{sec:proof_of_cts_dep}, yield bounds on $\int_{\bbR^d}|\nabla U(\theta)|^2\nu'_t(d\theta)$ for all $\sigma',\tau'\ge 0$ and $\nu'_0\in \clP_2(\bbR^d)$ and bounds on $\int_{\bbR^d}|\nabla U(\theta)|^2\nu'^{\ast}(d\theta)$ for all $\sigma',\tau'\ge 0$ such that either  $\sigma'^2\kappa>2C_2(\tau')$ or both  $\nabla U(0)=0$ and $\sigma'>0$.

As a corollary of Theorems \ref{thm:bnd_reg_Lion_deriv} and  \ref{thm:cts_dep_flow}, we  obtain the sensitivity with respect to the  MDP value function. 

\begin{corollary}[Sensitivity of value function]\label{cor:sens_value}
	Under the assumptions of Theorem \ref{thm:cts_dep_flow} with $(\nu_t)_{t\ge 0}$, $(\nu'_t)_{t\ge 0}$, $\nu^{\ast}$, and $\nu'^{\ast}$ defined accordingly, for all $\hat{\tau}\ge 0$, 
	$$
	|V^{\pi_{\nu_t}}_{\hat{\tau}}(\rho)-V^{\pi_{\nu_t'}}_{\hat{\tau}}(\rho)|\le C_1(\hat{\tau})W_2(\nu_t,\nu_t')
	$$
	$$ \quad \textrm{and} \quad |V^{\pi_{\nu^{\ast}}}_{\hat{\tau}}(\rho)-V^{\pi_{\nu'^{\ast}}}_{\hat{\tau}}(\rho)|\le  C_1(\hat{\tau})W_2(\nu^{\ast},\nu'^{\ast})\,,
	$$
	where $W_2(\nu_t,\nu_t')$ and $W_2(\nu^{\ast},\nu'^{\ast})$ can be estimated by the square-roots of \eqref{ineq:W2_dist_flow_main} and \eqref{ineq:W2_dist_stationary_main}, respectively.
\end{corollary}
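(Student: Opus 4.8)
The plan is to reduce the two asserted value-function bounds to the Lipschitz estimate on $J^{\tau,0}$ already recorded in Theorem~\ref{thm:bnd_reg_Lion_deriv}, and then to pass from the $W_1$ metric to the $W_2$ metric. First I would note the definitional identity $V^{\pi_\nu}_{\hat\tau}(\rho) = J^{\hat\tau,0}(\nu)$, which holds for every $\hat\tau \ge 0$ and every $\nu \in \clP_1(\bbR^d)$. With this in hand, the first inequality is immediate from the second estimate of Theorem~\ref{thm:bnd_reg_Lion_deriv} applied with $\tau = \hat\tau$, namely
$$
|V^{\pi_{\nu_t}}_{\hat\tau}(\rho) - V^{\pi_{\nu_t'}}_{\hat\tau}(\rho)| = |J^{\hat\tau,0}(\nu_t) - J^{\hat\tau,0}(\nu_t')| \le C_1(\hat\tau)\, W_1(\nu_t, \nu_t')\,.
$$
The Lipschitz constant $C_1(\hat\tau)$ is exactly the one produced by that theorem with the entropy parameter set to $\hat\tau$, and it is independent of the parameters $(\tau,\sigma,\tau',\sigma')$ governing the two flows.

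To obtain the stated $W_2$-form, I would invoke the comparison \eqref{ineq:W1W2} with $p=1$ and $p'=2$, giving $W_1(\nu_t,\nu_t') \le W_2(\nu_t,\nu_t')$, whence $|V^{\pi_{\nu_t}}_{\hat\tau}(\rho) - V^{\pi_{\nu_t'}}_{\hat\tau}(\rho)| \le C_1(\hat\tau)\, W_2(\nu_t,\nu_t')$. The stationary bound follows by the identical chain with $\nu_t,\nu_t'$ replaced by $\nu^\ast, \nu'^\ast$, both of which lie in $\clP_2(\bbR^d)$ under the hypotheses of Theorem~\ref{thm:cts_dep_flow}. Finally, the closing assertion that these $W_2$ distances may be estimated by the square roots of \eqref{ineq:W2_dist_flow_main} and \eqref{ineq:W2_dist_stationary_main} is nothing more than quoting Theorem~\ref{thm:cts_dep_flow} directly; no additional argument is needed.

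Since every ingredient is already available, there is no genuine analytic obstacle here — the corollary is a bookkeeping consequence of Theorems~\ref{thm:bnd_reg_Lion_deriv} and~\ref{thm:cts_dep_flow}. The only point that requires slight care is conceptual rather than technical: the entropy level $\hat\tau$ at which one measures the value function is deliberately decoupled from the regularization levels $\tau,\tau'$ that define the gradient flows, so one must apply the Lipschitz bound of Theorem~\ref{thm:bnd_reg_Lion_deriv} at the observation parameter $\hat\tau$ rather than at the dynamical parameters. This is precisely why the constant appearing in the conclusion is $C_1(\hat\tau)$, and why the result holds simultaneously for every $\hat\tau \ge 0$ along a single pair of flows defined by fixed $(\tau,\sigma)$ and $(\tau',\sigma')$.
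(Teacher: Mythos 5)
Your proof is correct and is exactly the argument the paper intends: the paper gives no separate proof of Corollary~\ref{cor:sens_value}, asserting only that it follows from Theorems~\ref{thm:bnd_reg_Lion_deriv} and~\ref{thm:cts_dep_flow}, and your chain --- the identity $V^{\pi_\nu}_{\hat\tau}(\rho)=J^{\hat\tau,0}(\nu)$, the Lipschitz estimate $|J^{\hat\tau,0}(\nu)-J^{\hat\tau,0}(\nu')|\le C_1(\hat\tau)W_1(\nu,\nu')$ from Theorem~\ref{thm:bnd_reg_Lion_deriv}, the comparison $W_1\le W_2$ from \eqref{ineq:W1W2}, and then quoting \eqref{ineq:W2_dist_flow_main} and \eqref{ineq:W2_dist_stationary_main} --- is precisely that bookkeeping. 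Your remark that the observation parameter $\hat\tau$ is deliberately decoupled from the dynamical parameters $\tau,\tau'$ is also the right reading of why the constant is $C_1(\hat\tau)$.
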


As discussed in the literature review, the authors of \cite{agazzi2020global} showed that if $(\nu^{0}_t)_{t\in \bbR_+}$ is a solution of \eqref{eq:gradient_flow_cauchy} with $\sigma=0$  and $\tau>0$ such that $\nu^{0}_t$ converges to a solution $\nu^{0,\ast}$ of \eqref{eq:stationary_eq} in $W_2$ with full support, then $V^{\pi_{\nu^{0,\ast}}}_{\tau}(s)=V^*_{\tau}(s)$ for all $s\in S$, provided $f$ is expressive enough (see \cite{agazzi2020global}[Asm.\ 1]). 
Letting $\nu^{\sigma,\ast}$ be as in Theorem \ref{thm:ergodicity}, by Corollary \ref{cor:sens_value}, we find that for all $\ell>0$,
\[
\begin{split}
& |V^{\pi_{\nu^{\sigma,\ast}}}_{\tau}(\rho)-V^*_{\tau}(\rho)|\\
& \le  \frac{\sigma C_1}{4\sqrt{\ell\beta}}\left(\int_{\bbR^d}|\nabla U(\theta)|^2\nu^{0,\ast}(d\theta)\right)^{\frac{1}{2}}+ \frac{\sigma C_1}{\sqrt{2\beta}}\,,
\end{split}
\]
where $\beta:=\frac{\sigma^2}{2}\kappa -C_2-L-\ell|\sigma^2|>0$. 
However, as discussed above, is not clear why, in the setting of~\cite{agazzi2020global}, one would expect $\lim_{t\rightarrow \infty}\nu^{0}_t= \nu^{0,\ast}$ in $W_2$. 

\section{Examples}

\subsection{The Effect of Regularization on Rate of Convergence}

Consider the bandit setting. Let $S=\emptyset$, $A=\mathbb{R}^d$, $\gamma=0$, $\mu(da)=da$, $\ell\in \mathbb R^d$, $\lambda>0$, and $r(a)=\ell \cdot a -\lambda |a|^2.$ While this example does not formally satisfy our assumptions due to unboundedness, we could extend our analysis to include this example with more cumbersome assumptions. The optimal policy in this setting is
$
\pi_{\tau}^{\ast}(da) \sim \mathcal{N}(\frac{\ell}{2\lambda}, \frac{\tau}{2\lambda}I_{d})
$
for $\tau>0$.
We let $f(\theta,a)=-\frac{\lambda}{\tau} |a-\theta|^2$, which implies  that 
$\pi_{\nu}\sim \mathcal{N}(\int_{\mathbb{R}^d}\theta \nu(d\theta),\frac{\tau}{2\lambda}I_{d})$. 
For a given $m_U\in \mathbb{R}^d$ and $\sigma_U>0,$ we let  $U(\theta)=\frac{d}{2}\ln 2\pi \sigma_U^2 +\frac{1}{2\sigma_U^2}|\theta-m_U|^2.$ The first-order condition for the  objective
\begin{equation*}
\begin{aligned}
J^{\tau,\sigma}(\nu)&= \int_{\mathbb{R}^d}\theta^{\top}\ell \nu(d\theta) - \lambda \left(\frac{\tau}{2\lambda}+\left|\int_{\mathbb{R}^d} \theta \nu(d\theta)\right|^2\right)\\
&\qquad  + \frac{\tau d}{2}\left(\ln 2\pi \sigma_f^2+1\right)-\frac{\sigma^2}{2}\textnormal{KL}(\nu|e^{-U})
\end{aligned}
\end{equation*}
is $
\int_{\mathbb{R}^d} \theta \nu^{\ast}(d\theta)=\frac{\ell}{2\lambda}
$ if $\sigma=0$ and 
$$
\nu^{\ast}
\sim e^{-\frac{1}{2\sigma_U^2}\left|\theta-\frac{m_U\sigma^2}{\sigma^2+4\lambda\sigma_U^2}-\frac{2\sigma_U^2\ell}{\sigma^2+4\lambda\sigma_U^2}\right|^2}
$$
if $\sigma>0$, where we have solved for the mean in the RHS of \eqref{eq:max_exp_form} using that the measure $\nu^{\ast}$ is Gaussian. We see that if $\sigma=0$, then there are infinitely many critical points. 
Moreover, if $\sigma >0$, then for any critical point, we have
$$\pi_{\nu^{\ast}}\sim \mathcal{N}\left(\frac{m_U\sigma^2}{\sigma^2+4\lambda\sigma_U^2}+\frac{2\sigma_U^2\ell}{\sigma^2+4\lambda\sigma_U^2},\frac{\tau}{2\lambda}I_{d}\right).$$
We see that if $\sigma=0$, then any critical point satisfies $\pi_{\nu^{\ast}}=\pi^{\ast}_{\tau}$, and thus $\nu^{\ast}\in \textnormal{argmax}_{\nu\in \mathcal P_2(\mathbb R^d)}V^{\pi_{\nu}}$.
For $m_U=0$ and small $\sigma>0$, the mean of $\pi_{\nu^{\ast}}$ is approximately $\ell/2\lambda$, and hence $\pi_{\nu^{\ast}} \approx \pi_{\tau}^{\ast}$, which Theorem \ref{thm:cts_dep_flow} and Corollary \ref{cor:sens_value} illustrate. 
Furthermore, the probabilistic representation of the gradient flow \eqref{eq:MVSDE} is given by
\begin{equation*}
d\theta_t = \left( \ell -2\lambda \mathbb{E}\theta_t-\frac{\sigma^2}{2\sigma_U^2}(\theta_t-m_U)\right)dt +\sigma dW_t\,
\end{equation*}
which is an Ornstein--Uhlenbeck-like process; the mean can be solved for explicitly and substituted back into the equation. For $\sigma>0,$ the process has a unique invariant measure and  converges exponentially fast, which can be shown using the same proof as that of Theorem \ref{thm:ergodicity}. We expect a similar phenomenon in the linear quadratic regulator setting, provided the function $f$ is chosen appropriately so that its mean is $Kx$, where $K$ is the control gain. For more complicated examples without any special structure (e.g., convexity) on the unregularized objective, we expect that one needs to take $\sigma$ larger to obtain exponential convergence at the level of the parameterization.

\subsection{Explicit Constants in Theorem~\ref{thm:bnd_reg_Lion_deriv}} 
The following example, while a bit contrived, illustrates the general procedure to  determine the constants in Theorem~\ref{thm:bnd_reg_Lion_deriv}.
The bandit setting is characterized by $S=\emptyset$ and $\gamma=0$ with a bounded reward function $r:A\rightarrow \mathbb{R}.$
We let 
$
f(\theta,a)=\psi(\theta)\cdot \tanh(g(a)),
$
where $\psi:\mathbb{R}\rightarrow \mathbb{R}$ is a smooth rescaling function such that $\psi$ and its derivatives up to order two are bounded by $|\psi|_{\infty}$,  and where for simplicity we take $g: A\rightarrow \mathbb{R}^d$ to be a random feature map. For given $m\in \mathbb{R}^d$ and  $\Sigma\in \mathbb{R}^{d\times d}$ satisfying $\Sigma^{-1}  \succeq \kappa I_{d}$, we  let $U(\theta):=2^{-1}\ln \textnormal{det}(2\pi \Sigma)+(\theta - m_U)^{\top}\Sigma^{-1}(\theta - m_U).
$
In this setting,  using Lemma \ref{lem:func_deriv_pi} and \ref{lem:func_deriv_J}, we find
\begin{equation*}
\begin{aligned}
&\nabla^k_{\theta}\frac{\delta J^{\tau,0}}{\delta \nu}(\nu,\theta)\\
&=\nabla^k_{\theta}\psi(\theta)\int_{A}\left(r(a)-\tau\tanh(g(a))\cdot  \int_{\mathbb{R}^d} \psi(\theta') \nu(d\theta')\right)\\
& \qquad \times \left( \tanh(g(a))-\int_{A}\tanh(g(a'))\pi_{\nu}(da')\right)\pi_{\nu}(da)\,.
\end{aligned}
\end{equation*}
Using \eqref{ineq:Lip_pi}, we obtain
$$
C=C_k := 2 |\psi|_{\infty} (|r|_{\infty}+\tau |\psi|_{\infty}), \quad k\in \mathbb{N},
$$
$$
\textnormal{ and } \qquad  L:=(6|r|_{\infty}+2\tau)|\psi|_{\infty}^2 + 6\tau |\psi|_{\infty}^3,
$$
and hence $
\frac{\sigma^2}{2}\kappa > C+L
$ in Theorem \ref{thm:ergodicity}
is  explicit.

\section{Conclusion}
\label{sec:conc_and_future_work}

We identified conditions that allow us to extend the work of~\cite{mei2020global}, where exponential convergence of the policy gradient method has been established in the tabular case to the continuous state and action setting with policies parameterized by two-layer neural networks in the mean-field regime. 
This was enabled by introducing entropic regularization in the space of parameterizations ($\sigma > 0)$ rather than just space of policies as is commonly done in entropy regularized in RL and careful analysis of the corresponding non-linear Fokker--Planck--Kolmogorov equations. The results and techniques of this paper open up many possible research directions of which we mention a few. It should be possible to extend the one-hidden-layer mean-field setting to recurrent neural network approximation, see~\cite{weinan2017proposal, hu2019meanode, jabir2019mean}. 
Moreover, it should be possible to extend the techniques identified here to actor-critic-type algorithms where the policy and the $Q$ function are approximated by their own mean-field neural networks~\cite{agarwal2020optimality,sirignano2019asymptotics}. Furthermore, since the main focus of RL is in the regime where the model is not known, it would be interesting to explore the non-idealized setting, where regret bounds are proved in terms of the number of samples of state and action pairs.

\section*{Acknowledgements}
We would like to thank the reviewers for their thoughtful comments and efforts toward improving our manuscript. JML is grateful for support from US AFOSR Grant FA8655-21-1-7034. LS is grateful for support from support from UKRI Prosperity Partnership Scheme (FAIR) under the EPSRC Grant EP/V056883/1.

\bibliography{softmax_policy_gradient}
\bibliographystyle{icml2022}

\newpage
\appendix
\onecolumn
\section{Appendix}

\subsection{Proofs}\label{sec:proofs}

\subsubsection{Auxiliary results}

We need the following version of the fundamental theorem of calculus of variations. 
\begin{lemma}[Lem.\ 33 in \cite{jabir2019mean}]\label{lem:ft_var}
	Let $\nu\in \mathcal{P}(\mathbb{R}^d)$ and $u: \mathbb{R}^d\rightarrow \mathbb{R}$ be measurable. Assume that $\int_{\bbR^d}u(\theta)\nu(d\theta)$ exists and is finite. If for all $\nu'\in \mathcal{P}(\mathbb{R}^d)$
	\begin{equation}\label{ineq:ft_var}
		\int_{\mathbb{R}^d}u(\theta)(\nu'-\nu)(d\theta)\ge 0\,,
	\end{equation}
	then $u$ is a constant $\nu$-a.e.. Moreover, if $\nu\in \mathcal{P}_2^{\textnormal{fe}}(\bbR^d)$ and \eqref{ineq:ft_var} holds for all $\nu'\in  \mathcal{P}_2^{\textnormal{fe}}(\bbR^d)$, then the conclusion still holds.
\end{lemma}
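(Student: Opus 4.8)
The plan is to reformulate the hypothesis \eqref{ineq:ft_var} as $\int_{\bbR^d} u\,d\nu' \ge c$ for every admissible $\nu'$, where $c := \int_{\bbR^d} u(\theta)\,\nu(d\theta)$, and then to exhibit $c$ as both a pointwise (resp. $\nu$-essential) lower bound for $u$ and the $\nu$-average of $u$. Once I know $u \ge c$ $\nu$-a.e., the identity $\int_{\bbR^d}(u-c)\,d\nu = 0$ applied to the nonnegative integrand $u-c$ forces $u = c$ $\nu$-a.e., which is the conclusion. So in both cases the work reduces to establishing the one-sided bound $u \ge c$.

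For the first statement, where $\nu'$ ranges over all of $\clP(\bbR^d)$, I would simply test with Dirac masses: taking $\nu' = \delta_\theta$ for arbitrary $\theta\in\bbR^d$ yields $u(\theta) \ge c$ for every $\theta$. This is immediate and requires no approximation, and then the integral identity above finishes the argument.

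For the second statement the Dirac masses are inadmissible, since $\clP_2^{\textnormal{fe}}(\bbR^d)$ consists of absolutely continuous measures with finite second moment and finite relative entropy relative to $e^{-U}$. The plan is to replace each Dirac by a localized absolutely continuous surrogate. Suppose, for contradiction, that $\nu(\{u<c\})>0$; then for some $\delta>0$ the sublevel set $B := \{u \le c-\delta\}$ has $\nu(B)>0$, and since $\nu\ll\lambda$ (as $\nu\in\clP_2^{\textnormal{fe}}(\bbR^d)$) the set $B$ has positive Lebesgue measure. Truncating to a large ball gives a bounded set $B'$ of positive Lebesgue measure on which $u\le c-\delta$. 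I would then define $\rho$ through the density $\tfrac{1}{Z}\mathbf{1}_{B'}e^{-U}$ with $Z = \int_{B'}e^{-U}\,d\lambda \in (0,\infty)$. This $\rho$ is absolutely continuous, has finite second moment because its support is bounded, and satisfies $\textnormal{KL}(\rho|e^{-U}) = -\ln Z < \infty$; hence $\rho\in\clP_2^{\textnormal{fe}}(\bbR^d)$. But $\int_{\bbR^d} u\,d\rho \le c-\delta < c$, contradicting the reformulated inequality, so in fact $u \ge c$ $\nu$-a.e., and the integral identity again yields $u=c$ $\nu$-a.e.

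The main obstacle is entirely in the second case: I must verify that the localized test measures genuinely lie in $\clP_2^{\textnormal{fe}}(\bbR^d)$. The two delicate points are (i) that $B'$ has strictly positive Lebesgue measure — obtained from $\nu\ll\lambda$ together with $\nu(B)>0$ — so that $Z>0$ and $\rho$ is well-defined, and (ii) that $\rho$ has finite relative entropy, which the explicit normalized-restriction form of the density makes transparent. The finite-second-moment condition is handled by the truncation to a ball, and the boundedness of $u$ from above on $B'$ keeps $\int u\,d\rho$ well-defined in $[-\infty, c-\delta]$. Beyond these verifications the argument is a direct variational comparison.
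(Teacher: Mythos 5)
Your proof is correct. Note that the paper itself gives no proof of this lemma: it is quoted as Lemma~33 of \cite{jabir2019mean}, so the only comparison available is with that reference, whose argument perturbs $\nu$ by moving mass between super- and sub-level sets of $u$ (i.e., the test measures are rescaled restrictions of $\nu$ itself). Your route is a clean variant of the same variational-testing idea: Dirac masses handle the unconstrained case immediately, and for the $\mathcal{P}_2^{\textnormal{fe}}(\bbR^d)$ case you test with the normalized restriction of the prior, $\rho = Z^{-1}\mathbf{1}_{B'}e^{-U}$, whose membership in $\mathcal{P}_2^{\textnormal{fe}}(\bbR^d)$ is transparent (bounded support gives the second moment, and $\textnormal{KL}(\rho|e^{-U})=-\ln Z<\infty$ is explicit). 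This is arguably tidier than restricting $\nu$, since one avoids having to check that restrictions of $\nu$ retain finite relative entropy. One small polish point: in the contradiction step, $u$ may be unbounded below on $B'$, so $\int u\,d\rho$ could equal $-\infty$; under the natural reading of \eqref{ineq:ft_var} this still violates the hypothesis, but if one insists that \eqref{ineq:ft_var} is only asserted for $\nu'$ making the integral finite, you should further intersect $B'$ with $\{u\ge -M\}$ for $M$ large enough that the resulting set still has positive Lebesgue measure (possible since $B=\bigcup_{M\in\bbN}(B\cap\{u\ge -M\})$), which makes $u$ bounded on the test set and the integral finite. With that one-line refinement the argument is airtight, and the final step — $u\ge c$ $\nu$-a.e.\ together with $\int_{\bbR^d}(u-c)\,d\nu=0$ forces $u=c$ $\nu$-a.e.\ — is exactly right.
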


We will also require the following property of relative entropy.
\begin{lemma}\label{lem:relative_entropy}
	Let $U:\bbR\to \mathbb R_+$ be measurable such that 
	$
	\int_{\mathbb{R}^d}e^{-U(\theta)}d\theta =1.
	$
	Let $\gamma(d\theta) = e^{-U(\theta)}d\theta$.
	Moreover, assume that there exist constants $C>0$ and $p \in \mathbb N$  such that $|U(\theta)|\le C_U(1+|\theta|^p)$ for all $\theta\in \bbR^d. $
	Then for all $\nu\in\clP_p(\bbR^d)$,
	$$
	\int_{\bbR^d} \ln \frac{d\nu}{d\gamma}(\theta)\nu(d\theta)  < \infty \quad \Leftrightarrow \quad  \int_{\bbR^d}  \left|\ln \frac{d\nu}{d\gamma}(\theta)\right|\nu(d\theta)  <\infty\,,
	$$
	$$
	\Leftrightarrow \quad \int_{\bbR^d} \ln \frac{d\nu}{d\theta}(\theta)\nu(d\theta)<\infty \quad \Leftrightarrow \quad  \int_{\bbR^d}  \left|\ln \frac{d\nu}{d\theta}(\theta)\right|\nu(d\theta)  <\infty\,.$$
\end{lemma}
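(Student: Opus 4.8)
The plan is to reduce all four finiteness conditions to the finiteness of a single positive-part integral. Throughout I take $\nu\ll\lambda$ (equivalently $\nu\ll\gamma$, since $e^{-U}>0$ makes $\gamma$ and $\lambda$ mutually absolutely continuous); for $\nu$ not absolutely continuous the Radon--Nikodym densities, and hence all four integrals, are undefined, so this is the only relevant case. Writing $h:=\frac{d\nu}{d\theta}$, the starting point is the $\nu$-a.e.\ identity $\ln\frac{d\nu}{d\gamma}=\ln h+U$, which follows from $\frac{d\nu}{d\gamma}=h\,e^{U}$. Alongside it I would record that $U\in L^1(\nu)$: the growth bound $|U|\le C_U(1+|\theta|^p)$ and $\nu\in\clP_p(\bbR^d)$ give $\int_{\bbR^d}|U|\,d\nu\le C_U\big(1+\int_{\bbR^d}|\theta|^p\,\nu(d\theta)\big)<\infty$.

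The key step is a negative-part bound coming from the fact that $\gamma$ is a probability measure. Setting $g:=\frac{d\nu}{d\gamma}$, the elementary estimate $-t\ln t\le 1/e$ for $t\in(0,1]$ gives $g\,(\ln g)^-\le 1/e$ on $\{g\le1\}$, while $(\ln g)^-=0$ on $\{g>1\}$; hence $\int_{\bbR^d}(\ln\frac{d\nu}{d\gamma})^-\,d\nu=\int_{\bbR^d} g\,(\ln g)^-\,d\gamma\le 1/e$. Consequently $\int\ln\frac{d\nu}{d\gamma}\,d\nu$ is well-defined in $(-\infty,\infty]$, so the first condition is equivalent to $\int(\ln\frac{d\nu}{d\gamma})^+\,d\nu<\infty$, i.e.\ to the second. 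Using the identity above and $U\in L^1(\nu)$ I then obtain $(\ln h)^-\le(\ln\frac{d\nu}{d\gamma})^-+|U|$, whence $\int(\ln h)^-\,d\nu\le 1/e+\int|U|\,d\nu<\infty$; thus $\int\ln h\,d\nu$ is likewise well-defined in $(-\infty,\infty]$, and the third condition is equivalent to $\int(\ln h)^+\,d\nu<\infty$, i.e.\ to the fourth.

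It remains to link the two groups, which I would do through the triangle inequalities $|\ln h|\le|\ln\frac{d\nu}{d\gamma}|+|U|$ and $|\ln\frac{d\nu}{d\gamma}|\le|\ln h|+|U|$: integrating against $\nu$ and invoking $U\in L^1(\nu)$ shows that the second and fourth conditions are equivalent, so all four coincide. I expect the only genuine subtlety to be the negative-part bound: it is what guarantees that the two signed integrals are well-defined in $(-\infty,\infty]$ and thereby lets the finiteness of each be read off from its positive part alone. The remaining manipulations are routine bookkeeping with $L^1(\nu)$ terms, once $U\in L^1(\nu)$ has been secured from the moment assumption.
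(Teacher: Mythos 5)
Your proposal is correct and follows essentially the same route as the paper: the negative-part bound from the elementary estimate $-t\ln t\le 1/e$ on $(0,1]$ (the paper phrases this as boundedness of $s\mapsto s(\ln s)^-$) yields the equivalence of the signed and absolute integrals against $\gamma$, and the triangle inequality together with $U\in L^1(\nu)$ (from the moment assumption) links the $\gamma$-density and Lebesgue-density conditions. If anything, your argument is slightly more careful than the paper's, since you explicitly establish $\int(\ln\frac{d\nu}{d\theta})^-\,d\nu<\infty$ via $(\ln\frac{d\nu}{d\theta})^-\le(\ln\frac{d\nu}{d\gamma})^-+|U|$, a step the paper leaves implicit when handling the signed integral against Lebesgue measure (where the $s(\ln s)^-$ trick alone fails because $\lambda$ is infinite).
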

\begin{proof}
	First notice that
	$$
	\int_{\bbR^d} \ln \frac{d\nu}{d\gamma}(\theta)\nu(d \theta)  =\int_{\bbR^d} \left(\ln \frac{d\nu}{d\gamma}(\theta)\right)^+\nu(d\theta)  -\int_{\bbR^d} \frac{d\nu}{d\gamma}(\theta) \left(\ln \frac{d\nu}{d\gamma}(\theta)\right)^- \gamma(d\theta)
	$$
	and
	$$
	\int_{\bbR^d} \left|\ln \frac{d\nu}{d\gamma}(\theta)\right|\nu(d\theta)  =\int_{\bbR^d} \left(\ln \frac{d\nu}{d\gamma}(\theta)\right)^+ \nu(d\theta)  +\int_{\bbR^d} \frac{d\nu}{d\gamma}(\theta) \left(\ln \frac{d\nu}{d\gamma}(\theta)\right)^- \gamma(d\theta),
	$$
	where for all $x\in \bbR$,  $x^+ = \max(x,0)$ and $x^{-} = - \min(x,0)$.  
	Using the fact that the function  $g(s)=s (\ln s)^-$ is bounded on $\bbR_+$, we find that 
	$$
	\int_{\bbR^d} \ln \frac{d\nu}{d\gamma}(\theta)\nu(d \theta) <\infty \quad \Leftrightarrow \quad 
	\int_{\bbR^d} \left|\ln \frac{d\nu}{d\gamma}(\theta)\right|\nu(d\theta)<\infty\,.
	$$
	The triangle and reverse triangle inequalities imply that 
	$$
	\int_{\bbR^d} \left|\ln  \frac{d\nu}{d\theta}(\theta)\right| \nu(d\theta)  - \int_{\bbR^d} U(\theta)  \nu(d\theta) \le \int_{\bbR^d} \left|\ln  \frac{d\nu}{d\gamma}(\theta)\right|\nu(d\theta)   \le \int_{\bbR^d}  \left|\ln  \frac{d\nu}{d\theta}(\theta)\right| \nu(d\theta)  + \int_{\bbR^d} U(\theta) \nu(d\theta)\,.
	$$
	By assumption, we have
	$$
	\int_{\bbR^d} U(\theta)\nu(d\theta)  \le C \int_{\bbR^d} (1+|\theta|)^p \nu(d\theta)< \infty
	$$
	and hence
	$$
	\int_{\bbR^d} \left|\ln  \frac{d\nu}{d\gamma}(\theta)\right| \nu(d\theta) <\infty \quad \Leftrightarrow \quad \int_{\bbR^d} \nu(\theta) \left|\ln \frac{d\nu}{d\theta}(\theta)\right| \nu(d\theta) <\infty\,.
	$$
	Combining the above equivalences, we complete the proof.
\end{proof}

\subsubsection{Auxiliary bounds}

In the proof of the main results, we will need the following auxiliary lemma, which concerns the boundedness of the i) log-density (relative to reference measure $\mu$) of the class of mean-field policies, ii) value function and iii) state-action value function. 

\begin{lemma}\label{lem:bounds}
	Assume that $f\in \clA_0$. For all  $\tau,\tau'\ge 0$, $\nu\in \mathcal P(\mathbb R^d)$, $s\in S$, and $\mu-a.e. \,a\in A$, we have
	\begin{gather*}
		\left|\ln \frac{d\pi_{\nu}}{d\mu}(a|s)\right| \le 2|f|_{\clA_0} +| \ln \mu(A)|\,,\quad 
		|V^{\pi_\nu}_{\tau}(s)|\le \frac{1}{1-\gamma} \left(|r|_{B_b(S\times A)}+\tau \left(2|f|_{\clA_0} +| \ln \mu(A)|\right)\right)\,,\\
		|Q^{\pi_{\nu}}_{\tau}(s,a)| \le \frac{1}{1-\gamma} \left(|r|_{B_b(S\times A)}+\gamma\tau \left(2|f|_{\clA_0} +| \ln \mu(A)|\right)\right)\,,\\
		|V^{\pi_\nu}_{\tau'}(s)-V^{\pi_\nu}_{\tau}(s)| \le \frac{ |\tau'-\tau|}{1-\gamma}\left(2|f|_{\clA_0} +| \ln \mu(A)|\right)\,,\\
		\textrm{and} \quad |Q^{\pi_\nu}_{\tau'}(s,a)-Q^{\pi_\nu}_{\tau}(s,a)| \le \frac{ \gamma |\tau'-\tau|}{1-\gamma}\left(2|f|_{\clA_0} +| \ln \mu(A)|\right)\,.
	\end{gather*}
\end{lemma}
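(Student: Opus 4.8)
The plan is to prove the five estimates in sequence, treating the log-density bound as the master estimate from which the remaining four follow by substitution into the occupancy representation \eqref{eq:value_function_occ} and the definition \eqref{def:Qpi}.

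First I would establish the bound on the log-density. Writing $F_\nu(s,a):=\int_{\mathbb{R}^d}f(\theta,s,a)\,\nu(d\theta)$ and using that $\nu$ is a probability measure together with $|f(\theta,s,a)|\le |f|_{\clA_0}$, the triangle inequality gives $|F_\nu(s,a)|\le |f|_{\clA_0}$ for every $s$ and $\mu$-a.e.\ $a$. Since
\[
\ln\frac{d\pi_\nu}{d\mu}(a|s)=F_\nu(s,a)-\ln\int_A e^{F_\nu(s,a')}\mu(da')\,,
\]
I would bound the partition function between $e^{-|f|_{\clA_0}}\mu(A)$ and $e^{|f|_{\clA_0}}\mu(A)$, so that its logarithm lies in the interval $[-|f|_{\clA_0}+\ln\mu(A),\,|f|_{\clA_0}+\ln\mu(A)]$, and then combine with $|F_\nu|\le|f|_{\clA_0}$ to obtain $|\ln(d\pi_\nu/d\mu)(a|s)|\le 2|f|_{\clA_0}+|\ln\mu(A)|$. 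Care is needed only in tracking the sign of $\ln\mu(A)$, which is precisely why the bound carries $|\ln\mu(A)|$ rather than $\ln\mu(A)$.

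Next, the value-function bound follows by inserting this log-density bound together with $|r(s',a')|\le |r|_{B_b(S\times A)}$ into \eqref{eq:value_function_occ}; since $\pi_\nu(\cdot|s')$ and $d^{\pi_\nu}(\cdot|s)$ are probability measures, the integration leaves only the prefactor $(1-\gamma)^{-1}$, giving the claimed estimate. The $Q$-bound is then immediate from \eqref{def:Qpi}: substitute the value-function bound, use that $P(\cdot|s,a)$ is a probability measure, and simplify $1+\gamma/(1-\gamma)=1/(1-\gamma)$ to produce the factor $\gamma$ in front of the entropy term.

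For the two $\tau$-difference estimates the crucial observation is that neither $\pi_\nu$ nor the occupancy kernel $d^{\pi_\nu}$ depends on $\tau$, so in \eqref{eq:value_function_occ} the entire $\tau$-dependence of $V^{\pi_\nu}_\tau$ resides in the explicit entropy term $-\tau\ln(d\pi_\nu/d\mu)$. Subtracting the representations at $\tau'$ and $\tau$ therefore yields
\[
V^{\pi_\nu}_{\tau'}(s)-V^{\pi_\nu}_{\tau}(s)=-\frac{\tau'-\tau}{1-\gamma}\int_S\int_A\ln\frac{d\pi_\nu}{d\mu}(a'|s')\,\pi_\nu(da'|s')\,d^{\pi_\nu}(ds'|s)\,,
\]
and the log-density bound gives the stated Lipschitz-in-$\tau$ estimate; the $Q$-difference bound follows identically from \eqref{def:Qpi}, with the $r(s,a)$ term cancelling and leaving the factor $\gamma$. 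There is no serious obstacle here: the only genuine point of care throughout is the measure-theoretic reading of $|f|_{\clA_0}$, whose essential supremum in $\theta$ is taken against Lebesgue measure, so that for arbitrary $\nu$ one should fix a representative of $f(\cdot,s,a)$ bounded everywhere (or restrict to absolutely continuous $\nu$), after which $|F_\nu|\le|f|_{\clA_0}$ holds pointwise and the argument goes through unchanged.
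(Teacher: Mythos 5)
Your proposal is correct and follows essentially the same route as the paper's proof: the log-density bound is established by directly estimating $\int f\,d\nu$ and the log-partition function, the $V$ and $Q$ bounds follow by substitution into \eqref{eq:value_function_occ} and \eqref{def:Qpi} using that $\pi_\nu$, $d^{\pi_\nu}$, and $P$ are stochastic kernels, and the $\tau$-difference estimates come from the same exact difference identity (the paper writes it with the factor $\frac{\tau-\tau'}{1-\gamma}$, which matches your $-\frac{\tau'-\tau}{1-\gamma}$). Your closing remark on choosing an everywhere-bounded representative of $f$, since $|f|_{\clA_0}$ is defined via a Lebesgue essential supremum while $\nu$ may be singular, is a legitimate point of care that the paper's proof passes over silently.
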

\begin{proof}
	Let $a\in A$, $s\in S$, and $\nu \in \mathcal{P}(\bbR^d)$ be arbitrarily given. Estimating directly, we find
	\begin{equation}\label{ineq:bound_on_log_pi}
		\begin{split}
			\left|\ln \frac{d\pi_{\nu}}{d\mu}(a|s)\right| &=\left| \int_{\mathbb{R}^d} f(\theta,s,a) \nu(d\theta)  - \ln\left(\int_{A}\exp\left(\int_{\mathbb{R}^d}f(\theta,s,a')\nu(d\theta)\right)\mu(da')\right)\right| \\
			&\le 2|f|_{\clA_0} +| \ln \mu(A)|\,.
		\end{split}
	\end{equation}
	Using \eqref{eq:value_function_occ}, \eqref{ineq:bound_on_log_pi}, $|\pi_{\nu}|_{b\mathcal{K}(A|S)}=1$ and $|d^{\pi_{\nu}}|_{b\mathcal{K}(S|S)}=1$, we  obtain
	\begin{equation}\label{eq:bound_on_V_pi}
		\begin{split}
			|V^{\pi_\nu}_{\tau}(s)|&=\frac{1}{1-\gamma}\left|\int_{S}\int_A \left(r(s',a)-\tau \ln \frac{d\pi_{\nu}}{d\mu}(a|s)\right)d^{\pi_\nu}(ds'|s)\pi_{\nu}(da|s')\right|\\
			&\le \frac{1}{1-\gamma}  \left(|r|_{B_b(S\times A)}+\tau \left|\ln \frac{d\pi_{\nu}}{d\mu}\right|_{B_b(S\times A)}\right)|\pi_{\nu}|_{b\mathcal{K}(A|S)} |d^{\pi_{\nu}}|_{b\mathcal{K}(S|S)}\\
			&\le \frac{1}{1-\gamma} \left(|r|_{B_b(S\times A)}+\tau \left(2|f|_{\clA_0} +| \ln \mu(A)|\right)\right)\,.
		\end{split}
	\end{equation}
	To estimate the state-action value function, we use \eqref{def:Qpi}, \eqref{eq:bound_on_V_pi}, and $|P|_{b\mathcal{K}(S|S\times A)}=1$ to get 
	\begin{align*}
		|Q^{\pi_{\nu}}_{\tau}(s,a)| &\le |r|_{B_b(S\times A)} + \gamma |V^{\pi_{\nu}}_{\tau}|_{B_b(S)}|P|_{b\mathcal{K}(S|S\times A)}\\
		&\le |r|_{B_b(S\times A)} +  \frac{\gamma}{1-\gamma} \left(|r|_{B_b(S\times A)}+\tau \left(2|f|_{\clA_0} +| \ln \mu(A)|\right)\right)\\
		&= \frac{1}{1-\gamma} \left(|r|_{B_b(S\times A)}+\gamma\tau \left(2|f|_{\clA_0} +| \ln \mu(A)|\right)\right)\,.
	\end{align*}
	The remaining inequalities are derived similarly using
	$$
	V^{\pi_\nu}_{\tau'}(s)-V^{\pi_\nu}_{\tau}(s)
	=\frac{\tau-\tau'}{1-\gamma}\int_{S}\int_A\ln \frac{d\pi_{\nu}}{d\mu}(a|s)\pi_{\nu}(da|s')d^{\pi}(ds'|s)\,,
	$$
	which follows from \eqref{eq:value_function_occ}.
\end{proof}

\subsubsection{Proof of Lemma \ref{lem:func_deriv_pi} and a corollary}
\label{sec:proofof:lem:func_deriv_pi}

\begin{proof}
	Let $\nu, \nu' \in \mathcal{P}(\mathbb{R}^d)$ and define $\nu^{\varepsilon}=\nu+\varepsilon (\nu'-\nu)$ for $\varepsilon\in[0,1]$. 
	We must show that 
	\begin{equation}\label{eq:wts_func_deriv_pi}
		\underset{\varepsilon \in [0,1]}{\lim_{\varepsilon\rightarrow 0}}\frac{\pi_{\nu^{\varepsilon}}(da|s)-\pi_{\nu}(da|s)}{\varepsilon}= \int_{\mathbb{R}^d} \frac{\delta \pi_{\nu}}{\delta \nu}(\nu,\theta)(da|s)(\nu'-\nu)(d\theta)\,,
	\end{equation}
	where $\frac{\delta \pi_{\nu}}{\delta \nu}(\nu,\theta)$ is given by the right-hand-side of \eqref{eq:func_deriv_pi} and the limit is understood in $b\mathcal{K}(A|S)$. 
	Recall from Section \ref{sec:notation} that
	\begin{align}
		& \left|\frac{\pi_{\nu^{\varepsilon}}-\pi_{\nu}}{\varepsilon}-\int_{\mathbb{R}^d} \frac{\delta \pi_{\nu}}{\delta \nu}(\nu,\theta)(da|s)(\nu'-\nu)(d\theta)\right|_{b\mathcal{K}(A|S)}\notag\\
		& =\sup_{s\in S}\int_{A}\left|\frac{\frac{d\pi_{\nu^{\varepsilon}}}{d\mu}(\cdot|s)-\frac{d\pi_{\nu}}{d\mu}(\cdot|s)}{\varepsilon}-\int_{\mathbb{R}^d} \frac{d\frac{\delta \pi_{\nu}}{\delta \nu}(\nu,\theta)}{d\mu}(a|s)(\nu'-\nu)(d\theta)\right|\mu(da)\,.\label{eq:func_deriv_pi_diff}
	\end{align}
	For convenience, we introduce the unnormalized policy $\tilde{\pi}: \mathcal{P}(\mathbb{R}^d) \rightarrow b\mathcal{K}_{\mu}(A|S)$ given by
	$$
	\tilde{\pi}(\nu)(da|s)=\tilde{\pi}_{\nu}(da|s)=\exp\left(\int_{\mathbb{R}^d}f(\theta,s,a)\nu(d\theta)\right)\mu(da)\,.
	$$
	For all $(s,a)\in S\times A$, we have
	\begin{align*}
		& \frac{d\pi_{\nu^{\varepsilon}}}{d\mu}(a|s)-\frac{d\pi_{\nu}}{d\mu}(a|s) \\
		& =\frac{\frac{d\tilde{\pi}_{\nu^{\varepsilon}}}{d\mu}(a|s)}{\tilde{\pi}_{\nu^{\varepsilon}}(A|s)}-\frac{\frac{d\tilde{\pi}_{\nu}}{d\mu}(a|s)}{\tilde{\pi}_{\nu}(A|s)}=\frac{\frac{d\tilde{\pi}_{\nu^{\varepsilon}}}{d\mu}(a|s)\tilde{\pi}_{\nu}(A|s)}{\tilde{\pi}_{\nu^{\varepsilon}}(A|s)\tilde{\pi}_{\nu}(A|s)}-\frac{\frac{d\tilde{\pi}_{\nu}}{d\mu}(a|s)\tilde{\pi}_{\nu^{\varepsilon}}(A|s)}{\tilde{\pi}_{\nu^{\varepsilon}}(A|s)\tilde{\pi}_{\nu}(A|s)}\\
		& =\frac{\frac{d\tilde{\pi}_{\nu^{\varepsilon}}}{d\mu}(a|s)-\frac{d\tilde{\pi}_{\nu}}{d\mu}(a|s)}{\tilde{\pi}_{\nu}(A|s)}\frac{\tilde{\pi}_{\nu}(A|s)}{\tilde{\pi}_{\nu^{\varepsilon}}(A|s)}+\frac{\frac{d\tilde{\pi}_{\nu}}{d\mu}(a|s)}{\tilde{\pi}_{\nu^{\varepsilon}}(A|s)}\frac{\tilde{\pi}_{\nu}(A|s)-\tilde{\pi}_{\nu^{\varepsilon}}(A|s)}{\tilde{\pi}_{\nu}(A|s)}\\
		& =\left[\frac{\frac{d\tilde{\pi}_{\nu^{\varepsilon}}}{d\mu}(a|s)-\frac{d\tilde{\pi}_{\nu}}{d\mu}(a|s)}{\tilde{\pi}_{\nu}(A|s)}+\frac{d\pi_{\nu}}{d\mu}(a|s)\frac{\tilde{\pi}_{\nu}(A|s)-\tilde{\pi}_{\nu^{\varepsilon}}(A|s)}{\tilde{\pi}_{\nu}(A|s)}\right]\frac{\tilde{\pi}_{\nu}(A|s)}{\tilde{\pi}_{\nu^{\varepsilon}}(A|s)}\\
		& =\left[\frac{\frac{d\tilde{\pi}_{\nu^{\varepsilon}}}{d\mu}(a|s)-\frac{d\tilde{\pi}_{\nu}}{d\mu}(a|s)}{\tilde{\pi}_{\nu}(A|s)}+\frac{d\pi_{\nu}}{d\mu}(a|s)\int_{A}\left(\frac{\frac{d\tilde{\pi}_{\nu^{\varepsilon}}}{d\mu}(a|s)-\frac{d\tilde{\pi}_{\nu}}{d\mu}(a|s)}{\tilde{\pi}_{\nu}(A|s)}\right)\mu(da)\right]\frac{\tilde{\pi}_{\nu}(A|s)}{\tilde{\pi}_{\nu^{\varepsilon}}(A|s)}\,.
	\end{align*}
	Simple manipulation yields
	$$
	\frac{d\tilde{\pi}_{\nu^{\varepsilon}}}{d\mu}(a|s)-\frac{d\tilde{\pi}_{\nu}}{d\mu}(a|s) =\exp\left(\int_{\mathbb{R}^d}f(\theta,s,a)\nu(d\theta)\right)\left( \exp\left(\varepsilon\int_{\mathbb{R}^d}f(\theta,s,a)(\nu'-\nu)(d\theta)\right)-1\right)\,.
	$$
	Taylor expanding the exponential function, we find
	\begin{align*}
		& \exp\left(\varepsilon\int_{\mathbb{R}^d}f(\theta,s,a)(\nu'-\nu)(d\theta)\right)-1\\
		& = \varepsilon\int_{\mathbb{R}^d}f(\theta,s,a)(\nu'-\nu)(d\theta)+\varepsilon^2\sum_{n=2}^\infty\varepsilon^{n-2} \frac{\left(\int_{\mathbb{R}^d}f(\theta,s,a)(\nu'-\nu)(d\theta)\right)^n}{n!}\,,
	\end{align*}
	and hence
	\begin{equation}
		\begin{aligned}\label{ineq:ratio_diff_pol_eps}
			\varepsilon^{-1}\frac{\frac{d\tilde{\pi}_{\nu^{\varepsilon}}}{d\mu}(a|s)-\frac{d\tilde{\pi}_{\nu}}{d\mu}(a|s)}{\tilde{\pi}_{\nu}(A|s)}&= \frac{d\pi_{\nu}}{d\mu}(a|s)\int_{\mathbb{R}^d}f(\theta,s,a)(\nu'-\nu)(d\theta)\\
			&\quad +\varepsilon \frac{d\pi_{\nu}}{d\mu}(a|s) \sum_{n=2}^\infty \varepsilon^{n-2}\frac{\left(\int_{\mathbb{R}^d}f(\theta,s,a)(\nu'-\nu)(d\theta)\right)^n}{n!}\,.
		\end{aligned}
	\end{equation}
	Thus, using  $\pi_{\nu}(A|s)=1$, we obtain
	\begin{gather*}
		\int_{A}\left|\varepsilon^{-1}\frac{\frac{d\tilde{\pi}_{\nu^{\varepsilon}}}{d\mu}(a|s)-\frac{d\tilde{\pi}_{\nu}}{d\mu}(a|s)}{\tilde{\pi}_{\nu}(A|s)}- \frac{d\pi_{\nu}}{d\mu}(a|s)\int_{\mathbb{R}^d}f(\theta,s,a)(\nu'-\nu)(d\theta)\right|\mu(da)\notag\\
		\le  \varepsilon\exp\left(|f|_{\clA_0}|\nu'-\nu|_{\mathcal{M}(\mathbb{R}^d)}\right)
	\end{gather*}
	and
	$$
	\int_{A}\varepsilon^{-1}\left|\frac{\frac{d\tilde{\pi}_{\nu^{\varepsilon}}}{d\mu}(a|s)-\frac{d\tilde{\pi}_{\nu}}{d\mu}(a|s)}{\tilde{\pi}_{\nu}(A|s)}\right|\mu(da) \le |f|_{\clA_0}|\nu'-\nu|_{\mathcal{M}(\mathbb{R}^d)}+\varepsilon \exp\left(|f|_{\clA_0}|\nu'-\nu|_{\mathcal{M}(\mathbb{R}^d)}\right).
	$$
	The dominated convergence theorem implies that 
	\begin{equation}\label{eq:limit_policy}
		\underset{\varepsilon \in [0,1]}{\lim_{\varepsilon\rightarrow 0}}|\pi_{\nu^{\varepsilon}}-\pi_{\nu}|_{b\mathcal{K}(A|S)}=\underset{\varepsilon \in [0,1]}{\lim_{\varepsilon\rightarrow 0}}\sup_{s\in S}\int_{A}\left|\frac{d\pi_{\nu^{\varepsilon}}}{d\mu}(\cdot|s)-\frac{d\pi_{\nu}}{d\mu}(\cdot|s)\right|\mu(da)=0\,,
	\end{equation}
	and hence
	$$
	\underset{\varepsilon \in [0,1]}{\lim_{\varepsilon\rightarrow 0}}\frac{\tilde{\pi}_{\nu}(A|s)}{\tilde{\pi}_{\nu^{\varepsilon}}(A|s)}=1\,.
	$$
	We also have the bound
	$$
	\sup_{s\in S}\frac{\tilde{\pi}_{\nu}(A|s)}{\tilde{\pi}_{\nu^{\varepsilon}}(A|s)} \le \exp(2|f|_{\clA_0})\,.
	$$
	Putting together the above bounds and limits, we find that limit of \eqref{eq:func_deriv_pi_diff} as $\varepsilon\rightarrow 0$ is zero, and hence we obtain \eqref{eq:wts_func_deriv_pi}. 
	We must now show that $\frac{\delta \pi}{\delta \nu}$ is bounded and continuous. Boundedness follows from
	\begin{align*}
		\sup_{s\in S}\int_A\left|\frac{d\frac{\delta \pi}{\delta \nu}(\nu,\theta)(a|s)}{d\mu}\right|\mu(da) &= \sup_{s\in S}\int_A\left|\left(f(\theta,s,a)-\int_{A}f(\theta,s,a')\pi_{\nu}(da'|s)\right)\frac{d\pi_{\nu}(a|s)}{d\mu}\right|\mu(da)\\
		&\le 2|f|_{\clA_0}\,.
	\end{align*}
	Continuity of $\frac{\delta \pi}{\delta \nu}$ in the product topology then follows from the dominated convergence theorem along with the assumption $|f|_{\clA_0}<\infty$.
\end{proof}

We will now use Lemma \ref{lem:func_deriv_pi} to obtain Lipschitz continuity of the occupancy measure. First, we will show Lipschitz continuity of the occupancy measure with respect to stochastic policies.

\begin{lemma}\label{lem:lip_occ_kernel}
	For given $\pi,\pi' \in \mathcal{P}(A|S)$, we have
	$$
	|d^{\pi'}-d^{\pi}|_{b\mathcal{K}(S|S)}\le \frac{\gamma}{1-\gamma}|\pi'-\pi|_{b\mathcal{K}(A|S)}\,.
	$$
\end{lemma}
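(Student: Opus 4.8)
The plan is to exploit the Banach algebra structure of $b\mathcal{K}(S|S)$ recorded in Section~\ref{sec:notation}, under which the kernel norm coincides with the operator norm and is therefore submultiplicative under composition. First I would write the difference of occupancy kernels as
$$
d^{\pi'}-d^{\pi}=(1-\gamma)\sum_{n=1}^{\infty}\gamma^n\left(P^n_{\pi'}-P^n_{\pi}\right),
$$
where the $n=0$ term drops out because $P^0_{\pi'}=P^0_{\pi}$ equals the identity kernel. To control each summand I would use the telescoping identity
$$
P^n_{\pi'}-P^n_{\pi}=\sum_{j=0}^{n-1}P^j_{\pi'}\left(P_{\pi'}-P_{\pi}\right)P^{n-1-j}_{\pi},
$$
valid in the Banach algebra, together with $|P^k_{\pi}|_{b\mathcal{K}(S|S)}=|P^k_{\pi'}|_{b\mathcal{K}(S|S)}=1$, each being a stochastic kernel. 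Submultiplicativity of the norm then yields $|P^n_{\pi'}-P^n_{\pi}|_{b\mathcal{K}(S|S)}\le n\,|P_{\pi'}-P_{\pi}|_{b\mathcal{K}(S|S)}$.

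The second step is to bound $|P_{\pi'}-P_{\pi}|_{b\mathcal{K}(S|S)}$ by $|\pi'-\pi|_{b\mathcal{K}(A|S)}$. Using the dual description of the kernel norm from Section~\ref{sec:notation}, for each $s\in S$ and each $h\in B_b(S)$ with $|h|_{B_b(S)}\le 1$ I would write
$$
\int_S h(s')(P_{\pi'}-P_{\pi})(ds'|s)=\int_A\left(\int_S h(s')P(ds'|s,a)\right)(\pi'-\pi)(da|s).
$$
Since $P(\cdot|s,a)$ is a probability measure, the inner integral defines $g(s,a):=\int_S h(s')P(ds'|s,a)$ with $|g|_{B_b(S\times A)}\le 1$; taking the supremum over such $h$ (hence over the induced $g$) and then over $s$ gives $|P_{\pi'}-P_{\pi}|_{b\mathcal{K}(S|S)}\le|\pi'-\pi|_{b\mathcal{K}(A|S)}$.

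Finally I would assemble the pieces and sum the resulting series. Combining the two previous steps,
$$
|d^{\pi'}-d^{\pi}|_{b\mathcal{K}(S|S)}\le(1-\gamma)\left(\sum_{n=1}^{\infty}n\gamma^n\right)|\pi'-\pi|_{b\mathcal{K}(A|S)},
$$
and since $\sum_{n=1}^{\infty}n\gamma^n=\gamma/(1-\gamma)^2$ for $\gamma\in[0,1)$, the prefactor collapses to $\gamma/(1-\gamma)$, which is exactly the claimed constant. None of the steps is genuinely hard; the only points requiring care are verifying that the convergence of the defining series in $b\mathcal{K}(S|S)$ legitimizes the term-by-term telescoping, and cleanly identifying the test function $g$ in the duality step so that its sup-norm bound is uniform in $s$. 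I expect the duality estimate $|P_{\pi'}-P_{\pi}|_{b\mathcal{K}(S|S)}\le|\pi'-\pi|_{b\mathcal{K}(A|S)}$ to be the conceptual crux, with everything else reducing to standard series manipulation.
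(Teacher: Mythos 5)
Your proof is correct and follows essentially the same route as the paper: the telescoping identity $P^n_{\pi'}-P^n_{\pi}=\sum_{j=0}^{n-1}P^j_{\pi'}(P_{\pi'}-P_{\pi})P^{n-1-j}_{\pi}$ in the Banach algebra $b\mathcal{K}(S|S)$, the bound $|P_{\pi'}-P_{\pi}|_{b\mathcal{K}(S|S)}\le|\pi'-\pi|_{b\mathcal{K}(A|S)}$ (which the paper gets by submultiplicativity with $|P|_{b\mathcal{K}(S|S\times A)}=1$ and you verify by hand via duality), and summation of $(1-\gamma)\sum_{n\ge 1}n\gamma^n=\gamma/(1-\gamma)$. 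Your write-up is in fact slightly cleaner than the paper's, whose displayed intermediate line drops the factor $n$ from the inner sum even though the final constant is right.
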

\begin{proof}[Proof of Lemma \ref{lem:lip_occ_kernel}] 
	It follows that
	$$
	d^{\pi'} - d^{\pi} =(1-\gamma)\sum_{n=0}^\infty\sum_{i=0}^{n-1} \gamma^n(P^{\pi'})^{n-i-1}(P^{\pi'}-P^{\pi})(P^{\pi})^{i}\,,
	$$
	where we understood both sides as elements of the Banach algebra $b\mathcal{K}(S|S)$, or equivalently as operators $\mathcal{L}(\mathcal{M}(S),\mathcal{M}(S))$. Thus, 
	\begin{align*}
		|d^{\pi'}-d^{\pi}|_{b\mathcal{K}(S|S)}&\le (1-\gamma)\sum_{n=0}^\infty\sum_{i=0}^{n-1} \gamma^n|P^{\pi'}|^{n-i-1}_{b\mathcal{K}(S|S)}|P^{\pi'}-P^{\pi}|_{b\mathcal{K}(S|S)}|P^{\pi}|_{b\mathcal{K}(S|S)}^i\\
		&\le (1-\gamma)\sum_{n=0}^\infty \gamma^n|P|_{b\mathcal{K}(S|A\times S)}|\pi'-\pi|_{b\mathcal{K}(A|S)}\\
		&= \frac{\gamma}{1-\gamma}|\pi'-\pi|_{b\mathcal{K}(A|S)}\,,
	\end{align*}
	which completes the proof.
\end{proof}

\begin{corollary}[Lipschitz continuity of the occupancy measure in the parameter measure]\label{cor:Lip_occ}
	For given $\nu,\nu'\in \mathcal{P}_1(\mathbb{R}^d)$, we have
	$$
	|d^{\pi_{\nu'}}-d^{\pi_{\nu}}|_{b\mathcal{K}(S|S)}\le 2|f|_{\clA_1}\frac{\gamma}{1-\gamma} W_1(\nu',\nu)\,.
	$$
\end{corollary}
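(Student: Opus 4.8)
The plan is to reduce the statement to a single Lipschitz estimate for the policy map and then invoke Lemma~\ref{lem:lip_occ_kernel}. Since that lemma already gives $|d^{\pi_{\nu'}}-d^{\pi_{\nu}}|_{b\mathcal{K}(S|S)}\le \frac{\gamma}{1-\gamma}|\pi_{\nu'}-\pi_{\nu}|_{b\mathcal{K}(A|S)}$, it suffices to prove the $W_1$-Lipschitz bound
\begin{equation}\label{ineq:Lip_pi}
|\pi_{\nu'}-\pi_{\nu}|_{b\mathcal{K}(A|S)}\le 2|f|_{\clA_1}\,W_1(\nu',\nu)\,,
\end{equation}
after which the corollary follows by composing the two inequalities. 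Establishing \eqref{ineq:Lip_pi} is the whole content of the argument.

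First I would write the difference of policies through the integral form of the functional derivative. Setting $\nu^{\varepsilon}:=\nu+\varepsilon(\nu'-\nu)$ and applying \eqref{eq:lin_der_diff_form} to the map $\pi:\clP(\bbR^d)\to b\mathcal{K}_{\mu}(A|S)$, whose linear functional derivative is supplied by Lemma~\ref{lem:func_deriv_pi}, gives
\[
\pi_{\nu'}-\pi_{\nu}=\int_0^1\!\int_{\bbR^d}\frac{\delta \pi}{\delta \nu}(\nu^{\varepsilon},\theta)(\nu'-\nu)(d\theta)\,d\varepsilon\,.
\]
I would then evaluate the $b\mathcal{K}(A|S)$ norm through its dual description: fix $s\in S$ and a test function $h\in B_b(A)$ with $|h|_{B_b(A)}\le 1$, pair $h$ against the identity above, and use Fubini to interchange the $\theta$- and $a$-integrals. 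This isolates, for each $\varepsilon$ and $s$, the scalar function $G^{\varepsilon}_{s,h}(\theta):=\int_A h(a)\frac{\delta\pi}{\delta\nu}(\nu^{\varepsilon},\theta)(da|s)$ integrated against the signed measure $\nu'-\nu$.

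The hard part — and the step where both $|f|_{\clA_1}$ and $W_1$ (rather than total variation) enter — is to bound $\int_{\bbR^d}G^{\varepsilon}_{s,h}(\theta)(\nu'-\nu)(d\theta)$ by Kantorovich--Rubinstein duality \eqref{eq:KRW1}. For this I would show that $G^{\varepsilon}_{s,h}$ is Lipschitz in $\theta$ with constant at most $2|f|_{\clA_1}$, uniformly in $\varepsilon,s,h$. Inserting the explicit expression \eqref{eq:func_deriv_pi}, $\frac{\delta\pi}{\delta\nu}(\nu^{\varepsilon},\theta)(da|s)=\big(f(\theta,s,a)-\int_A f(\theta,s,a')\pi_{\nu^{\varepsilon}}(da'|s)\big)\pi_{\nu^{\varepsilon}}(da|s)$, and using that $\pi_{\nu^{\varepsilon}}(\cdot|s)$ is a fixed probability measure independent of $\theta$, differentiation under the integral yields $\nabla_{\theta}G^{\varepsilon}_{s,h}(\theta)=\int_A h(a)\big(\nabla_{\theta}f(\theta,s,a)-\int_A\nabla_{\theta}f(\theta,s,a')\pi_{\nu^{\varepsilon}}(da'|s)\big)\pi_{\nu^{\varepsilon}}(da|s)$, whose modulus is at most $2|f|_{\clA_1}$ since $|h|\le 1$ and $|\nabla_{\theta}f|\le|f|_{\clA_1}$. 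Kantorovich--Rubinstein then gives $\int_{\bbR^d}G^{\varepsilon}_{s,h}(\theta)(\nu'-\nu)(d\theta)\le 2|f|_{\clA_1}W_1(\nu',\nu)$; integrating over $\varepsilon\in[0,1]$ and taking the supremum over $h$ and $s$ produces \eqref{ineq:Lip_pi}.

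I expect the main obstacle to be exactly this Lipschitz-in-$\theta$ computation together with the centering structure of $\frac{\delta\pi}{\delta\nu}$: the essential point is that the $\theta$-gradient commutes with the $\theta$-independent averaging against $\pi_{\nu^{\varepsilon}}(\cdot|s)$, so the resulting Lipschitz constant is governed by $|f|_{\clA_1}$ with no contribution from the softmax normalisation. The remaining technicalities — justifying the integral form \eqref{eq:lin_der_diff_form} in the Banach space $b\mathcal{K}(A|S)$, differentiation under the integral sign, and the Fubini interchange — are licensed by the boundedness and continuity of $\frac{\delta\pi}{\delta\nu}$ already recorded in Lemma~\ref{lem:func_deriv_pi}, and I expect them to be routine once the Lipschitz bound is secured.
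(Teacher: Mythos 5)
Your proposal is correct and follows essentially the same route as the paper: reduce to the policy bound \eqref{ineq:Lip_pi} via Lemma~\ref{lem:lip_occ_kernel}, represent $\int_A h(a)(\pi_{\nu'}-\pi_{\nu})(da|s)$ through the functional derivative of Lemma~\ref{lem:func_deriv_pi} in integral form, show the resulting test function of $\theta$ is $2|f|_{\clA_1}$-Lipschitz uniformly in $\varepsilon,s,h$ (the paper bounds the increment directly via the Lipschitzness of $f$ in $\theta$ rather than differentiating under the integral, which is equivalent), and conclude by Kantorovich--Rubinstein duality \eqref{eq:KRW1}.
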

\begin{proof}[Proof of Corollary \ref{cor:Lip_occ}]
	By Lemma \ref{lem:lip_occ_kernel} and \eqref{ineq:W1W2}, it is enough to show that
	\begin{equation}\label{ineq:Lip_pi}
		|\pi_{\nu'}-\pi_{\nu}|_{b\mathcal{K}(A|S)} \le 2|f|_{\clA_1} W_1(\nu',\nu)\,.
	\end{equation}
	We have 
	\begin{align*}
		|\pi_{\nu'}-\pi_{\nu}|_{b\mathcal{K}(A|S)}&=\sup_{s\in S}|\pi_{\nu'}(\cdot|s)-\pi_{\nu}(\cdot|s)|_{\mathcal{M}(A)}\\
		&=\sup_{s\in S}\underset{|h|_{B_b(A)}\le 1}{\sup_{h\in B_b(A)}}\int_{A}h(a)\left(\pi_{\nu'}-\pi_{\nu}\right)(da|s)\,.
	\end{align*}
	Let $\nu^{\varepsilon}=\varepsilon \nu'+(1-\varepsilon)\nu$, $\varepsilon\in [0,1]$. Let $s\in S$ and $h\in B_b(A)$ be arbitrarily given. Using Lemma \ref{lem:func_deriv_pi}, we find that 
	$$
	\int_{A}h(a)\left(\pi_{\nu'}-\pi_{\nu}\right)(da|s)=\int_{\mathbb{R}^d}g_s(\theta)(\nu'-\nu)(d\theta)\,,
	$$
	where
	\begin{align*}
		g_s(\theta):=\int_{A}h(a)\int_0^1\left(f(\theta,s,a)-\int_{A}f(\theta,s,a')\pi_{\nu^{\varepsilon}}(da'|s)\right)\,d\varepsilon\,\pi_{\nu^{\varepsilon}}(da|s)\,.
	\end{align*}
	Applying \eqref{ineq:essup_kernel}, we get that for all $\theta,\theta'\in \mathbb{R}^d$ and $s\in S$, 
	\begin{gather*}
		|g_s(\theta')-g_s(\theta)|\\
		= \left|\int_{A}h(a)\int_0^1\left(f(\theta',s,a)-f(\theta,s,a)+\int_{A}\left(f(\theta,s,a')-f(\theta',s,a')\right)\pi_{\nu^{\varepsilon}}(da'|s)    \right)\,d\varepsilon\,\pi_{\nu^{\varepsilon}}(da|s)\right|\\
		\le |h|_{B_b(A)}|\pi_{\nu^{\varepsilon}}|_{b\mathcal{K}(A|S)}\left(|f|_{\clA_1} +|f|_{\clA_1}|\pi_{\nu^{\varepsilon}}|_{b\mathcal{K}(A|S)} \right)|\theta'-\theta|
		\\ 
		\le 2|h|_{B_b(A)}|f|_{\clA_1}|\theta'-\theta|\,.
	\end{gather*}
	Using \eqref{eq:KRW1}, we obtain \eqref{ineq:Lip_pi},
	which completes the proof.
\end{proof}

\subsubsection{Proof of Lemma \ref{lem:func_deriv_J}}
\label{sec:proofof:lem:func_deriv_J}
\begin{proof}
	Let $\nu, \nu' \in \mathcal{P}_1(\mathbb{R}^d)$ and define $\nu^{\varepsilon}=\nu+\varepsilon (\nu'-\nu)$ for $\varepsilon\in[0,1]$. We must show that 
	\begin{equation}\label{eq:wts_func_deriv_J}
		\underset{\varepsilon \in [0,1]}{\lim_{\varepsilon\rightarrow 0}}\frac{J^{\tau,0}(\nu^{\varepsilon})-J^{\tau,0}(\nu)}{\varepsilon}= \int_{\mathbb{R}^d} \frac{\delta J^{\tau,0}}{\delta \nu}(\nu,\theta)(\nu'-\nu)(d\theta)\,,
	\end{equation}
	where  $\frac{\delta J}{\delta \nu}(\nu,\theta)$ is as specified in the statement of the lemma. Noting that  
	\begin{equation}\label{eq:Jtau_diff_quot}
		\frac{J^{\tau,0}(\nu^{\varepsilon})-J^{\tau,0}(\nu)}{\varepsilon} = \int_{S} \frac{V^{\pi_{\nu^{\varepsilon}}}_{\tau}(s)-V^{\pi_{\nu}}_{\tau}(s)}{\varepsilon}\rho(ds)\,,
	\end{equation}
	we first study the difference quotient
	$
	F^{\varepsilon}(s)=\frac{V^{\pi_{\nu^{\varepsilon}}}_{\tau}(s)-V^{\pi_{\nu}}_{\tau}(s)}{\varepsilon}\,.
	$
	By \eqref{eq:policy_Bellman}, we have
	\begin{equation}\label{eq:first_iterate_value}
		\begin{split}
			F^{\varepsilon}(s)&=\frac{1}{\varepsilon}\left[\int_{A}\left(r(s,a)+\gamma\int_{S}P(ds'|s,a)V^{\pi_{\nu^{\varepsilon}}}_{\tau}(s')-\tau\ln\frac{d\pi_{\nu^{\varepsilon}}}{d\mu}(a|s)\right)\pi_{\nu^{\varepsilon}}(da|s)\right.\\
			&\left.\qquad-\int_{A}\left(r(s,a)+\gamma\int_{S}P(ds'|s,a)V^{\pi_{\nu}}_{\tau}(s')-\tau\ln\frac{d\pi_{\nu}}{d\mu}(a|s)\right)\pi_{\nu}(da|s)\right]\\
			&=I^{\varepsilon}_1(s) + I^{\varepsilon}_2(s)+I^{\varepsilon}_3(s) +I^{\varepsilon}_4(s)+I^{\varepsilon}_5(s)\,,
		\end{split}
	\end{equation}
	where
	\begin{align*}
		I^{\varepsilon}_1(s)&:=\int_{A}r(s,a)\frac{\pi_{\nu^{\varepsilon}}(da|s)-\pi_{\nu}(da|s)}{\varepsilon}\,,\\
		I^{\varepsilon}_2(s)&:=\gamma\int_{A}\int_{S}\frac{V^{\pi_{\nu^{\varepsilon}}}_{\tau}(s')-V^{\pi_{\nu}}_{\tau}(s')}{\varepsilon}P(ds'|s,a)\pi_{\nu}(da|s)=\gamma\int_{S} F^{\varepsilon}(s')P_{\pi_{\nu}}(ds'|s)\,,\\
		I^{\varepsilon}_3(s)&:=\gamma\int_{A}\int_{S}V^{\pi_{\nu^{\varepsilon}}}_{\tau}(s')P(ds'|s,a)\frac{\pi_{\nu^{\varepsilon}}(da|s)-\pi_{\nu}(da|s)}{\varepsilon}\,,\\
		I^{\varepsilon}_4(s)&:=-\tau \int_{A}\frac{\ln\frac{d\pi_{\nu^{\varepsilon}}}{d\mu}(a|s) -\ln\frac{d\pi_{\nu}}{d\mu}(a|s) }{\varepsilon}\pi_{\nu^{\varepsilon}}(da|s)\,,\\
		I^{\varepsilon}_5(s)&:=-\tau\int_{A}\ln\frac{d\pi_{\nu}}{d\mu}(a|s)\frac{\pi_{\nu^{\varepsilon}}(da|s)-\pi_{\nu}(da|s)}{\varepsilon}\,. 
	\end{align*}
	Iterating \eqref{eq:first_iterate_value} (i.e., applying the usual policy gradient proof technique), we obtain
	\begin{align*}
		F^{\varepsilon}(s) &= I^{\varepsilon}_1(s) + I^{\varepsilon}_3(s)+I^{\varepsilon}_4(s)+I^{\varepsilon}_5(s) +\gamma  \int_{S}F^{\varepsilon}(s')P_{\pi_{\nu}}(ds'|s)\\
		&=\frac{1}{1-\gamma} \int_{S} (I^{\varepsilon}_1(s')+I^{\varepsilon}_3(s')+I^{\varepsilon}_4(s')+I^{\varepsilon}_5(s'))d^{\pi_{\nu}}(ds'|s)\,,
	\end{align*}
	and hence
	$$
	\frac{J^{\tau,0}(\nu^{\varepsilon})-J^{\tau,0}(\nu)}{\varepsilon} =\frac{1}{1-\gamma} \int_{S} (I^{\varepsilon}_1(s)+I^{\varepsilon}_3(s)+I^{\varepsilon}_4(s)+I^{\varepsilon}_5(s))d^{\pi_{\nu}}_{\rho}(ds)\,.
	$$
	We now will pass to the limit as $\varepsilon\rightarrow 0$. Let us begin with the $I_4^{\varepsilon}$-term. Recalling \eqref{eq:limit_policy}, we have
	\begin{equation}\label{eq:limit_policy_again}
		\underset{\varepsilon \in [0,1]}{\lim_{\varepsilon\rightarrow 0}}|\pi_{\nu^{\varepsilon}}-\pi_{\nu}|_{b\mathcal{K}(A|S)}=\underset{\varepsilon \in [0,1]}{\lim_{\varepsilon\rightarrow 0}}\sup_{s\in S}\int_{A}\left|\frac{d\pi_{\nu^{\varepsilon}}}{d\mu}(\cdot|s)-\frac{d\pi_{\nu}}{d\mu}(\cdot|s)\right|\mu(da)=0\,.
	\end{equation}
	Since
	$$
	\frac{d\pi_{\nu}}{d\mu}(a|s) = \tilde\pi_{\nu}(A)^{-1} \exp\left(\int_{A}f(\theta,s,a)\nu(d\theta)\right)\ge \frac{\exp(-2|f|_{\clA_0})}{\mu(A)}\,,
	$$
	there is an $\varepsilon_0\in (0,1]$ such that  for all $\varepsilon<\varepsilon_0$, $s\in S$, and $\mu-a.e. \,a\in A$,
	$$
	\left|\frac{\frac{d\pi_{\nu^{\varepsilon}}}{d\mu}(a|s)-\frac{d\pi_{\nu}}{d\mu}(a|s)}{\frac{d\pi_{\nu}}{d\mu}(a|s)}\right|<\frac{1}{2}\,.
	$$
	Taylor expanding the logarithm, we get 
	\begin{gather*}
		\ln\frac{d\pi_{\nu^{\varepsilon}}}{d\mu}(a|s) -\ln\frac{d\pi_{\nu}}{d\mu}(a|s) =\ln\left(1+\frac{\frac{d\pi_{\nu^{\varepsilon}}}{d\mu}(a|s)-\frac{d\pi_{\nu}}{d\mu}(a|s)}{\frac{d\pi_{\nu}}{d\mu}(a|s)}\right)\notag\\
		=\frac{1}{\frac{d\pi_{\nu}}{d\mu}(a|s)}\left(\frac{d\pi_{\nu^{\varepsilon}}}{d\mu}(a|s)-\frac{d\pi_{\nu}}{d\mu}(a|s)\right)+\sum_{n=2}^{\infty}(-1)^{n+1}\frac{\left(\frac{1}{\frac{d\pi_{\nu}}{d\mu}(a|s)}\left(\frac{d\pi_{\nu^{\varepsilon}}}{d\mu}(a|s)-\frac{d\pi_{\nu}}{d\mu}(a|s)\right)\right)^n}{n}\,.
	\end{gather*}
	Using \eqref{ineq:ratio_diff_pol_eps}, we find that 
	$$
	\frac{\frac{d\pi_{\nu^{\varepsilon}}}{d\mu}(a|s)-\frac{d\pi_{\nu}}{d\mu}(a|s)}{\frac{d\pi_{\nu}}{d\mu}(a|s)}=\varepsilon \int_{\mathbb{R}^d}f(\theta,s,a)(\nu'-\nu)(d\theta)+\varepsilon^2 \sum_{n=2}^\infty \varepsilon^{n-2}\frac{\left(\int_{\mathbb{R}^d}f(\theta,s,a)(\nu'-\nu)(d\theta)\right)^n}{n!}\,,
	$$
	which implies
	$$
	\left|\frac{\frac{d\pi_{\nu^{\varepsilon}}}{d\mu}(a|s)-\frac{d\pi_{\nu}}{d\mu}(a|s)}{\frac{d\pi_{\nu}}{d\mu}(a|s)}\right|\le \varepsilon |f|_{\clA_0}|\nu'-\nu|_{\mathcal{M}(\mathbb{R}^d)}+\varepsilon^2 \exp\left(|f|_{\clA_0}|\nu'-\nu|_{\mathcal{M}(\mathbb{R}^d)}\right)\,,
	$$
	Thus, by Lemma~\ref{lem:func_deriv_pi}, we have
	\begin{equation}
		\underset{\varepsilon \in [0,1]}{\lim_{\varepsilon\rightarrow 0}}\frac{\ln\frac{d\pi_{\nu^{\varepsilon}}}{d\mu}(a|s) -\ln\frac{d\pi_{\nu}}{d\mu}(a|s) }{\varepsilon}=\int_{\mathbb{R}^d}\left(f(\theta,s,a)-\int_{A}f(\theta,s,a')\pi_{\nu}(da'|s)\right)(\nu'-\nu)(d\theta)\,,\label{lem:func_deriv_log_pi}
	\end{equation}
	and that there exists a constant $M>0$ such that for all $\varepsilon<\varepsilon_0$, $s\in S$, and $\mu-a.e. \,a\in A$,
	$$
	\varepsilon^{-1}\left|\ln\frac{d\pi_{\nu^{\varepsilon}}}{d\mu}(a|s) -\ln\frac{d\pi_{\nu}}{d\mu}(a|s)\right| \le M\,.
	$$
	Therefore, owing to \eqref{eq:limit_policy_again} and \eqref{lem:func_deriv_log_pi}, we find
	$$
	\underset{\varepsilon \in [0,1]}{\lim_{\varepsilon\rightarrow 0}}I_4^{\varepsilon}(s)=-\tau\int_A \int_{\mathbb{R}^d}\left(f(\theta,s,a)-\int_{A}f(\theta,s,a')\pi_{\nu}(da'|s)\right)\pi_{\nu}(da|s)(\nu'-\nu)(d\theta)=0\,.
	$$
	We now turn our attention to  $I_3^{\varepsilon}$. Recalling \eqref{eq:value_function_occ}, we have 
	$$
	V^{\pi_{\nu^{\varepsilon}}}_{\tau}(s)=\frac{1}{1-\gamma}\int_{S}\int_A\left(r(s',a)-\tau\ln\frac{d\pi_{\nu^{\varepsilon}}}{d\mu}(a|s')\right)\pi_{\nu^{\varepsilon}}(da|s')d^{\pi_{\nu^{\varepsilon}}}(ds'|s)\,.
	$$
	It follows from Corollary \ref{cor:Lip_occ}, \eqref{eq:limit_policy_again}, Lemma \ref{lem:bounds}, and the boundedness of the reward $r$ that 
	$$
	\underset{\varepsilon \in [0,1]}{\lim_{\varepsilon\rightarrow 0}}V^{\pi_{\nu^{\varepsilon}}}_{\tau}(s)=V^{\pi_{\nu}}_{\tau}(s)\,.
	$$
	Thus, by Lemmas \ref{lem:bounds} and \ref{lem:func_deriv_pi}, we obtain 
	$$
	\underset{\varepsilon \in [0,1]}{\lim_{\varepsilon\rightarrow 0}}I_3^{\varepsilon}(s)=\gamma\int_A \int_S V^{\pi_{\nu}}_{\tau}(s')P(ds'|s,a)  \int_{\mathbb{R}^d}\frac{\delta \pi_{\nu}}{\delta \nu}(\nu,\theta)(da|s)(\nu'-\nu)(d\theta)\,.
	$$
	Using Lemmas \ref{lem:bounds} and \ref{lem:func_deriv_pi} and the boundedness of the reward, we get
	\begin{equation*}
		\underset{\varepsilon \in [0,1]}{\lim_{\varepsilon\rightarrow 0}}(I_1^{\varepsilon}(s)+I_5^{\varepsilon}(s))=\int_A \left(r(s',a)-\tau\ln\frac{d\pi_{\nu}}{d\mu}(a|s)\right)\int_{\mathbb{R}^d}\frac{\delta \pi_{\nu}}{\delta \nu}(\nu,\theta)(da|s)(\nu'-\nu)(d\theta)\,.
	\end{equation*}
	Putting it all together and using the definition of $Q^{\pi}$, we arrive at
	$$
	\underset{\varepsilon \in [0,1]}{\lim_{\varepsilon\rightarrow 0}}\sum_{j=1}^5 I_j^{\varepsilon}(s)=\int_A \left(Q_{\tau}^{\pi_{\nu}}(s,a)-\tau\ln\frac{d\pi_{\nu}}{d\mu}(a|s)\right)\int_{\mathbb{R}^d}\frac{\delta \pi_{\nu}}{\delta \nu}(\nu,\theta)(da|s)(\nu'-\nu)(d\theta)\,.
	$$
	Since $(I_j)_{1\le j\le 5}$ are bounded uniformly in $\varepsilon$, we may apply the bounded convergence theorem to pass to the limit in \eqref{eq:Jtau_diff_quot} to obtain \eqref{eq:wts_func_deriv_J}. Continuity of $\frac{\delta J^{\tau,0}}{\delta \nu}$ in the product topology then follows from the dominated convergence theorem and the continuity of $\pi=\pi(\nu)$, $d^{\pi_{\nu}}$ (see Corollary \ref{cor:Lip_occ}), and  $Q^{\pi_{\nu}}$ in $\nu$ (see \eqref{eq:value_function_occ}), and the joint continuity of $\frac{\delta \pi}{\delta \nu}$, which completes the proof.
\end{proof} 

\subsubsection{Proof of Theorem \ref{thm:bnd_reg_Lion_deriv}}
\label{sec:proof_of:thm:bnd_reg_Lion_deriv}

\begin{proof}
	For given $(s,a)\in S\times A$ and $\nu\in \mathcal{P}_1(\mathbb{R}^d)$, we denote 
	$$
	\bar{Q}^{\pi_{\nu}}_{\tau}(s,a)=\frac{1}{1-\gamma}\left(Q^{\pi_{\nu}}_{\tau}(s,a)-\tau\ln \frac{d\pi_{\nu}}{d\mu}(a|s)\right)\,.
	$$
	By Lemma \ref{lem:bounds}, for all $\tau\ge 0$, $\nu\in \mathcal P(\mathbb R^d)$, $s\in S$, and $\mu-a.e. \,a\in A$, we have
	\begin{equation}\label{eq:bound_on_Q-log}
		|\bar{Q}^{\pi_{\nu}}_{\tau}(s,a)|\le \frac{1}{(1-\gamma)^2} \left(|r|_{B_b(S\times A)}+\tau \left(2|f|_{\clA_0} +| \ln \mu(A)|\right)\right)\,.
	\end{equation}
	Thus, for all $\theta\in \mathbb{R}^d$, $\nu\in \mathcal{P}_1(\mathbb{R}^d)$ and $k\in \bbN$,
	\begin{align}
		\left|\nabla^k \frac{\delta J^{\tau,0}}{\delta \nu}(\nu,\theta)\right| & \le \int_{S}\int_{A}|\bar{Q}^{\pi_{\nu}}_{\tau}(s,a)|\left|\left(\nabla^{k} f(\theta,s,a)-\int_{A}\nabla^{k} f(\theta,s,a')\pi_{\nu}(da'|s)\right)\right|\pi_{\nu}(da|s)d^{\pi_{\nu}}_{\rho}(ds)\notag \\
		&\le\frac{2}{(1-\gamma)^2} \left(|r|_{B_b(S\times A)}+\tau \left(2|f|_{\clA_0} +| \ln \mu(A)|\right)\right) |f|_{\clA_k}\,,\label{ineq:proof_bound_J0}
	\end{align}
	which yields the first inequality in \eqref{ineq:bound_J0}. For arbitrarily given $\nu,\nu'\in \mathcal{P}_1(\mathbb{R}^d)$, define $\nu^{\varepsilon}=\nu+\varepsilon (\nu'-\nu)$, $\varepsilon\in[0,1]$. By  Lemma \ref{lem:func_deriv_J} and \eqref{eq:lin_der_diff_form} we have
	\begin{equation*}
		J^{\tau,0}(\nu')-J^{\tau,0}(\nu)=\int_0^1 \int_{\mathbb{R}^d} \frac{\delta J^{\tau,0}}{\delta \nu}(\nu^{\varepsilon}, \theta)(\nu'-\nu)(d\theta)\,d\varepsilon\,.
	\end{equation*}
	Applying \eqref{ineq:proof_bound_J0}, we obtain the second inequality in \eqref{ineq:bound_J0}
	\begin{equation*}
		|J^{\tau,0}(\nu')-J^{\tau,0}(\nu)|\le \sup_{\varepsilon \in [0,1],\theta\in \bbR^d}\left|\nabla \frac{\delta J^{\tau,0}}{\delta \nu}(\nu^{\varepsilon},\theta)\right|  W_1(\nu',\nu)\, .\end{equation*}
	For arbitrarily given $\theta\in \bbR^d$ and $\nu,\nu'\in \mathcal{P}_1(\mathbb{R}^d)$, we have
	\begin{equation*}
		\begin{split}
			& \nabla\frac{\delta J^{\tau,0}}{\delta \nu}(\nu',\theta)- \nabla \frac{\delta J^{\tau,0}}{\delta \nu}(\nu,\theta)=\int_{S}\int_{A}\bar{Q}^{\pi_{\nu'}}_{\tau}(s,a)\nabla \frac{\delta \pi}{\delta \nu}(\nu',\theta)(da|s)[d^{\pi_{\nu'}}_{\rho}-d^{\pi_{\nu}}_{\rho}](ds)(:=I_1)\\
			&+\int_{S}\int_{A}\left[\bar{Q}^{\pi_{\nu'}}_{\tau}(s,a)-\bar{Q}^{\pi_{\nu}}_{\tau}(s,a)\right]\nabla \frac{\delta \pi}{\delta \nu}(\nu',\theta')(da|s)d^{\pi_{\nu}}_{\rho}(ds)(:=I_2)\\
			&+ \int_{S}\int_{A}\bar{Q}^{\pi_{\nu}}_{\tau}(s,a)\left[\nabla \frac{\delta \pi}{\delta \nu}(\nu',\theta)(da|s)-\nabla \frac{\delta \pi}{\delta \nu}(\nu,\theta)(da|s)\right]d^{\pi_{\nu}}_{\rho}(ds)\\
			&= I_1+I_2 +\int_{S}\int_{A}\bar{Q}^{\pi_{\nu}}_{\tau}(s,a)\int_{A}\nabla  f(\theta,s,a')[\pi_{\nu'}-\pi_{\nu}](da'|s)\pi_{\nu'}(da|s)d^{\pi_{\nu}}_{\rho}(ds)(:=I_3)\\
			&+ \int_{S}\int_{A}\bar{Q}^{\pi_{\nu}}_{\tau}(s,a)\left(\nabla  f(\theta,s,a) -\int_{A}\nabla  f(\theta,s,a')\pi_{\nu'}(da'|s)\right)[\pi_{\nu'}-\pi_{\nu}](da|s)d^{\pi_{\nu}}_{\rho}(ds)(:=I_4)\,.
		\end{split}
	\end{equation*} 
	We will now estimate $(I_j)_{1\le j\le 4}$. Applying Corollary \ref{cor:Lip_occ} and \eqref{eq:bound_on_Q-log}, we find
	$$
	|I_1| \le  \frac{4\gamma}{(1-\gamma)^3} \left(|r|_{B_b(S\times A)}+\tau \left(2|f|_{\clA_1} +| \ln \mu(A)|\right)\right) |f|_{\clA_1}^2 W_1(\nu',\nu)\,.
	$$
	By virtue of \eqref{ineq:Lip_pi} and the duality description of the $b\mathcal{K}(A|S)$-norm, for arbitrarily given measurable $h:\mathbb{R}^d\times S\times A\rightarrow \mathbb{R}$, we have 
	$$
	\int_{A}h(\theta,s,a)[\pi_{\nu'}-\pi_{\nu}](da|s) \le 2|f|_{\clA_1} |h|_{L^\infty} W_1(\nu',\nu)\,.
	$$
	Thus, applying \eqref{ineq:Lip_pi} and \eqref{eq:bound_on_Q-log}, we deduce
	$$
	|I_3+I_4| \le \frac{4}{(1-\gamma)^2} \left(|r|_{B_b(S\times A)}+\tau \left(2|f|_{\clA_0} +| \ln \mu(A)|\right)\right) |f|_{\clA_1}^2W_1(\nu',\nu)\,.
	$$
	It remains to estimate $I_2$, and, in particular, to study the  Lipschitzness of $\bar{Q}_{\tau}^{\pi_\nu}$ in $\nu$, and hence the Lipschitzness of $Q_{\tau}^{\pi_\nu}$  and  $\ln \frac{d\pi_{\nu}}{d\mu}$ separately. 
	By \eqref{def:Qpi} and the fact that $$V^{\pi_{\nu}}_{\tau}(s)=J^{\tau.0}(\delta_s)(\nu), \;\;\forall s\in S\,,$$ using  Lemma \ref{lem:func_deriv_J}, \eqref{eq:lin_der_diff_form}, and  \eqref{ineq:proof_bound_J0} (with $\rho=\delta_s$), we find 
	\begin{align*}
		Q^{\pi_{\nu'}}_{\tau}(s,a)-Q^{\pi_{\nu}}_{\tau}(s,a)&=\gamma\int_{S}(V^{\pi_{\nu'}}_{\tau}(s')-V^{\pi_{\nu}}_{\tau}(s'))P(ds'|s,a)\\
		&=\gamma\int_{S}\int_0^1\int_{\mathbb{R}^d}\frac{\delta J^{\tau,0}(\delta_s)}{\delta \nu}(\nu^\varepsilon,\theta)(\nu'-\nu)(d\theta)P(ds'|s,a)\,d\varepsilon\\
		&\le \gamma \sup_{\nu\in \mathcal{P}_1(\mathbb{R}^d)}\sup_{\theta\in \mathbb{R}^d}\left|\nabla \frac{\delta J^{\tau,0}(\delta_s)}{\delta \nu}(\nu,\theta)\right| W_1(\nu',\nu) |P|_{b\mathcal{K}(S|A\times S)}\\
		&\le \frac{2\gamma }{(1-\gamma)^2} \left(|r|_{B_b(S\times A)}+\tau \left(2|f|_{\clA_0} +| \ln \mu(A)|\right)\right) |f|_{\clA_1}W_1(\nu',\nu)\, ,
	\end{align*}
	where in the first equality $\nu^{\varepsilon}=\nu+\varepsilon (\nu'-\nu)$ for $\varepsilon\in[0,1]$.
	Owing to \eqref{lem:func_deriv_log_pi} and the fundamental theorem of calculus (noting that $\pi_{\nu}$ is continuous in $W_1$), we find that for all $(s,a)\in S\times A$,
	\begin{align*}
		\ln\frac{d\pi_{\nu'}}{d\mu}(a|s)-\ln \frac{d\pi_{\nu}}{d\mu}(a|s)
		&=\int_0^1\int_{\mathbb{R}^d}\left(f(\theta,s,a)-\int_{A}f(\theta,s,a')\pi_{\nu^{\varepsilon}}(da'|s)\right)(\nu'-\nu)(d\theta)\,d\varepsilon\\
		&\le 2|f|_{\clA_1}W_1(\nu',\nu)\,.
	\end{align*}
	Thus,
	$$
	|I_2| \le  \frac{4}{1-\gamma}\left( \frac{\gamma }{(1-\gamma)^2} \left(|r|_{B_b(S\times A)}+\tau \left(2|f|_{\clA_0} +| \ln \mu(A)|\right)\right)+\tau\right)|f|_{\clA_1}^2W_1(\nu',\nu)\,.
	$$
	Putting it all together, we find that there is a constant $L=L(\gamma,|r|_{B_b(S\times A)},\tau,\mu(A),|f|_{\clA_1})$ such that 
	$$
	\left |\nabla\frac{\delta J^{\tau,0}}{\delta \nu}(\nu',\theta)- \nabla\frac{\delta J^{\tau,0}}{\delta \nu}(\nu,\theta)\right| \le  L W_1(\nu',\nu)\,.
	$$
	By Lemma \ref{lem:bounds}, for all $\tau,\tau'\ge 0$, $\nu\in \mathcal P(\mathbb R^d)$, $s\in S$, and $\mu-a.e. \,a\in A$, we have
	$$
	|\bar{Q}^{\pi_{\nu}}_{\tau'}(s,a)-\bar{Q}^{\pi_{\nu}}_{\tau}(s,a)|\le \frac{  |\tau'-\tau|}{(1-\gamma)^2}\left(2|f|_{\clA_0} +| \ln \mu(A)|\right)\,,
	$$
	and hence for all $\theta\in \bbR^d$,
	\begin{align*}
		\nabla\frac{\delta J^{\tau',0}}{\delta \nu}(\nu,\theta)-\nabla\frac{\delta J^{\tau,0}}{\delta \nu}(\nu,\theta)&=\int_{S}\int_{A}\left(\bar{Q}^{\pi_{\nu}}_{\tau'}(s,a)-\bar{Q}^{\pi_{\nu}}_{\tau}(s,a)\right)\nabla \frac{\delta \pi}{\delta \nu}(\nu,\theta)(da|s)d^{\pi_{\nu}}_{\rho}(ds)\\
		&\le  \frac{ 2|\tau'-\tau|}{(1-\gamma)^2}\left(2|f|_{\clA_0} +| \ln \mu(A)|\right)|f|_{\clA_1}\,,
	\end{align*}
	which completes the proof.
\end{proof}

\subsubsection{Proof of Theorem \ref{thm:optimality} and Corollary \ref{cor:local_max_PDE}}
\label{sec:proofof:thm:optimality}
\begin{proof}[Proof of Theorem~\ref{thm:optimality}]
	Let $\nu\in\mathcal{P}_1(\mathbb R^d)$ be a local maximizer  of $J^{\tau,\sigma}$. 
	Thus, there exists $\delta > 0$ such that $J^{\tau,\sigma}(\nu) \geq J^{\tau,\sigma}(\nu')$  for all $\nu'\in \mathcal{P}_1(\mathbb R^d)$ satisfying $W_1(\nu,\nu') < \delta$.
	Let  $\nu'\in\mathcal{P}_1(\mathbb{R}^d)$ satisfy $W_1(\nu',\nu) < \delta$ and define $\nu^{\varepsilon}=\nu+\varepsilon (\nu'-\nu)$ for $\varepsilon\in[0,1]$. 
	Note that for all $\varepsilon \in [0,1]$, we have $W_1(\nu,\nu^\varepsilon) < \delta$. Throughout the proof, we assume that $\nu \in \mathcal{P}_1^{\textnormal{fe}}(\bbR^d)$ if $\sigma>0$ since otherwise it would not be a local maximizer. Henceforth, we  also restrict to $\nu' \in \mathcal{P}_1^{\textnormal{fe}}(\bbR^d)$ if $\sigma>0$. For brevity of notation, we drop the $\theta$ dependence in all integrands throughout the proof.
	
	Using Lemma \ref{lem:func_deriv_pi}, we find
	$$
	0\le \frac{J^{\tau,\sigma}(\nu)-J^{\tau,\sigma}(\nu^{\varepsilon})}{\varepsilon}=-\frac{1}{\varepsilon}\int_0^\varepsilon\int_{\mathbb R^d}\frac{\delta J^{\tau,0}}{\delta\nu}(\nu^{\varepsilon'})(\nu'-\nu)(d\theta)\,d\varepsilon'+\frac{\sigma^2}{2\varepsilon}\left(\textnormal{KL}(\nu^{\varepsilon}|e^{-U})-\textnormal{KL}(\nu|e^{-U})\right)\,,
	$$
	where we note that if $\sigma > 0$ then $\textnormal{KL}(\nu^{\varepsilon}|e^{-U})<\infty$ since $\nu^{\varepsilon}$ is a convex combination of $\nu$ and $\nu'$, both of which belong to $\mathcal{P}_1^{\textnormal{fe}}(\bbR^d)$. It follows that
	\begin{gather*}
		\textnormal{KL}(\nu^{\varepsilon}|e^{-U})-\textnormal{KL}(\nu|e^{-U})=\int_{\mathbb R^d}\left(\nu^{\varepsilon}\ln\frac{\nu^{\varepsilon}}{e^{-U}}-\nu\ln\frac{\nu}{e^{-U}}\right)\,d\theta\\
		=\int_{\mathbb R^d}\left(\nu^{\varepsilon}\ln\nu^{\varepsilon}-\nu\ln\nu+U(\nu^{\varepsilon}-\nu)\right)\,d\theta\\
		=\int_{\mathbb R^d}\left(g(\nu^{\varepsilon})-g(\nu)+\varepsilon U(\nu'-\nu)\right)\,d\theta\,,
	\end{gather*}
	where $g(x):=x\ln x$. 
	Since $g$ is convex, we have that for all $\theta\in\mathbb R^d$,
	$$
	\frac{1}{\varepsilon}\left(g(\nu^{\varepsilon})-g(\nu)+\varepsilon U(\nu'-\nu)\right)
	\le g(\nu')-g(\nu)+ U(\nu'-\nu) =\nu'\ln\frac{\nu'}{e^{-U}}-\nu\ln\frac{\nu}{e^{-U}}\,.
	$$
	Since $\textnormal{KL}(\nu'|e^{-U})$ and $\textnormal{KL}(\nu|e^{-U})$ are both finite, 
	the right hand side of the above inequality is integrable, and hence applying reverse Fatou's lemma, we get
	$$
	\limsup_{\varepsilon\rightarrow 0}\frac{\textnormal{KL}(\nu^{\varepsilon}|e^{-U})-\textnormal{KL}(\nu|e^{-U})}{\varepsilon}\le \int_{\mathbb{R}^d}\left(g'(\nu) + U\right)(\nu'-\nu)(d\theta)\,.
	$$
	Thus, using the boundedness of $\frac{\delta J^{\tau,0}}{\delta\nu}$ (i.e.,  \eqref{ineq:bound_J0}) and  Fatou's Lemma, we obtain
	\begin{equation}\label{ineq:local_max}
		0\le \int_{\mathbb{R}^d}\left(-\frac{\delta J^{\tau,0}}{\delta\nu}(\nu,\theta)+\frac{\sigma^2}{2}\ln\nu+\frac{\sigma^2}{2}U\right)(\nu'-\nu)(d\theta)\,
	\end{equation}
	for all $\nu'\in \mathcal{P}_1(\bbR^d)$ if $\sigma=0$ and for all $\nu'\in \mathcal{P}_1^{\textnormal{fe}}(\bbR^d)$ if $\sigma>0$.
	Applying Lemma \ref{lem:ft_var}, we find that the function
	$$
	F(\theta)=\frac{\delta J^{\tau.0}}{\delta \nu}(\nu,\theta)-\frac{\sigma^2}{2}  U(\theta)-\frac{\sigma^2}{2}\ln\nu(\theta)
	$$
	is constant in $\theta$, $\nu$-a.e..
	
	We would like to show that if $\sigma > 0$ then $\nu$ is equivalent to the Lebesgue measure $\lambda$ and to that end, we follow the argument of the proof of \cite{hu2019meanode}[Prop.\ 3.9].
	Suppose, by contradiction, that $\nu$ is not equivalent to the Lebesgue measure. 
	Then there is a $\mathcal K \in \mathcal B(\mathbb R^d)$ such that $\nu(\mathcal K) = 0$ and $\lambda(\mathcal K) > 0$. Note that on $\mathcal K$, we have $\ln \nu = -\infty$. Rearranging~\eqref{ineq:local_max}, we get
	\begin{align*}
		\frac{\sigma^2}{2}\int_{\mathcal{K}}\ln\frac{\nu}{e^{-U}}\nu'(d\theta) &\ge 
		\int_{\mathbb{R}^d}\frac{\delta J^{\tau,0}}{\delta\nu}(\nu)(\nu'-
		\nu)(d\theta) - \frac{\sigma^2}{2}\textnormal{KL}(\nu|e^{-U})- \frac{\sigma^2}{2}\int_{\mathbb{R}^d\setminus \mathcal{K}}  \ln\frac{\nu}{e^{-U}}\nu'(d\theta)\,.
	\end{align*}
	Choosing $\nu'=e^{-U} \in \mathcal{P}_1^{\textnormal{fe}}(\bbR^d)$, we get
	$$
	\int_{\mathcal{K}}\ln\frac{\nu}{e^{-U}}\nu'(d\theta) = -\infty  \quad \textrm{and} \quad \int_{\mathbb{R}^d\setminus \mathcal{K}}\ln\frac{\nu}{e^{-U}}\nu'(d\theta)< \infty\,.
	$$
	Noting that $\textnormal{KL}(\nu|e^{-U})<\infty$ is finite because $\nu\in \mathcal{P}_1^{\textnormal{fe}}(\bbR^d)$ and that $\int_{\mathbb{R}^d}\frac{\delta J^{\tau,0}}{\delta\nu}(\nu,\theta)(\nu'-\nu)d\theta<\infty$ by \eqref{ineq:bound_J0}, we get a contradiction, and hence $\nu$ is equivalent to the Lebesgue measure. Therefore, if $\sigma>0$, we have that the function $F$ is constant in $\theta$, $\lambda$-a.e.. In particular, if $\sigma>0$, then for $\lambda$-a.a.\ $\theta\in \bbR^d$,
	$$
	\nu(\theta)=\mathcal{Z}^{-1} \exp\left(\frac{2}{\sigma^2}\frac{\delta J^{\tau,0}}{\delta \nu}(\nu,\theta)- U(\theta)\right), \quad \textnormal{where} \quad 
	\mathcal{Z}= \int_{\bbR^d}\exp\left(\frac{2}{\sigma^2}\frac{\delta J^{\tau,0}}{\delta \nu}(\nu,\theta')- U(\theta')\right)d\theta'\,.
	$$
\end{proof}

\begin{proof}[Proof of Corollary \ref{cor:local_max_PDE}]
Define $b:\clP_1(\bbR^d)\times \bbR^d\rightarrow \bbR^d$ by  $b(\nu,\theta)=\nabla \frac{\delta J^{\tau,0}}{\delta \nu}(\nu,\theta) - \frac{\sigma^2}{2} \nabla U(\theta)$. 
Recall that a measure $\tilde{\nu}$ is a solution of $L_{\tilde{\nu}}^*\tilde{\nu}=0$ if for all $\phi \in C_c^\infty(\mathbb{R}^d)$, we have 
$$
\int_{\mathbb{R}^d} L_{\tilde{\nu}}\phi(\theta) \tilde{\nu}(d\theta)=\int_{\mathbb{R}^d}\left(\frac{\sigma^2}{2}\Delta \phi(\theta) +b(\tilde{\nu}, \theta)\cdot \nabla \phi(\theta)\right)\tilde{\nu}(d\theta) =0\,.
$$
Using Theorem \ref{thm:optimality}, we have that 
$$
F(\theta)=\frac{\delta J^{\tau, 0}}{\delta \nu}(\nu,\theta)-\frac{\sigma^2}{2}  U(\theta)-\frac{\sigma^2}{2}\ln\nu(\theta)
$$
is constant in $\theta$, $\nu$-a.e.. By Theorem \ref{thm:bnd_reg_Lion_deriv} and our assumptions on $U$, we have that $\frac{\delta J^{\tau, 0}}{\delta \nu}+\frac{\sigma^2}{2}U$ is differentiable. Moreover,  in the case $\sigma>0$,
by \eqref{eq:max_exp_form}, we find that $\ln \nu$ is  differentiable. Thus, $\nabla F=0$, $\nu$-a.e., which implies that for all $\phi\in C_c^\infty(\bbR^d)$,
$$
\int_{\bbR^d}  \left( b(\nu,\theta)-\frac{\sigma^2}{2}\nabla \ln\nu(\theta)\right)\cdot\nabla \phi(\theta) \nu(d\theta)=0\,.
$$
Applying the divergence theorem, we find
$$
\int_{\bbR^d} L_{\nu} \phi(\theta) \nu(d\theta)=0\,,
$$
which completes the proof.
\end{proof}

\subsubsection{Proof of Theorem \ref{thm:apriori_grad_flow}}
\label{sec:proof_of_energy_identity}

Let $\sigma \geq 0$ and let $b:\mathcal P(\mathbb R^d) \times \mathbb R^d \to \mathbb R^d$ such that for all $\nu \in \mathcal P(\mathbb R^d) $ we have $b(\nu):\mathbb R^d \to \mathbb R^d$ locally bounded.  
We consider the Cauchy problem for $\nu: \bbR_+\rightarrow \mathcal{P}(\bbR^d)$ given by
\begin{equation}\label{eq:gradient_flow_Cauchy_proof}
\partial_t \nu_t = L_{\nu_t}^*\nu_t\,, \;\;t\in (0,\infty)\,,\quad \nu|_{t=0}=\nu_0\,,
\end{equation}
where for all $\nu\in \mathcal{P}(\bbR^d)$, $\phi \in C_c^\infty(\mathbb{R}^d),$ and $\theta\in \bbR^d$,
$$
L_{\nu} \phi(\theta) :=\frac{\sigma^2}{2}\Delta \phi(\theta) +b(\nu, \theta)\cdot \nabla \phi(\theta)\,. 
$$
Note that if $b(\nu,\theta) = \nabla \frac{\delta J^{\tau,0}}{\delta \nu}(\nu,\theta) - \frac{\sigma^2}{2} \nabla U(\theta)$, then this equation is the gradient flow  \eqref{eq:gradient_flow_cauchy} started at $\nu_0$. 
\begin{definition} 
	A  function $\nu: \mathbb{R}_+ \rightarrow \mathcal{P}(\mathbb{R}^d)$ is called a measure-valued solution of \eqref{eq:gradient_flow_Cauchy_proof} if for  all $\phi \in C_c^\infty(\mathbb{R}^d)$ and $t\in \mathbb{R}_+$,
	$$
	\int_{\bbR^d}\phi(\theta) \nu_t(d\theta)=\int_{\bbR^d}\phi(\theta) \nu_0(d\theta)+\int_0^t\int_{\bbR^d}L_{\nu_s}\phi(\theta) \nu_s(d\theta) \,ds\,.
	$$
\end{definition}

The next theorem recalls results concerning the existence and uniqueness of a solution of \eqref{eq:gradient_flow_Cauchy_proof}. 
Moreover, it documents important properties in the case $\sigma>0$ that will be required in the proof of Theorem \ref{thm:increasing_objective}. 
Let  $W^{1,1}(\bbR^d)=\{u\in \mathcal{D}'(\mathbb{R}^d): Du\in L^1(\mathbb{R}^d)\}$, where $\mathcal{D}'(\mathbb{R}^d)$ is the space of distributions on $\mathbb{R}^d$. Denote by $C^{2,1}(\bbR^d\times (0,\infty))$ the space of functions on $\bbR^d\times (0,\infty)$ that are twice continuously differentiable on $\bbR^d$ and once continuously differentiable on $(0,\infty)$.

\begin{theorem}[Existence, uniqueness, and properties]\label{thm:wp_and_prop} 
	Assume  that there exists a constants $C',L'>0$ such that for all $\nu,\nu'\in \mathcal{P}_1(\bbR^d)$ and $\theta \in \bbR^d$,
	$$
	|b(\nu,\theta)| \le C'(1+|\theta|)\quad  \textnormal{and} \quad |b(\nu',\theta)-b(\nu,\theta)|\le L' W_1(\nu',\nu)\,.
	$$ 
	If $\nu_0\in \mathcal{P}_p(\bbR^d)$ for some $p\in \bbN$, then there exists a  measure-valued solution $\nu \in C(\bbR_+;\clP_p(\bbR^d))$ of \eqref{eq:gradient_flow_Cauchy_proof}. Moreover, if $\sigma>0$, then we have that 
	\begin{enumerate}[i)]
		\item \cite{bogachev2015fokker}[Cor.\   6.3.2] $\nu
		:[0,T]\rightarrow \clP_p^{\textnormal{ac}}(\bbR^d)$;
		\item  \cite{bogachev2015fokker}[Rem.\ 7.3.12] $\nu_t\rightarrow \nu_0$ in $L^1(\bbR^d)$ as $t\downarrow 0$;
		\item  \cite{bogachev2015fokker}[Thm.\ 7.4.1]  If 
		$
		\int_{\bbR^d}|\ln \nu_0(\theta)| \nu_0(d\theta)<\infty,
		$
		then for almost every $t\in (0,\infty)$, $\nu_t\in W^{1,1}(\bbR^d)$, and for all $t>0$
		$$\int_0^t \int_{\bbR^d}|\nabla \ln \nu_s(\theta)|^2\nu_s(\theta)\,d\theta \,ds=\int_0^t \int_{\bbR^d}\frac{|\nabla \nu_s(\theta)|^2}{\nu_s(\theta)}\,d\theta \,ds<\infty\,.$$
		In particular, for all  $\phi \in C^1_b(\mathbb{R}^d)$ and $t_1,t_2\in \bbR_+$, we have 
		$$
		\int_{\mathbb{R}^d} \phi(\theta) \nu_{t_2}(d\theta)=\int_{\bbR^d}\phi(\theta)\nu_{t_1}(d\theta)+ \int_{t_1}^{t_2}\int_{\mathbb{R}^d}\nabla\phi(\theta)\cdot \left(b(\nu_s,\theta)-\frac{\sigma^2}{2}\nabla \ln \nu_s(\theta)\right)\nu_s(d\theta) \,ds\,;
		$$
		\item \cite{bogachev2015fokker}[Cor.\   8.2.2] For each compact interval $[t_1,t_2]\subset (0,\infty)$, there exists a constant $K=K(p,t_1,t_2,C',\sigma)$ such that for all $t\in [t_1,t_2]$ and $\theta\in \bbR^d$, we have 
		$$
		\exp\left[-K(1+|\theta|^2)\right] \le \nu_t(\theta) \le \exp\left[K(1+|\theta|^2)\right]\,;
		$$
		\item \cite{bogachev2015fokker}[Cor.\   8.2.5]  combined with \cite{bogachev2015fokker}[Cor.\   8.2.2] For every  compact interval $[t_1,t_2]\subset (0,\infty)$, we have $$\int_{t_1}^{t_2}\int_{\bbR^d}|\ln \nu_s(\theta)|\nu_s(\theta)\,d\theta \,ds<\infty\,.$$ In particular, if $\nu_0\in \clP_2^{\textnormal{fe}}(\bbR^d)$, then we can take $t_1=0$;
		\item  \cite{lasota2013chaos}[Thm.\ 11.7.1] Assume further that $b(\nu)\in C^3(\bbR^d)$ for all $\nu\in \mathcal{P}_2(\bbR^d)$ and $\nu_0 \in C_c^0(\bbR^d)$.
		Then  $\nu\in C^{2,1}(\bbR^d\times (0,\infty))$ and for all $t>0$
		$$
		|\nu_t(\theta)|, |\partial_t \nu_t(\theta)|, |\nabla \nu_t(\theta)|, |\nabla^2\nu_t(\theta)|\le Kt^{(-p+2)/2}\exp\left(-\frac{1}{2t}\delta |\theta|^2\right)\,.
		$$
	\end{enumerate}
	If, in addition, there exists an $L''>0$ such that for all $\nu\in \mathcal{P}_1(\bbR^d)$ and $\theta,\theta'\in \bbR^d$,
	\begin{equation}
		\label{eq:lip_in_space_for_ext_uniq_thm}
		|b(\nu,\theta')-b(\nu,\theta)|\le L''|\theta'-\theta|\,,
	\end{equation}
	or both $\sigma>0$ and $p\ge 4$, 
	then the solution $\nu$ is the unique solution of \eqref{eq:gradient_flow_Cauchy_proof}.
\end{theorem}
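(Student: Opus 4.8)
The plan is to split the statement into four parts: existence of a measure-valued solution with the claimed moment regularity, time-continuity in $\clP_p$, the properties (i)--(vi) in the $\sigma>0$ regime, and uniqueness in the two separate regimes. The properties (i)--(vi) I would not reprove: once a solution with $\sigma>0$ is constructed they are quoted verbatim from \cite{bogachev2015fokker} and \cite{lasota2013chaos}, and the only thing to verify is that our $b$ meets their hypotheses, which is immediate from the uniform linear-growth bound. So the genuine work is existence and uniqueness.

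For existence I would avoid solving the McKean--Vlasov equation directly, since with only linear growth and $W_1$-Lipschitz dependence on the measure the inner problem lacks spatial regularity. Instead I would mollify in space, $b^{\epsilon}(\nu,\cdot):=b(\nu,\cdot)*\rho_{\epsilon}$, and check that $b^{\epsilon}$ keeps the linear-growth and the $W_1$-Lipschitz bounds uniformly in $\epsilon$ while being globally Lipschitz in space for each fixed $\epsilon$. The regularized dynamics $dX^{\epsilon}_t=b^{\epsilon}(\operatorname{Law}(X^{\epsilon}_t),X^{\epsilon}_t)\,dt+\sigma\,dW_t$ (a deterministic flow when $\sigma=0$) is then well-posed by the standard fixed-point argument (cf. \cite{carmona2018probabilistic}[Thm.\ 4.21]), and $\nu^{\epsilon}_t:=\operatorname{Law}(X^{\epsilon}_t)$ solves the mollified form of \eqref{eq:gradient_flow_Cauchy_proof}. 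A Gronwall/BDG estimate off the linear-growth bound gives $\sup_{\epsilon}\sup_{t\le T}\bbE|X^{\epsilon}_t|^p<\infty$ and a Kolmogorov-type time modulus, hence tightness of $\{\nu^{\epsilon}\}$ in $C([0,T];\clP_p(\bbR^d))$. Passing to a subsequential limit $\nu$ and writing the weak formulation via It\^o's formula, I would send $\epsilon\to0$: the Laplacian term is linear and passes trivially, while the drift term converges using the $W_1$-continuity of $b$ in the measure together with the $L^1$-density convergence supplied (for $\sigma>0$) by property (i) and the compact support of $\nabla\phi$, which renders the local boundedness of $b$ effective. The uniform $p$-th moment upgrades weak to $\clP_p$-convergence and yields $\nu\in C(\bbR_+;\clP_p(\bbR^d))$ after $T\to\infty$. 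This limit passage in the drift is the delicate point of the existence half.

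For uniqueness under the spatial Lipschitz condition \eqref{eq:lip_in_space_for_ext_uniq_thm} the argument is clean: the full McKean--Vlasov equation is now well-posed (again \cite{carmona2018probabilistic}[Thm.\ 4.21]) and its marginal flow solves \eqref{eq:gradient_flow_Cauchy_proof}. To rule out \emph{other} measure-valued solutions I would invoke the superposition principle, representing any solution $\nu$ as the time-marginal law of the linear SDE with frozen drift $b(\nu_t,\cdot)$; since that linear SDE has Lipschitz coefficients, two solutions can be coupled and a direct estimate $W_2^2(\nu_t,\tilde\nu_t)\le C\int_0^t W_2^2(\nu_s,\tilde\nu_s)\,ds$, combining the spatial and $W_1$ Lipschitz bounds, forces $\nu=\tilde\nu$ by Gronwall.

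The hard case, and what I expect to be the main obstacle, is uniqueness when only $\sigma>0$ and $p\ge4$ are assumed: here $b$ need not be Lipschitz in space, so the coupling argument is unavailable. In this regime I would lean on the nonlinear Fokker--Planck--Kolmogorov uniqueness theory of \cite{bogachev2015fokker}, exploiting the regularizing effect of the nondegenerate Laplacian. Properties (iii)--(v) guarantee that each solution's density lies in $W^{1,1}(\bbR^d)$ with finite Fisher information and obeys two-sided Gaussian bounds, which is exactly the regularity needed to apply a uniqueness criterion to the \emph{linear} FPK equation with drift frozen at a competing solution; the fourth moment secures the integrability $\int_0^T\!\!\int_{\bbR^d}|b(\nu_s,\theta)|^2\,\nu_s(d\theta)\,ds<\infty$ those criteria require, after which the $W_1$-Lipschitz dependence on the measure closes the Gronwall loop and collapses two solutions onto one. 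Verifying that our low-regularity drift genuinely satisfies these integrability and density hypotheses uniformly along \emph{every} candidate solution is the most technical step, and it is precisely the burden that the nondegenerate-noise and high-moment assumptions are introduced to carry.
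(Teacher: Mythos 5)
Your route is genuinely different from the paper's, and it is worth saying what the paper actually does: its proof is almost entirely citation-based. Existence of a measure-valued solution is quoted from \cite{funaki1984certain}[Thm.\ 2.1] (alternatively \cite{manita2014nonlinear} or \cite{hammersley2021mckean}), continuity in $\clP_p$ follows from weak continuity plus \cite{villani2009optimal}[Thm.\ 6.9], properties i)--vi) follow from the single observation that the nonlinear solution $\nu$ solves the \emph{linear} equation $\partial_t\mu_t=L^*_{\nu_t}\mu_t$ with frozen coefficient, so the linear theory of \cite{bogachev2015fokker} applies verbatim, and both uniqueness claims are obtained by checking the hypotheses (DH1--DH4, resp.\ H1--H4) of \cite{manita2015uniqueness}[Thm.\ 4.4 and Thm.\ 3.1]. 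Your superposition-plus-coupling argument for uniqueness under \eqref{eq:lip_in_space_for_ext_uniq_thm} is sound and arguably more transparent than verifying DH1--DH4; your treatment of i)--vi) matches the paper's in substance (though you should state explicitly the frozen-coefficient observation, since the cited results are for linear equations).

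However, there are two concrete gaps. First, in your existence argument the passage to the limit in the drift term is not available when $\sigma=0$. The hypotheses of the theorem give no spatial continuity of $b(\nu,\cdot)$ --- it is only locally bounded and measurable in $\theta$ --- so integrating $b(\nu_s,\cdot)$ against the weakly converging measures $\nu^{\epsilon}_s$ does not pass to the limit in general. Your fix (uniform local bounds and $L^1_{\textnormal{loc}}$ convergence of densities) is exactly the mechanism that requires nondegenerate noise: it rests on parabolic estimates of the type in \cite{bogachev2015fokker}[Cor.\ 8.2.2], which are uniform in $\epsilon$ only because $\sigma>0$. As written, your construction therefore proves existence only for $\sigma>0$, whereas the theorem (and the paper's use of it, e.g.\ in Theorem \ref{thm:apriori_grad_flow} with $\sigma=0$) needs the degenerate case as well.

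Second, in the $\sigma>0$, $p\ge 4$ uniqueness case, invoking a \emph{uniqueness} criterion for the linear equation with drift frozen at a competing solution cannot close the argument: if $\nu$ and $\nu'$ are two solutions of \eqref{eq:gradient_flow_Cauchy_proof}, then $\nu$ solves the linear equation with drift $b(\nu_t,\cdot)$ and $\nu'$ solves a \emph{different} linear equation with drift $b(\nu'_t,\cdot)$, so linear uniqueness compares neither with the other. What your Gronwall loop actually requires is a quantitative \emph{stability} estimate between probability solutions of two linear Fokker--Planck--Kolmogorov equations with different drifts, of the form $W_1^2(\nu_t,\nu'_t)\lesssim \int_0^t\int_{\bbR^d}|b(\nu_s,\theta)-b(\nu'_s,\theta)|^2\,\nu'_s(d\theta)\,ds$, obtained via relative-entropy/Kantorovich perturbation bounds; only then does the $W_1$-Lipschitz dependence of $b$ on the measure yield $W_1^2(\nu_t,\nu'_t)\lesssim\int_0^t W_1^2(\nu_s,\nu'_s)\,ds$. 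That stability estimate, together with the density and integrability hypotheses it needs, is precisely the content of \cite{manita2015uniqueness}[Thm.\ 3.1], so your sketch presupposes the cited theorem rather than supplying an independent proof of it.
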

\begin{proof}[Proof of Theorem~\ref{thm:wp_and_prop}.]
	By \cite{funaki1984certain}[Thm.\ 2.1], there exists a measure-valued solution of \eqref{eq:gradient_flow_Cauchy_proof} 
	(see, also, \cite{manita2014nonlinear}[Thm.\ 1.1] or~\cite{hammersley2021mckean}[Thm.\ 2.10]).
	The continuity in time of the solution in $\clP_p(\bbR^d)$ follows from the weak continuity in time  and~\cite{villani2009optimal}[Thm.\ 6.9]. 
	
	Clearly, $\nu$ is a solution of the linear equation given by
	$$
	\partial_t \mu_t = L_{\nu_t}^*\mu_t\,,\,\,\, t\in (0,\infty)\,,\,\,\, \mu|_{t=0}=\nu_0\,.
	$$
	Thus, if $\sigma>0$ then, the statements i)-vi) hold by the references given in the statement of the theorem.
	
	If~\eqref{eq:lip_in_space_for_ext_uniq_thm} holds, then assumptions DH1-DH4 of \cite{manita2015uniqueness}[Thm.\ 4.4] hold (i.e., taking the functions $W, V, U$ and $G$ in DH1-DH4 to be $W(\theta)=1$, $V(\theta)=1+|\theta|^k$, $U(\theta)=\sqrt{1+|\theta|^2}$ (not our potential $U$), and $G(u)=u$), which implies that the solution of~\eqref{eq:gradient_flow_Cauchy_proof} is unique.
	If $\sigma>0$ and $p\geq 4$, then the assumptions H1-H4 of \cite{manita2015uniqueness}[Thm.\ 3.1, Ex.\ 3.2] hold,  which implies that the solution of~\eqref{eq:gradient_flow_Cauchy_proof} is unique.
\end{proof}

The following lemma indicates sufficient conditions on the activation function $f$ and potential $U$ to satisfy the assumptions of Theorem \ref{thm:wp_and_prop}.

\begin{corollary}\label{cor:b_growth_and_Lip}
	Define $b_{\sigma}:\clP_1(\bbR^p)\times \bbR^d\rightarrow \bbR^d$ by $b_{\sigma}(\nu,\theta) = \nabla \frac{\delta J^{\tau,0}}{\delta \nu}(\nu,\theta) - \frac{\sigma^2}{2}\nabla U(\theta)$. 
	\begin{enumerate}[i)]
		\item If $f\in \clA_1$ and Assumption \ref{asm:U_lin_growth} holds, then  Theorem \ref{thm:bnd_reg_Lion_deriv} implies that all $\nu,\nu'\in \mathcal{P}_1(\bbR^d)$ and $\theta\in \bbR^d$,
		\begin{equation}\label{ineq:b_growth}
			|b_{\sigma}(\nu,\theta)|\le C_1 + \frac{\sigma^2}{2}|\nabla U(\theta)|\le C_1+ \frac{\sigma^2 C_U}{2}(1+|\theta|)
		\end{equation}
		\begin{equation}\label{ineq:b_lip_measure}
			\textrm{and} \quad |b_{\sigma}(\nu',\theta)-b_{\sigma}(\nu,\theta)| \le L W_1(\nu',\nu)\le LW_2(\nu',\nu)\,.
		\end{equation}
		\item  If $f\in \clA_2$ and Assumption \ref{asm:GradU_Lip} holds, then   Theorem \ref{thm:bnd_reg_Lion_deriv} implies that for all $\nu\in \mathcal{P}_1(\bbR^d)$ and $\theta,\theta'\in \bbR^d$,
		\begin{equation}\label{ineq:b_lip_theta}
			|b_{\sigma}(\nu,\theta')-b_{\sigma}(\nu,\theta)|\le \left(C_2+\frac{L_U\sigma^2}{2}\right)|\theta' - \theta|\,.
		\end{equation}
		\item If $f\in \clA_4$ and $U\in C^4(\bbR^d)$, then by Theorem \ref{thm:bnd_reg_Lion_deriv},  $b_{\sigma}(\nu)\in C^3(\bbR^d)$ for all $\nu\in \mathcal{P}_1(\bbR^d)$.
	\end{enumerate}
	Therefore, the relevant conclusions of Theorem~\ref{thm:wp_and_prop} hold.
\end{corollary}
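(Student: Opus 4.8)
The plan is to read off all three itemized estimates directly from Theorem~\ref{thm:bnd_reg_Lion_deriv} together with the growth and Lipschitz hypotheses on $U$, and then to observe that items i)--iii) furnish, essentially verbatim, the constants $C',L',L''$ and the $C^3$-regularity demanded in the hypotheses of Theorem~\ref{thm:wp_and_prop}. The key structural observation is that the $\nu$-dependence of $b_{\sigma}$ lives entirely in the term $\nabla\frac{\delta J^{\tau,0}}{\delta\nu}$, while its $\theta$-dependence splits cleanly between that term and $-\frac{\sigma^2}{2}\nabla U$; consequently each bound follows from a single triangle-inequality decomposition applied to the relevant estimate of Theorem~\ref{thm:bnd_reg_Lion_deriv}.

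For the growth estimate in i), I would write
\[
|b_{\sigma}(\nu,\theta)|\le \left|\nabla\frac{\delta J^{\tau,0}}{\delta\nu}(\nu,\theta)\right|+\frac{\sigma^2}{2}|\nabla U(\theta)|,
\]
bound the first term by $C_1$ via \eqref{ineq:bound_J0} with $k=1$ (which requires $f\in\clA_1$), and the second by Assumption~\ref{asm:U_lin_growth}, obtaining $C_1+\frac{\sigma^2 C_U}{2}(1+|\theta|)$. For the $\nu$-Lipschitz estimate the term $-\frac{\sigma^2}{2}\nabla U(\theta)$ is independent of $\nu$ and cancels in the difference $b_{\sigma}(\nu',\theta)-b_{\sigma}(\nu,\theta)$, so the third inequality of Theorem~\ref{thm:bnd_reg_Lion_deriv} gives the $L\,W_1$-bound, upgraded to $L\,W_2$ by \eqref{ineq:W1W2}. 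For the spatial-Lipschitz estimate in ii) I would split analogously,
\[
|b_{\sigma}(\nu,\theta')-b_{\sigma}(\nu,\theta)|\le\left|\nabla\frac{\delta J^{\tau,0}}{\delta\nu}(\nu,\theta')-\nabla\frac{\delta J^{\tau,0}}{\delta\nu}(\nu,\theta)\right|+\frac{\sigma^2}{2}|\nabla U(\theta')-\nabla U(\theta)|,
\]
bound the second summand by $\frac{\sigma^2 L_U}{2}|\theta'-\theta|$ using Assumption~\ref{asm:GradU_Lip}, and for the first integrate $\nabla^2\frac{\delta J^{\tau,0}}{\delta\nu}(\nu,\cdot)$ along the segment joining $\theta$ and $\theta'$, invoking the uniform bound $C_2$ from \eqref{ineq:bound_J0} with $k=2$ (which requires $f\in\clA_2$); together these yield the constant $C_2+\frac{L_U\sigma^2}{2}$.

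For the regularity in iii), I would differentiate the representation of $\frac{\delta J^{\tau,0}}{\delta\nu}$ from Lemma~\ref{lem:func_deriv_J} up to four times in $\theta$ under the integral sign; this is justified because $f\in\clA_4$ makes $\nabla^j_\theta f$ bounded for $0\le j\le 4$, so dominated convergence applies and each $\nabla^k\frac{\delta J^{\tau,0}}{\delta\nu}$ with $k\le 4$ exists, is continuous, and is bounded by $C_k$. Hence $\frac{\delta J^{\tau,0}}{\delta\nu}(\nu,\cdot)\in C^4(\bbR^d)$, so $\nabla\frac{\delta J^{\tau,0}}{\delta\nu}(\nu,\cdot)\in C^3(\bbR^d)$, and since $U\in C^4$ forces $\nabla U\in C^3$, we conclude $b_{\sigma}(\nu)\in C^3(\bbR^d)$. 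Matching constants then closes the argument: i) supplies the linear-growth constant $C'$ and the $W_1$-Lipschitz constant $L'$, ii) supplies the spatial-Lipschitz constant $L''$ of \eqref{eq:lip_in_space_for_ext_uniq_thm}, and iii) supplies the $C^3$-smoothness needed for property vi), so every hypothesis of Theorem~\ref{thm:wp_and_prop} is met and its conclusions transfer to the gradient flow \eqref{eq:gradient_flow_cauchy}. I expect the only genuinely non-automatic step to be the differentiation-under-the-integral justification in iii); everything else reduces to a triangle inequality. The ``hard part,'' then, is not analytic difficulty but bookkeeping---tracking each constant so that the abstract hypotheses $C',L',L''$ of Theorem~\ref{thm:wp_and_prop} are matched exactly.
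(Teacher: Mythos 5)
Your proposal is correct and follows essentially the same route as the paper, which states this corollary without a separate proof precisely because it reduces to the triangle-inequality decompositions you describe: the bound \eqref{ineq:bound_J0} with $k=1,2$, the $W_1$-Lipschitz estimate of Theorem~\ref{thm:bnd_reg_Lion_deriv} (upgraded to $W_2$ via \eqref{ineq:W1W2}), the hypotheses on $\nabla U$, and differentiation under the integral in the representation from Lemma~\ref{lem:func_deriv_J} for the $C^3$ claim. Your matching of the constants $C'$, $L'$, $L''$ and the smoothness hypothesis to the corresponding conditions of Theorem~\ref{thm:wp_and_prop} is exactly what the paper intends.
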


In the following theorem, we prove that the objective function $J^{\tau,\sigma}(\nu)= J^{\tau,0}(\nu)-\textnormal{KL}(\nu|e^{-U})$ is increasing along the gradient flow $(\nu_t)_{t\in \bbR_+}$.

\begin{theorem}\label{thm:increasing_objective}
	Let $f\in \clA_1$, Assumption \ref{asm:U_lin_growth} hold, and $\nu_0\in \clP_1(\bbR^d)$ if $\sigma=0$, and $\nu_0\in \clP_2^{\textnormal{fe}}(\bbR^d)$ if $\sigma>0$. 
	Then for all $t\in \bbR_+,$
	\begin{equation}
		\label{eq:energy_identity}
		J^{\tau,\sigma}(\nu_t)= J^{\tau,\sigma}(\nu_0)+\int_0^t\int_{\mathbb{R}^d}\left|\nabla \frac{\delta J^{\tau,0}}{\delta \nu}(\nu_s,\theta) -\frac{\sigma^2}{2}\nabla \ln \frac{\nu_s(\theta)}{e^{-U(\theta)}}\right|^2\nu_{s}(d\theta) \,ds\,.
	\end{equation}
\end{theorem}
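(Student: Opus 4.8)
The plan is to obtain \eqref{eq:energy_identity} as the difference of two separate ``chain-rule'' identities---one for the value term $J^{\tau,0}(\nu_t)$ and one for the relative entropy $\mathrm{KL}(\nu_t|e^{-U})$---arranged so that, upon forming $J^{\tau,\sigma}=J^{\tau,0}-\frac{\sigma^2}{2}\mathrm{KL}(\cdot|e^{-U})$, the two contributions assemble into the square of the effective velocity field $v_s(\theta):=\nabla\frac{\delta J^{\tau,0}}{\delta \nu}(\nu_s,\theta)-\frac{\sigma^2}{2}\nabla\ln\frac{\nu_s(\theta)}{e^{-U(\theta)}}$. Note that $v_s$ is precisely the integrand of \eqref{eq:energy_identity}, and that with $b_\sigma$ as in Corollary \ref{cor:b_growth_and_Lip} one has $v_s=b_\sigma(\nu_s,\cdot)-\frac{\sigma^2}{2}\nabla\ln\nu_s$, the field appearing in the $C^1_b$-weak formulation iii) of Theorem \ref{thm:wp_and_prop}. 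By Corollary \ref{cor:b_growth_and_Lip} the hypotheses of Theorem \ref{thm:wp_and_prop} hold; when $\sigma>0$, $\nu_0\in\clP_2^{\textnormal{fe}}(\bbR^d)$ together with Lemma \ref{lem:relative_entropy} gives $\int|\ln\nu_0|\,d\nu_0<\infty$, so properties i)--v) are available. I would treat $\sigma=0$ and $\sigma>0$ separately, since only the latter carries an entropy term and the accompanying regularity.

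First I would prove the value identity $J^{\tau,0}(\nu_t)-J^{\tau,0}(\nu_0)=\int_0^t\int\nabla\frac{\delta J^{\tau,0}}{\delta \nu}(\nu_s,\cdot)\cdot v_s\,d\nu_s\,ds$. On a partition $0=s_0<\dots<s_N=t$ I would write each increment via the fundamental theorem of calculus \eqref{eq:lin_der_diff_form} as $\int_0^1\!\int\frac{\delta J^{\tau,0}}{\delta \nu}(\nu_{s_k}^r,\cdot)\,d(\nu_{s_{k+1}}-\nu_{s_k})\,dr$ with $\nu_{s_k}^r=\nu_{s_k}+r(\nu_{s_{k+1}}-\nu_{s_k})$, and split off the frozen test function $\frac{\delta J^{\tau,0}}{\delta \nu}(\nu_{s_k},\cdot)\in C^1_b$ (for $\sigma=0$, extending property iii) to $C^1_b$ via a standard cutoff using the moment bounds and $|\nabla\frac{\delta J^{\tau,0}}{\delta \nu}|\le C_1$). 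Property iii) turns the frozen part into $\int_{s_k}^{s_{k+1}}\!\int\nabla\frac{\delta J^{\tau,0}}{\delta \nu}(\nu_{s_k},\cdot)\cdot v_s\,d\nu_s\,ds$; the Lipschitz-in-$\nu$ bound of Theorem \ref{thm:bnd_reg_Lion_deriv} and uniform continuity of $s\mapsto\nu_s$ in $W_1$ make these sums converge to the claimed integral. The remainder is bounded by $\frac{L}{2}\sum_k W_1(\nu_{s_{k+1}},\nu_{s_k})^2$, again by Theorem \ref{thm:bnd_reg_Lion_deriv}; here I would use that $(\nu_s)$ has finite $W_1$-length on $[0,t]$---from $|\nabla\frac{\delta J^{\tau,0}}{\delta \nu}|\le C_1$ when $\sigma=0$, and from the time-integrated Fisher information in property iii) when $\sigma>0$---so that $\sum_k W_1^2\le(\max_k W_1)\cdot\mathrm{Length}\to0$ as the mesh vanishes. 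For $\sigma=0$ this identity is already \eqref{eq:energy_identity}.

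The hard part, and the main obstacle, is the entropy identity $\mathrm{KL}(\nu_t|e^{-U})-\mathrm{KL}(\nu_0|e^{-U})=\int_0^t\int\nabla\ln\frac{\nu_s}{e^{-U}}\cdot v_s\,d\nu_s\,ds$ for $\sigma>0$, since $\int\nu\ln\nu$ is only lower semicontinuous and $\ln\nu_s$ is neither bounded nor an admissible test function. I would work first on a compact subinterval $[t_1,t_2]\subset(0,\infty)$, where property iv) gives $|\ln\nu_s(\theta)|\le K(1+|\theta|^2)$, property iii) gives $\nu_s\in W^{1,1}(\bbR^d)$ with finite time-integrated Fisher information, and property v) gives $\int_{t_1}^{t_2}\!\int|\ln\nu_s|\,d\nu_s<\infty$. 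Decomposing $\mathrm{KL}=\int\nu_s\ln\nu_s+\int U\,d\nu_s$, I would apply property iii) to $C^1_b$ truncations $\phi_R$ of $\ln\nu_s+U$, integrate by parts using $\nu_s\nabla\ln\nu_s=\nabla\nu_s$, and let $R\to\infty$, with dominated convergence justified by the Gaussian, Fisher, and entropy bounds just cited together with the linear growth of $\nabla U$ (Assumption \ref{asm:U_lin_growth}) and uniform-in-time moment estimates. This reproduces the de Bruijn-type computation $-\int(\frac{\sigma^2}{2}\nabla\ln\nu_s-b_\sigma(\nu_s,\cdot))\cdot\nabla\ln\frac{\nu_s}{e^{-U}}\,d\nu_s=\int v_s\cdot\nabla\ln\frac{\nu_s}{e^{-U}}\,d\nu_s$, giving the identity on $[t_1,t_2]$; this is exactly where I extend the argument of \cite{bogachev2016distances} to non-smooth, non-compactly supported data, thereby avoiding property vi) (which would require $f\in\clA_4$).

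Finally I would pass to the endpoint $t_1\downarrow0$, which is the most delicate point: property v) holds with $t_1=0$ when $\nu_0\in\clP_2^{\textnormal{fe}}(\bbR^d)$, and combined with the $L^1$-convergence $\nu_{t_1}\to\nu_0$ from property ii), lower semicontinuity of $\mathrm{KL}(\cdot|e^{-U})$, and absolute continuity of the right-hand integral in its lower limit, this yields $\mathrm{KL}(\nu_{t_1}|e^{-U})\to\mathrm{KL}(\nu_0|e^{-U})$ and hence the entropy identity on all of $[0,t]$. Subtracting $\frac{\sigma^2}{2}$ times the entropy identity from the value identity of the second paragraph collects the integrand into $(\nabla\frac{\delta J^{\tau,0}}{\delta \nu}(\nu_s,\cdot)-\frac{\sigma^2}{2}\nabla\ln\frac{\nu_s}{e^{-U}})\cdot v_s=|v_s|^2$, which is precisely \eqref{eq:energy_identity}. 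I expect the entropy chain rule and, in particular, the continuity at $t=0$ to be the steps requiring the full strength of the Bogachev-type estimates rather than merely the weak formulation.
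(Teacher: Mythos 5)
Your architecture is the same as the paper's: establish a chain-rule identity for $J^{\tau,0}(\nu_t)$, establish one for $\mathrm{KL}(\nu_t|e^{-U})$, and subtract so that the integrands assemble into $|v_s|^2$. Your value identity is sound: the partition/telescoping argument with remainder $\tfrac{L}{2}\sum_k W_1(\nu_{s_{k+1}},\nu_{s_k})^2$ (via convexity of $W_1$, Kantorovich duality, and Theorem \ref{thm:bnd_reg_Lion_deriv}) is a legitimate variant of the paper's Step I, which instead differentiates $t\mapsto J^{\tau,0}(\nu_t)$ directly using Lemma \ref{lem:func_deriv_J}, the $C^1_b$ weak formulation, and the Lebesgue differentiation theorem; in particular your $\sigma=0$ case is complete. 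The entropy step for $\sigma>0$, however, has two genuine gaps.

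First, the mechanism you propose on $[t_1,t_2]$ --- ``apply property iii) to $C^1_b$ truncations $\phi_R$ of $\ln\nu_s+U$'' --- cannot work as stated, because the weak formulation iii) of Theorem \ref{thm:wp_and_prop} holds only for \emph{time-independent} test functions, whereas $\ln\nu_s$ moves with the flow. What is needed is a renormalization (chain-rule) theorem for the equation itself, i.e.\ an identity for $t\mapsto\int g(h_t)\psi e^{-U}\,d\theta$ with $h_t=e^{U}\nu_t$: this is \cite{bogachev2018distances}[Lem.\ 2.4(i)], proved by mollifying the \emph{solution}, writing the pointwise equation satisfied by the mollified ratio, applying the classical chain rule to $g$ of it, and passing to the limit --- followed in the paper by the approximation of $g(x)=x\ln x-x$ by the truncations $g_{k,m}$ with bounded derivatives and by cutoffs $\psi_n$. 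No choice of fixed test functions in iii) substitutes for this; if you instead freeze $\ln\nu_{s_k}$ on a partition (mimicking your value-identity argument), the telescoping error is $\sum_k\mathrm{KL}(\nu_{s_{k+1}}|\nu_{s_k})$, and showing this vanishes requires precisely the time-regularity of $\ln\nu_s$ that is unavailable. Citing an ``extension of \cite{bogachev2016distances}'' names the right tool but does not supply the missing mechanism.

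Second, the endpoint $t_1\downarrow 0$. Lower semicontinuity of $\mathrm{KL}(\cdot|e^{-U})$ together with integrability of the right-hand side down to $s=0$ gives only that $\lim_{t_1\downarrow 0}\mathrm{KL}(\nu_{t_1}|e^{-U})$ exists and is $\ge \mathrm{KL}(\nu_0|e^{-U})$; nothing you invoke rules out an upward entropy jump at $t=0$. With only this inequality, your entropy ``identity'' degrades to $\mathrm{KL}(\nu_t|e^{-U})\ge \mathrm{KL}(\nu_0|e^{-U})+\int_0^t\cdots$, and after subtraction you obtain only ``$\le$'' in \eqref{eq:energy_identity} --- which is not the theorem, and does not even yield monotonicity of $J^{\tau,\sigma}$ along the flow. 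The paper avoids this through the order of limits: for the truncated entropies $g_{k,m}(h_t)$, whose derivatives are bounded, continuity at $t=0$ follows from the $L^1$-convergence $\nu_t\to\nu_0$ of Theorem \ref{thm:wp_and_prop}(ii) via a Taylor estimate, and only afterwards is the truncation removed at each fixed $t$ using the entropy bound coming from property (v) (valid with $t_1=0$ since $\nu_0\in\clP_2^{\textnormal{fe}}(\bbR^d)$). Your proof would need to be restructured in the same way: truncate first, pass $t_1\downarrow 0$, then remove the truncation.
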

If  we assume further that  $f\in \clA_4$, $U\in C^4(\bbR^d)$, and $\nu\in C^2_c(\bbR^d)$, then \eqref{eq:energy_identity} can be proved through classical considerations using Corollary~\ref{cor:b_growth_and_Lip} and Theorem \ref{thm:wp_and_prop} (iiii-vi) and the fact that \eqref{eq:gradient_flow_Cauchy_proof} holds classically for all $t\in \bbR_+$ (i.e., including $t=0$), so one can appeal to the fundamental theorem of calculus. We omit the proof of this (see, for example, the proof of \cite{otto2000generalization}[Lem.\ 1]).
\begin{proof}[Proof of Theorem~\ref{thm:increasing_objective}] 
	We divide the proof into two steps. In the first step, we apply Lemma \ref{lem:func_deriv_J} to compute $\frac{d}{dt}J^{\tau,0}(\nu_t)$, and in the second step, we use ideas from \cite{bogachev2018distances}[Thm.\ 1.1] to  compute $\frac{d}{dt}\textnormal{KL}(\nu_t|e^{-U})$. 
	For brevity of notation, we drop the $\theta$ dependence in all integrands throughout the proof. Define $b:\clP_1(\bbR^d)\times \bbR^d\rightarrow \bbR^d$ by $b(\nu,\theta) = \nabla \frac{\delta J^{\tau,0}}{\delta \nu}(\nu,\theta) - \frac{\sigma^2}{2} \nabla U(\theta).$

	\
	
	\noindent\textbf{Step I.}
	By Corollary~\ref{cor:b_growth_and_Lip}, there exists a measure-valued solution $\nu\in C(\bbR_+;\clP_1(\bbR^d))$ of \eqref{eq:gradient_flow_Cauchy_proof}.
	Using Lemma~\ref{lem:func_deriv_J}, we find that for all $t\in \bbR_+$,
	\begin{align*}
		\frac{d}{dt}J^{\tau,0}(\nu_t)=\lim_{h\rightarrow 0} \frac{J^{\tau,0}(\nu_{t+h})-J^{\tau,0}(\nu_t)}{h}=\lim_{h\rightarrow 0}\int_0^1\left[\int_{\mathbb{R}^d} \frac{\delta J^{\tau,0}}{\delta \nu}(\nu^{\varepsilon,h}_t)\frac{1}{h}(\nu_{t+h}-\nu_t)(d\theta)\right]\,d\varepsilon\,,
	\end{align*}
	where $\nu^{\varepsilon,h}_t := \nu_t + \varepsilon(\nu_{t+h} -\nu_t)$ and $h\ge -t$.  In the case $t=0$, we understand the derivative as a right-hand derivative. Our aim to calculate the limit on the right-hand-side. Using Lemma \ref{cor:b_growth_and_Lip} and Theorem \ref{thm:wp_and_prop}(iii) in the case $\sigma>0$ and a simple density argument in the case $\sigma=0$, we find that for all $\phi \in C^1_b(\mathbb{R}^d)$ and $t_1,t_2\in\bbR_+$,
	$$
	\int_{\mathbb{R}^d} \phi \, \nu_{t_2}(d\theta)=\int_{\bbR^d}\phi \,\nu_{t_1}(d\theta)+ \int_{t_1}^{t_2}\int_{\mathbb{R}^d}\nabla\phi \cdot \left(b(\nu_s)-\frac{\sigma^2}{2}\nabla \ln \nu_s\right)\nu_s(d\theta) \,ds \,.
	$$
	Owing to Theorem \ref{thm:bnd_reg_Lion_deriv}, we have that $\frac{\delta J^{\tau,0}}{\delta \nu}(\nu^{\varepsilon,h}_t)\in C_b^1(\bbR^d)$ for all $t,\varepsilon,$ and $h$.
	Thus, for all $t\in \bbR_+$,
	$$
	\int_{\mathbb{R}^d}  \frac{\delta J^{\tau,0}}{\delta \nu}(\nu^{\varepsilon,h}_t) \frac1h(\nu_{t+h}-\nu_t)(d\theta) = \frac{1}{h}\int_t^{t+h}\int_{\mathbb{R}^d}\nabla  \frac{\delta J^{\tau,0}}{\delta \nu}(\nu^{\varepsilon,h}_s)\cdot \left(b(\nu_s)-\frac{\sigma^2}{2}\nabla \ln \nu_s\right)\nu_s(d\theta)\,ds\,.
	$$
	Owing to Corollary~\ref{cor:b_growth_and_Lip} for all $t\in \bbR_+$, $\lim_{h\rightarrow 0}\nu_{t+h}=\nu_t$ in $W_2$. 
	Thus, by Theorem \ref{thm:bnd_reg_Lion_deriv} and \eqref{ineq:W1W2}, we obtain that for all  $(\theta,t) \in \mathbb R^d \times \mathbb R_+$,
	$$
	\lim_{h\downarrow 0}\frac{\delta J^{\tau,0}}{\delta \nu}(\nu^{\varepsilon,h}_t,\theta)= \frac{\delta J^{\tau,0}}{\delta \nu}(\nu_t,\theta)\,.
	$$ 
	Therefore, by virtue of the Lebesgue differentiation theorem and Lebesgue dominated convergence theorem, we get that for all $t\in \bbR_+$,
	$$
	\frac{d}{dt}J^{\tau,0}(\nu_t) 
	= \int_{\mathbb{R}^d}\nabla  \frac{\delta J^{\tau,0}}{\delta \nu}(\nu_t)\cdot \left(\nabla \frac{\delta J^{\tau,0}}{\delta \nu}(\nu_t) -\frac{\sigma^2}{2}\nabla \ln \frac{\nu_t}{e^{-U}}\right)\nu_t(d\theta)\,.
	$$
	An application of the fundamental theorem of calculus then implies that for all $t\in \bbR_+,$ 
	\begin{equation}
		\label{eq:jtau0withsigma}
		J^{\tau,0}(\nu_t) =J^{\tau,0}(\nu_0)+ \int_0^t\int_{\mathbb{R}^d}\nabla  \frac{\delta J^{\tau,0}}{\delta \nu}(\nu_s)\cdot \left(\nabla \frac{\delta J^{\tau,0}}{\delta \nu}(\nu_s) -\frac{\sigma^2}{2}\nabla \ln \frac{\nu_s}{e^{-U}}\right)\nu_s(d\theta)\,ds\,.	
	\end{equation}
	The case $\sigma=0$ of the theorem has now been proved (i.e. part i)). 
	
	\
	
	\textbf{Step II.}
	From now on, we assume $\sigma > 0$ so that $\nu \in C(\bbR_+;\mathcal P_{\text{ac}}^2(\mathbb R^d))$. 
	We aim to show that 
	$$
	\textnormal{KL}(\nu_t|e^{-U}) 
	= \textnormal{KL}(\nu_0|e^{-U})+\int_0^t \int_{\mathbb{R}^d}\nabla \ln  \frac{\nu_s}{e^{-U}}\cdot\left(\nabla \frac{\delta J^{\tau,0}}{\delta \nu}(\nu_s) -\frac{\sigma^2}{2}\nabla \ln \frac{\nu_s}{e^{-U}}\right)\nu_s \,d\theta\,ds\,.
	$$
	Once we have shown this, multiplying it by $\sigma^2/2$ and subtracting from~\eqref{eq:jtau0withsigma} we obtain~\eqref{eq:energy_identity}.
	Letting  $h=e^{U}\nu$, we find that for all $t\in \bbR_+$,
	$$
	\textnormal{KL}(\nu_t|e^{-U})=\int_{\bbR^d} \nu_t\ln \frac{\nu_t}{e^{-U}}d\theta =\int_{\bbR^d} h_t\ln h_t e^{-U}d\theta\,.
	$$
	In terms of $h$, we aim to show that 
	\begin{equation}
		\label{eq:entropy_identity_h}
		\int_{\bbR^d}h_t\ln h_te^{-U}d\theta = \int_{\bbR^d}h_0\ln h_0e^{-U}d\theta
		+ \int_0^t \int_{\mathbb{R}^d}\nabla \ln h_s\cdot\left(\nabla \frac{\delta J^{\tau,0}}{\delta \nu}(\nu_s) -\frac{\sigma^2}{2}\nabla \ln h_s\right)e^{-U}h_s\,d\theta\,ds\,.
	\end{equation}
	
	Let us establish some preliminary bounds. 
	By Corollary~\ref{cor:b_growth_and_Lip}, Theorem~\ref{thm:wp_and_prop} (iii) and (v) and Lemma \ref{lem:relative_entropy}, we have 
	\begin{align}
		\int_0^t \int_{\bbR^d}|\nabla \ln h_s|^2e^{-U}h_s\,d\theta \,ds
		& =\int_0^t \int_{\bbR^d}|\nabla U + \nabla \ln \nu_s|^2\nu_s\,d\theta \,ds \notag\\
		&\le 2\int_0^t \int_{\bbR^d}\left(|\nabla U|^2+|\nabla \ln \nu_s|^2\right)\nu_s\,d\theta \,ds \label{ineq:log_Sob_h}\\
		&\le \int_0^t \int_{\bbR^d}\left(2C^2_U(1+|\theta|)^2+|\nabla \ln \nu_s|^2\right)\nu_s\,d\theta \,ds<\infty\,,\notag\\
		\int_0^t\int_{\bbR^d} h_s|\ln h_s| e^{-U}\,d\theta \,ds&=\int_0^t \nu_s \left(\left|\ln \nu_s\right| +|U|\right)\,d\theta \,ds \notag\\
		&\le \int_0^t\int_{\bbR^d} \nu_s \left(\left|\ln \nu_s\right| +C_U(1+|\theta|^2)\right)\,d\theta \,ds<\infty\,.\label{ineq:entropy_h}
	\end{align}
	
	Notice that $\rho = e^{-U}$ satisfies for all $(\theta,t)\in \bbR^d\times \bbR_+$,
	$$
	\partial_t \rho_t=0=\frac{\sigma^2}{2}\Delta \rho_t + \frac{\sigma^2}{2} \nabla \cdot (\nabla U \rho_t)\,.
	$$
	Applying \cite{bogachev2018distances}[Lem.\  2.4(i)], 
	we find that for all $g\in C^2(\bbR^d)$, $\psi\in C_c^\infty(\bbR^d)$, and $t\ge \tau>0$,
	\footnote{
		In \cite{bogachev2018distances}, the authors introduce a probabilistic approximation to the identity $(\omega_{\varepsilon})_{\varepsilon\in (0,1)}$ and the approximation $h_{\varepsilon}=\omega_{\varepsilon}\ast \nu/ \omega_{\varepsilon}\ast U$, $\varepsilon \in (0,1)$. 
		Then they write down the  equation for $h_{\varepsilon}$, which, in particular, has smooth coefficients and holds pointwise. At this point, they work classically and easily derive~\eqref{eq:g_cr_before_passing}. 
		To pass to the limit as $\varepsilon\downarrow 0$, they appeal to the fact that $\omega_{\varepsilon}\ast \nu$  to $\nu$ uniformly on $\operatorname{supp} \psi \times [\tau,t]$  and  $\nabla \omega_{\varepsilon}\ast \nu \rightarrow \nabla \nu$ in $L^2(\operatorname{supp} \psi \times [\tau,t])$ since $\nu$ is locally H\"older and Sobolev.}
	\begin{align*}
		\int_{\bbR^d}g(h_t)&\psi e^{-U} \,d\theta  + \frac{\sigma^2}{2}\int_{\tau}^t\int_{\bbR^d}|\nabla h_s|^2 g''(h_s)\psi e^{-U}\,d\theta \,ds \\
		&\quad = \int_{\bbR^d}g(h_{\tau})\psi e^{-U} \,d\theta + \int_{\tau}^t\int_{\bbR^d}g(h_s)\left(\frac{\sigma^2}{2}\Delta \psi+ \frac{\sigma^2}{2}  \nabla U\cdot \nabla \psi\right) e^{-U} \,d\theta \,ds \\
		&\quad + \int_{\tau}^t \int_{\bbR^d}\left[g''(h_s)\nabla h_s\cdot \nabla \frac{\delta J^{\tau, 0}}{\delta \nu}(\nu_s) \psi +  g'(h_s)\nabla \psi \cdot \nabla \frac{\delta J^{\tau, 0}}{\delta \nu}(\nu_s)\right] h_se^{-U}\,d\theta \,ds\,.
	\end{align*}
	Using the divergence theorem, the identity $ h\nabla \ln h = \nabla h$ and Theorem \ref{thm:wp_and_prop} (iii), we find that for all $t\ge \tau>0$,
	\begin{equation}
		\begin{aligned}\label{eq:g_cr_before_passing}
			\int_{\bbR^d}g(h_t)\psi e^{-U} \,d\theta &= \int_{\bbR^d}g(h_{\tau})\psi e^{-U} \,d\theta+ \int_{\tau}^t \int_{\bbR^d}g''(h_s) \nabla h_s \cdot \left(\nabla \frac{\delta J^{\tau, 0}}{\delta \nu}(\nu_s)-\frac{\sigma^2}{2}\nabla \ln h_s\right)\psi h_se^{-U}\,d\theta \,ds\\
			&\quad + \int_{\tau}^t \int_{\bbR^d}g'(h_s)\nabla \psi\left(\nabla \frac{\delta J^{\tau, 0}}{\delta \nu}(\nu_s)-\frac{\sigma^2}{2}\nabla \ln h_s\right)  h_se^{-U}\,d\theta \,ds\,.
		\end{aligned}
	\end{equation}
	
	Consider $g\in C^1(\bbR_+)$ such that $g'\in C^0_b(\bbR_+)$, $g'$ is continuously differentiable except at a finite number of points $D\subset \bbR_+$, and $\sup_{x\in \bbR_+\setminus D}xg''(x)<\infty$. The functions ~\eqref{eq:gkm} we  consider below  satisfy this assumption. 
	Mollifying $g$, we  obtain a sequence $\{g_n\}_{n\in \bbN}\in C^\infty(\bbR_+)$ such that $g_n\rightarrow g$ and $g_n'\rightarrow g'$ pointwise on $\bbR_+$ as $n\rightarrow \infty$, $g_n''\rightarrow g''$ pointwise on $\bbR_+ \setminus D$ as $n\rightarrow \infty$,  $\sup_{n\in \bbN, x\in \bbR_+} |g'_n(x)|<\infty$, and $\sup_{n\in \bbN, x\in \bbR_+\setminus D}x|g''_n(x)|<\infty$. Let $\psi \in C_c^\infty(\bbR^d)$ be such that $\psi\ge 0$, $\psi=1$ for $|\theta|<1$ and $\psi=0$ for $|\theta|\ge 2$. For $n\in \bbN$, define $\psi_n \in C_c^\infty(\bbR^d)$ by  $\psi_n(x)=\psi(x/n)$. By \eqref{eq:g_cr_before_passing} and the fact that the Lebesgue measure of the set $\{h_t \in D\}$ is zero by \cite{kinderlehrer2000introduction}[Ch.\ 2, Lem. A.4], we have that for all $n\in \bbN$ and  $t\ge \tau>0$, 
	\begin{align*}
		\int_{\bbR^d}g_n(h_t)\psi_n e^{-U} \,d\theta &= \int_{\bbR^d}g_n(h_{\tau})\psi_n e^{-U} \,d\theta\\
		&\quad + \int_{\tau}^t \int_{h_s\not \in D}g''_n(h_s) \nabla h_s \cdot \left(\nabla \frac{\delta J^{\tau, 0}}{\delta \nu}(\nu_s)-\frac{\sigma^2}{2}\nabla \ln h_s\right)\psi_n h_se^{-U}\,d\theta \,ds\\
		&\quad + \int_{\tau}^t \int_{\bbR^d}g'_n(h_s)  \nabla \psi_n \left(\nabla \frac{\delta J^{\tau, 0}}{\delta \nu}(\nu_s)-\frac{\sigma^2}{2}\nabla \ln h_s\right) h_se^{-U}\,d\theta \,ds\,.
	\end{align*}
	For all $(\theta, t)\in \bbR^d\times \bbR_+$, we have 
	\begin{align*}
		|g_n(h)|e^{-U}&\le \sup_{n\in \bbN}|g_n(0)|e^{-U} + \sup_{n\in \bbN, x\in \bbR_+}|g'_n(x)|he^{-U},\\
		\mathbf{1}_{h\not \in D}g''_n(h)|\nabla h|\left|\nabla \frac{\delta J^{\tau, 0}}{\delta \nu}(\nu)-\frac{\sigma^2}{2}\nabla \ln h\right|he^{-U}&\le \sup_{n\in \bbN, x\in \bbR_+\setminus D}|xg_n''(x)|\left(C_1|\nabla h|e^{-U}+\frac{\sigma^2}{2}|\nabla \ln h|^2he^{-U}\right),\\
		|g_n'(h)| \left|\nabla \frac{\delta J^{\tau, 0}}{\delta \nu}(\nu)-\frac{\sigma^2}{2}\nabla \ln h\right|  he^{-U}&\le \sup_{n\in \bbN, x\in \bbR_+}|g'_n(x)| \left(C_1 +\frac{\sigma^2}{2}|\nabla \ln h |h e^{-U} \right)\,.
	\end{align*}
	Owing to \eqref{ineq:log_Sob_h}, we have that for all $t\in \bbR_+$,
	\[
	\int_0^t\int_{\bbR^d}|\nabla \ln h_s|h_se^{-U} \,d\theta \,ds  \le \sqrt{t}\left(\int_0^t\int_{\bbR^d}|\nabla \ln h_s|^2h_se^{-U} \,d\theta \,ds \right)^{1/2}<\infty
	\]
	and
	\[
	\int_0^t\int_{\bbR^d}|\nabla h_s|e^{-U} \,d\theta \,ds =\int_0^t\int_{\bbR^d}|\nabla \ln h_s|h_se^{-U} \,d\theta \,ds<\infty\,.
	\]
	Applying the dominated convergence theorem to pass to the limit as $n\rightarrow \infty$, we obtain that for all $t\ge \tau>0$,
	\begin{align*}
		\int_{\bbR^d}g(h_t) e^{-U} \,d\theta &= \int_{\bbR^d}g(h_{\tau}) e^{-U} \,d\theta
		+ \int_{\tau}^t \int_{h_s\not \in D} g''(h_s) \nabla h_s \cdot\left(\nabla \frac{\delta J^{\tau, 0}}{\delta \nu}(\nu_s)-\frac{\sigma^2}{2}\nabla \ln h_s\right) h_s e^{-U}\,d\theta \,ds\,.
	\end{align*}
	By Taylor's theorem and the fact that $g'\in C^0_b(\bbR^d)$, for all $t\in \bbR_+$, we have
	\begin{align*}
		\left|\int_{\bbR^d}g(h_t) e^{-U}d\theta - \int_{\bbR^d}g(h_0) e^{-U}d\theta\right| &\le \int_0^1\int_{\bbR^d}|g'(h_0+\varepsilon (h_t-h_0))| |h_t-h_0| e^{-U}\,d\varepsilon \,d\theta \\
		&\le |g'|_{L^\infty}\int_{\bbR^d} |\nu_t-\nu_0| \,d\theta\,,
	\end{align*}
	which tends to zero as $t\downarrow 0$ by Corollary~\ref{cor:b_growth_and_Lip} and Theorem~\ref{thm:wp_and_prop} (i). 
	Therefore, for all $t\in \bbR_+$,
	\begin{equation}\label{eq:g_cr}
		\begin{aligned}
			\int_{\bbR^d}g(h_t) e^{-U} d\theta &= \int_{\bbR^d}g(h_{0}) e^{-U} d\theta
			+ \int_{0}^t \int_{{h_s\not \in D}}g''(h_s)\nabla h_s \cdot \left(\nabla \frac{\delta J^{\tau, 0}}{\delta \nu}(\nu_s)-\frac{\sigma^2}{2}\nabla \ln h_s\right) h_s e^{-U} \,d\theta \,ds\,.\\
		\end{aligned}
	\end{equation}
	We would like to take $g:\bbR_+\rightarrow \bbR$ given by $g(x)=x\ln x -x$ in \eqref{eq:g_cr}, but $g'\not \in C^0_b(\bbR_+)$. 
	Nevertheless, if we could, then for all $t\in \bbR_+$,
	\begin{align*}
		\int_{\bbR^d}(h_t\ln h_t-h_t) e^{-U} d\theta &= \int_{\bbR^d}(h_0\ln h_0-h_0) e^{-U} d\theta\\
		&\quad + \int_{0}^t \int_{\bbR^d}\nabla \ln h_s\cdot \left(\nabla \frac{\delta J^{\tau, 0}}{\delta \nu}(\nu_s)-\frac{\sigma^2}{2}\nabla \ln h_s\right)h_s e^{-U} \,d\theta \,ds\,,
	\end{align*}
	which yields~\eqref{eq:entropy_identity_h}.
	
	Instead, we will approximate $g(x)=x\ln x -x$ and pass to the limit.  Towards this end, as in the proof of ~\cite{bogachev2018distances}[Thm.\ 1.1], we consider the sequence $\{g_{k,m}\}_{k,m\in \bbN}\subset C^1(\bbR_+)$ defined by
	\begin{equation*}
		g_{k,m}(x)=\begin{cases}
			-x \ln k & x\le k^{-1}\,,\\
			x\ln x - x + k^{-1} & k^{-1}< x \le m\,, \\
			x \ln m - m + k^{-1} & x\ge m\,.
		\end{cases}
	\end{equation*}
	Clearly,
	$$
	g'_{k,m}(x)=\begin{cases}
		- \ln k & x\le k^{-1}\,,\\ \ln x  & k^{-1}< x \le m\,, \\
		\ln m  & x\ge m\,,
	\end{cases}
	\quad \textnormal{and} \quad 
	g''_{k,m}(x)=\begin{cases}
		0 & x\le  k^{-1}\,,\\ 
		x^{-1}  & k^{-1}< x < m \,,\\
		0  & x\ge  m\,,
	\end{cases}
	$$
	from which we deduce that $g'_{k,m}\in C^0_b(\bbR_+)$ and $\sup_{x\in \bbR_+-\{k,m\}} x g''_{k,m}(x)=1$. Applying \eqref{eq:g_cr}, we find that for all $t\in \bbR_+$,
	\begin{equation}\label{eq:gkm}
		\begin{aligned}
			\int_{\bbR^d} g_{k,m}(h_t)  e^{-U}d\theta &=\int_{\bbR^d} g_{k,m}(h_0) e^{-U} d\theta  \\
			&\quad +\int_0^t\int_{k^{-1}<h_s<m}  \nabla \ln h_s  \cdot \left(\nabla \frac{\delta J^{\tau, 0}}{\delta \nu}(\nu_s)   -\frac{\sigma^2}{2}\nabla \ln h_s\right) h_s e^{-U}\,d\theta \,ds\,.
		\end{aligned}
	\end{equation}
	We will now pass to the limit each term of \eqref{eq:gkm} as $k,m\rightarrow \infty$. For all $k,m\in \bbN$ and $t\in \bbR_+$, we have
	\begin{align*}
		\int_{\bbR^d} g_{k,m}(h_t) e^{-U}d\theta
		& = - \ln k\int_{h_t\le k^{-1}}h_t e^{-U}d\theta + \int_{k^{-1}<h_t< m}\left(h_t\ln h_t - h_t\right) e^{-U}d\theta\\
		&\quad + \int_{m\le h_t}\left(h_t\ln m - m\right) e^{-U}d\theta +k^{-1}\int_{k^{-1}<h_t}e^{-U}d\theta\,.
	\end{align*}
	Since $h\ge 0$, for all $k,m\in \bbN$ with $m\ge 3$ (so that $m\ln m\ge m $) and $t\in \bbR_+$,
	$$
	\int_{\bbR^d} g_{k,m}(h_t) e^{-U}d\theta\le  \int_{k^{-1}<h_t}h_t\ln h_t e^{-U}d\theta-\int_{k^{-1}<h_t< m}h_t e^{-U}d\theta +k^{-1}\int_{k^{-1}<h_t}e^{-U}d\theta
	$$
	and
	\begin{align*}
		\int_{\bbR^d} g_{k,m}(h_t)e^{-U}d\theta&\ge -\ln k\int_{h_t\le k^{-1}}h_t e^{-U}d\theta + \int_{k^{-1}<h_t< m}\left(h_t\ln h_t - h_t\right) e^{-U}d\theta\,.
	\end{align*}
	The bound
	\[
	\mathbf{1}_{h_t\le k^{-1}}h_t\ln k\le k^{-1}\ln k \le  \sup_{x\ge 1} x^{-1}\ln x=e^{-1}\,, \quad \forall (\theta, t)\in \bbR^d\times \bbR_+\,,
	\]
	allows us to apply the dominated convergence theorem to obtain that for all $t\in \bbR_+$,
	$$
	\lim_{k\rightarrow \infty } \ln k\int_{h_t\le k^{-1}}h_t e^{-U}d\theta=0\,.
	$$
	Clearly, for all $t\in \bbR_+$,
	$$
	\lim_{k\rightarrow \infty}k^{-1}\int_{k^{-1}<h_t}e^{-U}d\theta=0\,.
	$$
	The bound \eqref{ineq:entropy_h} and another application of the dominated convergence theorem imply that for all $t\in \bbR_+$, 
	$$
	\lim_{k,m\rightarrow \infty}\int_{k^{-1}<|h_t|< m}\left(h_t\ln h_t - h_t\right) e^{-U}d\theta=\int_{\bbR^d}h_t\ln h_te^{-U}d\theta -1
	$$
	and 
	$$
	\lim_{k\rightarrow \infty}\int_{k^{-1}<|h_t|}h_t\ln h_t e^{-U}d\theta-\int_{k^{-1}<|h_t|< m}h_t e^{-U}d\theta =\int_{\bbR^d}h_t\ln h_te^{-U}d\theta -1\,.
	$$
	Therefore, for all $t\in \bbR_+$,
	$$
	\int_{\bbR^d}h_t\ln h_te^{-U}d\theta -1\le \lim_{k,m\rightarrow \infty}\int_{\bbR^d} g_{k,m}(h_t) e^{-U}d\theta\le  \int_{\bbR^d}h_t\ln h_te^{-U}d\theta -1\,,
	$$
	which yields for all $t\in \bbR_+$, 
	$$
	\lim_{k,m\rightarrow \infty}\int_{\bbR^d} g_{k,m}(h_t)  e^{-U}d\theta =\int_{\bbR^d}h_t\ln h_t e^{-U}d\theta-1\,.
	$$
	Using the bounds
	\[
	\nabla \ln h_t  \cdot \left(\nabla \frac{\delta J^{\tau, 0}}{\delta \nu}(\nu_t)   -\frac{\sigma^2}{2}\nabla \ln h_t\right) h_t e^{-U} \le \left(C_1 |\nabla \ln h_t| + \frac{\sigma^2}{2}|\nabla \ln h_t|^2\right) h_t e^{-U}\,, \quad \forall (\theta, t)\in \bbR^d\times \bbR_+\,,
	\]
	and~\eqref{ineq:log_Sob_h}, and the dominated convergence theorem, we find that for all $t\in \bbR_+$, 
	\begin{gather*}
		\lim_{k,m\rightarrow \infty}\int_0^t\int_{k^{-1}<h_s<m}  \nabla \ln h_s  \cdot \left(\nabla \frac{\delta J^{\tau, 0}}{\delta \nu}(\nu_s)   -\frac{\sigma^2}{2}\nabla \ln h_s\right) h_s e^{-U}\,d\theta \,ds\\
		=\int_0^t\int_{\bbR^d}  \nabla \ln h_s  \cdot \left(\nabla \frac{\delta J^{\tau, 0}}{\delta \nu}(\nu_s)   -\frac{\sigma^2}{2}\nabla \ln h_s\right) h_s e^{-U}\,d\theta \,ds\,.
	\end{gather*}
	Combining the above, we pass to the limit in every term of \eqref{eq:gkm} to obtain~\eqref{eq:entropy_identity_h}.
\end{proof}

\subsubsection{Proof of Theorem \ref{thm:cts_dep_flow} and moment estimates}\label{sec:proof_of_cts_dep}

\begin{lemma}[Moment estimates]\label{lem:moment_est}
	Let $f\in \clA_2$ and Assumptions \ref{asm:U_lin_growth}, \ref{asm:GradU_Lip}, and \ref{asm:GradU_diss} hold. Let $(\nu_t)_{t\ge 0}$ be the solution of \eqref{eq:gradient_flow_cauchy} with initial condition $\nu_0\in \clP_2(\bbR^d)$. Then for all $\ell>0$ and $t\in \bbR_+$,
	\begin{equation}\label{ineq:moment_est_par}
		\int_{\bbR^d}|\theta|^2\nu_t(d\theta) \le e^{-\alpha_{1,\ell} t}\int_{\bbR^d}|\theta|^2\nu_0(d\theta) +\frac{1}{\alpha_{1,\ell}}\left(\frac{1}{4\ell}\left(2C_1+ \sigma^2 C_U\right)+d\sigma^2\right)(1-e^{-\alpha_{1,\ell} t})\,,
	\end{equation}
	where $\alpha_{1,\ell}:= \sigma^2\kappa -2C_2-\ell$. Moreover, if $\sigma^2\kappa>2C_2$ and $\nu^{\ast}\in \clP_2(\bbR^d)$ is a solution of \eqref{eq:stationary_eq}, then for all $\ell>0$ such that $\alpha_{1,\ell}>0$,
	\begin{equation}\label{ineq:moment_est_ellip}
		\int_{\bbR^d}|\theta|^2\nu^{\ast}(d\theta) \le \frac{1}{\alpha_{1,\ell}} \left(\frac{1}{4\ell}\left(2C_1+ \sigma^2 C_U\right)+d\sigma^2\right)\,.
	\end{equation}
	If, in addition, $\nabla U(0)=0$, then for all $\ell>0$ and $t\in \bbR_+$,
	$$
	\int_{\bbR^d}|\theta|^2\nu_t(d\theta) \le e^{-\alpha_{2,\ell} t}\int_{\bbR^d}|\theta|^2\nu_0(d\theta) +\frac{1}{\alpha_{2,\ell}}\left(\frac{C_1}{2\ell}+d\sigma^2\right)(1-e^{-\alpha_{2,\ell} t})\,,
	$$
	where $\alpha_{2,\ell} := \sigma^2\kappa -\ell$. An analog of \eqref{ineq:moment_est_ellip} then holds for $\nu^{\ast}$ if $\sigma>0$.
\end{lemma}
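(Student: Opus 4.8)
The plan is to derive moment estimates by testing the gradient flow equation against the function $\phi(\theta) = |\theta|^2$ and using Itô's formula / the weak formulation, together with the dissipativity Assumption~\ref{asm:GradU_diss} to control the drift. The cleanest route is through the McKean--Vlasov SDE representation of Theorem~\ref{thm:MVSDE}: since $f\in\clA_2$ and Assumptions~\ref{asm:U_lin_growth} and~\ref{asm:GradU_Lip} hold, there is a solution $\theta_t$ with $\operatorname{Law}(\theta_t)=\nu_t$, and I can apply Itô's formula to $|\theta_t|^2$ and take expectations. This gives
\[
\frac{d}{dt}\bbE|\theta_t|^2 = 2\,\bbE\!\left[\theta_t\cdot\left(\nabla\frac{\delta J^{\tau,0}}{\delta\nu}(\nu_t,\theta_t)-\frac{\sigma^2}{2}\nabla U(\theta_t)\right)\right] + d\sigma^2\,.
\]

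First I would control the drift term. For the potential part, I write $\nabla U(\theta)=(\nabla U(\theta)-\nabla U(0))+\nabla U(0)$; dissipativity (with $\theta'=0$) yields $\theta\cdot(\nabla U(\theta)-\nabla U(0))\ge\kappa|\theta|^2$, and in the case $\nabla U(0)=0$ the correction vanishes, which is exactly why the sharper constants $\alpha_{2,\ell}$ appear in the last claim. For the $J^{\tau,0}$ part I use the bound $|\nabla\frac{\delta J^{\tau,0}}{\delta\nu}(\nu_t,\theta)|\le C_1$ from Theorem~\ref{thm:bnd_reg_Lion_deriv} (equation~\eqref{ineq:bound_J0} with $k=1$), giving $\theta\cdot\nabla\frac{\delta J^{\tau,0}}{\delta\nu}\le C_1|\theta|$. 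The key manoeuvre is then Young's inequality in the form $|\theta|\le \ell|\theta|^2+\tfrac{1}{4\ell}$ (and similarly for the $\sigma^2 C_U|\theta|$ term coming from $|\nabla U(\theta)|\le C_U(1+|\theta|)$ in the general case), which converts the linear-in-$|\theta|$ terms into a small multiple $\ell$ of $|\theta|^2$ plus a constant. Collecting everything produces a differential inequality
\[
\frac{d}{dt}\bbE|\theta_t|^2 \le -\alpha_{1,\ell}\,\bbE|\theta_t|^2 + \left(\tfrac{1}{4\ell}(2C_1+\sigma^2 C_U)+d\sigma^2\right)\,,
\]
with $\alpha_{1,\ell}=\sigma^2\kappa-2C_2-\ell$; applying Grönwall's inequality then gives~\eqref{ineq:moment_est_par}. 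The $\nabla U(0)=0$ variant follows identically, dropping the $\sigma^2 C_U/(4\ell)$ term and using the full coefficient $\sigma^2\kappa$ since no additive $\nabla U(0)$ term needs absorbing.

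For the stationary bounds~\eqref{ineq:moment_est_ellip}, I would take a stationary solution $\nu^\ast$ and argue that under $\sigma^2\kappa>2C_2$ (so that $\alpha_{1,\ell}>0$ for small $\ell$) the same Itô computation against $|\theta|^2$ has vanishing left-hand side, since $\nu^\ast$ is time-invariant; setting the derivative to zero in the differential inequality yields $\alpha_{1,\ell}\int|\theta|^2\nu^\ast(d\theta)\le\tfrac{1}{4\ell}(2C_1+\sigma^2 C_U)+d\sigma^2$, hence the claimed bound. Alternatively, one obtains~\eqref{ineq:moment_est_ellip} as the $t\to\infty$ limit of~\eqref{ineq:moment_est_par} once existence of a finite second moment is known.

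The main obstacle is justifying the application of Itô's formula to the unbounded test function $|\theta|^2$ and the interchange of differentiation and expectation — the drift has only linear growth, so a priori $\bbE|\theta_t|^2$ could be infinite for intermediate $t$, and one cannot naively differentiate. The standard remedy, which I would invoke, is to work first with a smooth cutoff $\phi_R(\theta)=|\theta|^2\chi(|\theta|/R)$ (or localize via stopping times $\tau_R=\inf\{t:|\theta_t|\ge R\}$), derive the inequality for the truncated functional where all integrals are manifestly finite, and then pass $R\to\infty$ using Fatou's lemma on the left-hand side together with the uniform-in-$R$ bound on the right. The linear-growth control of the drift from Theorem~\ref{thm:bnd_reg_Lion_deriv} guarantees the a priori finiteness needed to close this limiting argument, and $\nu_0\in\clP_2(\bbR^d)$ ensures the initial second moment is finite. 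Equivalently, one can run the entire computation at the level of the weak/measure-valued formulation of~\eqref{eq:gradient_flow_cauchy} by testing against $\phi_R$, which avoids the SDE altogether but requires the same localization.
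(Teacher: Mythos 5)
Your proposal is correct, and its skeleton is the same as the paper's: pass to the McKean--Vlasov representation of Theorem~\ref{thm:MVSDE}, apply It\^o's formula to $|\theta_t|^2$, control the drift by dissipativity plus Young's inequality, localize by stopping times to kill the martingale term, and get the stationary bound \eqref{ineq:moment_est_ellip} by viewing $\nu^{\ast}$ as a time-invariant solution of \eqref{eq:gradient_flow_cauchy} and sending $t\to\infty$ in \eqref{ineq:moment_est_par}. The one substantive difference is the drift estimate. The paper works with the full drift $b(\nu,\theta)=\nabla\frac{\delta J^{\tau,0}}{\delta\nu}(\nu,\theta)-\frac{\sigma^2}{2}\nabla U(\theta)$ and first proves the one-sided bound $2b(\nu,\theta)\cdot\theta\le 2b(\nu,0)\cdot\theta+(2C_2-\sigma^2\kappa)|\theta|^2$ (see \eqref{ineq:b_diss}--\eqref{ineq:b_one_sided_growth}), which consumes the $C_2$-Lipschitz property of $\nabla\frac{\delta J^{\tau,0}}{\delta\nu}$ in $\theta$; you instead bound the $J$-part uniformly by $C_1|\theta|$ via \eqref{ineq:bound_J0} with $k=1$ and apply Assumption~\ref{asm:GradU_diss} to $\nabla U$ alone, anchored at $0$ with the correction $|\nabla U(0)|\le C_U$ from Assumption~\ref{asm:U_lin_growth}. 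Carried out exactly, your decomposition gives the decay rate $\sigma^2\kappa-\ell$ in the general case as well, i.e., a strictly stronger inequality than \eqref{ineq:moment_est_par} (this is, in effect, the paper's argument for the $\nabla U(0)=0$ case applied uniformly). Your claim that the computation "produces" the rate $\alpha_{1,\ell}=\sigma^2\kappa-2C_2-\ell$ is therefore slightly imprecise --- $C_2$ never enters your estimate --- but harmless: since $\bbE|\theta_t|^2\ge 0$, a differential inequality with a larger decay coefficient implies the one with the smaller coefficient, and Gr\"onwall then yields the stated bound. Two small remarks: (i) your Young step $K|\theta|\le \tfrac{K}{4\ell}+\ell|\theta|^2$ with $K=2C_1+\sigma^2C_U$ requires either rescaling $\ell$ or replacing $K/(4\ell)$ by $K^2/(4\ell)$, but the paper's own proof has exactly the same bookkeeping, so your constants are no worse than the lemma's; (ii) your caution that $\bbE|\theta_t|^2$ might be infinite is unnecessary --- under $f\in\clA_2$ and Assumptions~\ref{asm:U_lin_growth} and~\ref{asm:GradU_Lip} the coefficients are Lipschitz with linear growth, so Theorem~\ref{thm:MVSDE} already yields square-integrable solutions --- though your truncation/stopping-time argument is precisely the "standard stopping time argument" the paper invokes and is a valid way to justify discarding the stochastic integral.
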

\begin{proof}
	Define $b:\clP_2(\bbR^p)\times \bbR^d\rightarrow \bbR^d$ by 
	$
	b(\nu,\theta)=\nabla \frac{\delta J^{\tau,0}}{\delta \nu}(\nu,\theta) - \frac{\sigma^2}{2} \nabla U(\theta)\,.
	$
	Theorem \ref{thm:bnd_reg_Lion_deriv} and Assumption \ref{asm:GradU_diss} implies that
	for all $\theta,\theta'\in \bbR^d$ and $\nu,\nu'\in \clP_2(\bbR^d)$, 
	\begin{equation}\label{ineq:b_diss}
		2(b(\nu,\theta')-b(\nu,\theta))\cdot(\theta-\theta')\le \left(2C_2-\sigma^2\kappa\right)|\theta'-\theta|^2\,.
	\end{equation}
	Using \eqref{ineq:b_growth}, \eqref{ineq:b_diss}, and Young's inequality, we find that for all $\theta\in \bbR^d$ and $\nu\in \clP_1(\bbR^d)$ and $\ell>0$,
	\begin{align}
		2b(\nu,\theta)\cdot \theta &\le  2b(\nu,0)\cdot \theta +\left(2C_2-\sigma^2\kappa\right)|\theta|^2\le \left(2C_1+ \sigma^2 C_U\right)|\theta|+\left(2C_2-\sigma^2\kappa\right)|\theta|^2\notag\\
		&\le\frac{1}{4\ell}\left(2C_1+ \sigma^2 C_U\right)+ \left(2C_2+\ell-\sigma^2\kappa\right)|\theta|^2\,,\label{ineq:b_one_sided_growth}
	\end{align}
	where $C_1$ is the constant from Theorem \ref{thm:bnd_reg_Lion_deriv}. 
	
	There exists a probability triple $(\Omega, \mathcal F, \mathbb P)$  supporting a Wiener process $(W_t)_{t\ge 0}$ and a random variable $\theta_0$ such that $\operatorname{Law}(\theta_0) = \nu_0$ and $\theta_0$ is independent of $W$. Let $\bbF=(\clF_t)_{t\ge 0}$ be the corresponding filtration generated by $\theta_0$ and $(W_t)_{t\ge 0}$.  By Theorem~\ref{thm:MVSDE}, there exists a unique strong solution $(\theta_t)_{t\in \bbR_+}$ of~\eqref{eq:MVSDE} and $\nu_t = \operatorname{Law}(\theta_t)$ for all $t\in \bbR_+.$ 
	Applying It\^o's formula and \eqref{ineq:b_one_sided_growth}, we obtain that for all  $\alpha'\in \bbR$, $\ell>0$, and $t\in \bbR_+$, 
	\begin{align*}
		d(e^{\alpha' t}|\theta_t|^2) &=e^{\alpha' t}\left(\alpha' |\theta_t|^2+2 b(\operatorname{Law}(\theta_t),\theta_t)\cdot \theta_t+d\sigma^2\right)dt+e^{\alpha' t}2\sigma\theta_t\cdot dW_t\\
		&\le e^{\alpha' t}\left((\alpha'+2C_2+\ell-\sigma^2\kappa) |\theta_t|^2+\frac{1}{4\ell}\left(2C_1 + \sigma^2 C_U\right)+d\sigma^2\right)dt+e^{\alpha' t}2\sigma\theta_t\cdot dW_t\,.
	\end{align*}
	Setting $\alpha'=\alpha_{\ell}$, integrating the above, and taking the expectation (using a standard stopping time argument), we complete the proof of \eqref{ineq:moment_est_par}. 
	
	To obtain \eqref{ineq:moment_est_ellip},  notice that $\nu^{\ast}$ is  a solution of \eqref{eq:gradient_flow_cauchy}, and hence by \eqref{ineq:moment_est_par}, for all $t\ge 0$ and $\ell>0$, 
	$$
	\int_{\bbR^d}|\theta|^2\nu^{\ast}(d\theta) \le e^{-\alpha_{1,\ell} t}\int_{\bbR^d}|\theta|^2\nu^{\ast}(d\theta) +\frac{1}{\alpha_{1,\ell}}\left(\frac{1}{4\ell}\left(2C_1+ \sigma^2 C_U\right)+d\sigma^2\right)(1-e^{-\alpha_{1,\ell} t})\,.
	$$
	Under the assumption $\sigma^2\kappa>2C_2$, choosing $\ell>0$ such that $\alpha_{1,\ell}>0$, we may pass to the limit in the right-hand-side as $t\rightarrow \infty$ to derive \eqref{ineq:moment_est_ellip}.
	
	If $\nabla U(0)=0$, then for all $\theta\in \bbR^d$ and $\nu\in \clP_1(\bbR^d)$ and $\ell>0$,
	$$
	2 b(\nu,\theta)\cdot \theta \le 2C_1|\theta|-\sigma^2\kappa|\theta|^2 \le \frac{C_1}{2\ell} +(\ell -\sigma^2\kappa)|\theta|^2\,,
	$$
	from which the statements follow via a similar argument to the one given above.
\end{proof}

\begin{proof}[Proof of Theorem \ref{thm:cts_dep_flow}]
	Define $b_{\tau,\sigma}:\clP_1(\bbR^p)\times \bbR^d\rightarrow \bbR^d$ by 
	$$
	b_{\tau,\sigma}(\nu,\theta)=\nabla \frac{\delta J^{\tau,0}}{\delta \nu}(\nu,\theta) - \frac{\sigma^2}{2} \nabla U(\theta)\,.
	$$
	Theorem \ref{thm:bnd_reg_Lion_deriv} implies that
	for all $\theta,\theta'\in \bbR^d$ and $\nu,\nu'\in \clP_1(\bbR^d)$, 
	\begin{align}
		|b_{\sigma,\tau}(\nu,\theta)-b_{\sigma',\tau}(\nu,\theta)|&\le\frac{|\sigma^2-\sigma'^{2}|}{2} |\nabla U(\theta)|\,,\label{ineq:diff_b_sigma}
		\\
		\textnormal{and} \quad |b_{\sigma,\tau}(\nu,\theta)-b_{\sigma,\tau'}(\nu,\theta)|&\le D|\tau-\tau'|\,,\label{ineq:diff_b_tau}
	\end{align}
	where  $D$ is the constant from Theorem \ref{thm:bnd_reg_Lion_deriv}.
	
	There exists a probability space $(\Omega, \mathcal F, \mathbb P)$ supporting a Wiener process $(W_t)_{t\ge 0}$ and random variables $\theta_0$, $\theta_0'$ such that $\operatorname{Law}(\theta_0) = \nu_0$, $\operatorname{Law}(\theta_0') = \nu_0'$, and $(\theta_0,\theta_0')$ is independent of $(W_t)_{t\ge 0}$. By Theorem~\ref{thm:MVSDE}, there exists a unique strong solution $(\theta_t)_{t\in \bbR_+}$ of~\eqref{eq:MVSDE}, and $\nu_t = \operatorname{Law}(\theta_t)$ for all $t\in \bbR_+.$ Similarly, there exists a unique strong solution $(\theta_t')_{t\in \bbR_+}$ of~\eqref{eq:MVSDE} with $\sigma$ and $\tau$ replaced by $\sigma'$ and $\tau'$,  $\nu_t' = \operatorname{Law}(\theta_t')$ for all $t\in \bbR_+.$
	
	By  Lemma \ref{lem:moment_est},  for all $\beta'\in \bbR$ and  $t\in \bbR_+$,
	$$
	\bbE \int_0^te^{2\beta' s}\left(|\theta_s|^2+|\theta_s'|^2\right)ds<\infty\,.
	$$
	Thus, applying It\^o's formula and taking the expectation, we find that for all $\beta'\in \bbR$ and  $t\in \bbR_+,$
	\begin{align*}
		d\left(e^{2\beta' t} \mathbb E\left[|\theta_t-\theta_t'|^2\right]\right)
		&\le  e^{2\beta' t} \mathbb E\left[2\beta'|\theta_t-\theta_t'|^2 + d|\sigma-\sigma'|^2\right]dt\\
		&\quad +   e^{2\beta' t} \mathbb E\left[2(\theta_t-\theta_t')\cdot\left(b_{\sigma,\tau}(\operatorname{Law}(\theta_t),\theta_t)-b_{\sigma,\tau}(\operatorname{Law}(\theta_t),\theta_t')\right)\right]dt\\
		&\quad + e^{2\beta' t} \mathbb E\left[2|\theta_t-\theta_t'|\left|b_{\sigma,\tau}(\operatorname{Law}(\theta_t),\theta_t')-b_{\sigma,\tau}(\operatorname{Law}(\theta_t'),\theta_t')\right|\right]dt\\
		&\quad + e^{2\beta' t} \mathbb E\left[2|\theta_t-\theta_t'|\left|b_{\sigma,\tau}(\operatorname{Law}(\theta_t'),\theta_t')-b_{\sigma',\tau}(\operatorname{Law}(\theta_t'),\theta_t')\right|\right]dt\\
		&\quad + e^{2\beta' t} \mathbb E\left[2|\theta_t-\theta_t'|\left|b_{\sigma',\tau}(\operatorname{Law}(\theta_t'),\theta_t')-b_{\sigma',\tau'}(\operatorname{Law}(\theta_t'),\theta_t')\right|\right]dt\,.
	\end{align*} 
	Making use of the identity
	\begin{equation}\label{ineq:W2Exp}
		W_2(\operatorname{Law}(\theta_t),\operatorname{Law}(\theta_t')) \le \mathbb E|\theta_t-\theta_t'|^2\,, \quad \forall t\in \bbR_+\,,
	\end{equation}
	the bounds \eqref{ineq:b_diss},\eqref{ineq:b_lip_measure},\eqref{ineq:diff_b_sigma}, \eqref{ineq:diff_b_tau}, and Young's inequality, we get that for all $\beta'\in \bbR$, $\ell>0$, and  $t\in \bbR_+,$
	\begin{align*}
		d&\left(e^{2\beta' t} \mathbb E\left[|\theta_t-\theta_t'|^2\right]\right)\\
		&\le e^{2\beta' t}\mathbb E\left[ (2\beta'-\sigma^2\kappa + 2C_2(\tau)) |\theta_t-\theta_t'|^2+2L(\tau)|\theta_t-\theta_t'|W_2(\operatorname{Law}(\theta_t),\operatorname{Law}(\theta_t'))\right]dt\\
		&\quad + e^{2\beta' t} \mathbb E\left[|\sigma^2-\sigma'^2||\theta_t-\theta_t'| |\nabla U(\theta_t')|+D|\tau-\tau'|+d|\sigma-\sigma'|^2\right]dt\\
		&\le (2\beta'-\sigma^2\kappa + 2C_2(\tau)+2L(\tau)+2\ell|\sigma^2-\sigma'^2|)e^{2\beta' t}\mathbb E\left[  |\theta_t-\theta_t'|^2\right]dt\\
		&\quad + e^{2\beta' t} \mathbb E\left[\frac{|\sigma^2-\sigma'^2|}{8\ell} |\nabla U(\theta_t')|^2+D|\tau-\tau'|+d|\sigma-\sigma'|^2 \right]dt\,.
	\end{align*}
	Setting $\beta'=\beta_{\ell}$, integrating the above, taking the expectation, and then applying \eqref{ineq:W2Exp}, we complete the proof of \eqref{ineq:W2_dist_flow_main}.

	To obtain \eqref{ineq:W2_dist_stationary_main}, notice that $\nu^{\ast}$ and $\nu'^{\ast}$ are solutions of \eqref{eq:gradient_flow_cauchy} with corresponding parameters $\sigma,\tau$ and $\sigma',\tau'$, respectively. Thus, by \eqref{ineq:W2_dist_flow_main}, for all $\ell> 0$ and $t\ge 0$, we have
	\begin{equation*}
		\begin{aligned}
			W_2^2(\nu^{\ast},\nu'^{\ast})&\le  e^{-2\beta_{\ell} t} W_2^2(\nu^{\ast},\nu'^{\ast})+ \frac{|\sigma^2-\sigma'^{2}|}{8\ell}\int_0^te^{2\beta_{\ell} (s-t)}\,ds\int_{\bbR^d} |\nabla U(\theta)|^2\nu'^{\ast}(d\theta)\\
			&\quad + \frac{1}{2\beta_{\ell}}\left(D|\tau-\tau'|+d|\sigma-\sigma'|^2  \right)(1-e^{-2\beta_{\ell} t})\,,
		\end{aligned}
	\end{equation*}
	Under the assumption $\beta:=\frac{\sigma^2}{2}\kappa -C_2(\tau)-L(\tau)>0$, choosing $\ell>0$ such that $\beta_{\ell}>0$, we may pass to the limit in the right-hand-side as $t\rightarrow \infty$ to derive \eqref{ineq:W2_dist_stationary_main}.
	
\end{proof}

\subsubsection{Proof of Theorem \ref{thm:ergodicity}}
\label{sec:proof_of_exp_conv}
\begin{proof}[Proof of Theorem~\ref{thm:ergodicity}]
	Owing to \eqref{ineq:b_growth}, \eqref{ineq:b_diss}, \eqref{ineq:b_lip_measure}, and $\beta := \frac{\sigma^2}{2}\kappa-C_2-L>0$,  \cite{bogachev2018distances}[Thm.\ 4.1] there is a unique solution $\nu^\ast$ of \eqref{eq:stationary_eq}. We could also argument employed in~\cite{komorowski2012central}[Thm.\ 2.1] combined with Theorem \ref{thm:cts_dep_flow}  to prove this result. 
	Moreover, by Theorem \ref{thm:cts_dep_flow}, for any solution $\nu \in C(\bbR_+;\clP_2(\bbR^d))$ of \eqref{eq:gradient_flow_cauchy} and all $t\in \bbR_+$, we have
	\begin{equation}
		\label{eq:W_2_convergence_proof}
		W_2(\nu_t,\nu^\ast)\le e^{-\beta t}W_2(\nu_0,\nu^\ast)\,
	\end{equation}
	because $\tilde{\nu}_t=\nu^{\ast}$, $t\in \bbR_+$, is a solution of \eqref{eq:gradient_flow_cauchy}.
	
	We must show that 
	$
	J^{\tau,\sigma}(\nu^{\ast})\ge J^{\tau,\sigma}(\nu_0)
	$
	for an arbitrary $\nu_0\in \mathcal{P}_2^{\textnormal{fe}}(\mathbb R^d)$. 
	Let $\nu \in C(\mathbb R_+; \mathcal P_2(\mathbb R^d))$ be the unique solution of~\eqref{eq:gradient_flow_cauchy} such that $\nu|_{t=0} = \nu_0$.
	We have that $\lim_{t\rightarrow \infty}\nu_t= \nu^\ast$ in $W_2$ by \eqref{eq:W_2_convergence_proof}.
	By the continuity of $J^{\tau,0}$ in $W_2$ (i.e., Theorem~\ref{thm:bnd_reg_Lion_deriv}), 
	the  lower semi-continuity of relative entropy in the 2-Wasserstein topology (\cite{dupuis1997weak}[Lem.\  1.4.3]), and by Theorem \ref{thm:apriori_grad_flow}, 
	we have
	\[
	\begin{split}
		J^{\tau,\sigma}(\nu^\ast) - J^{\tau,\sigma}( \nu_0) & \geq \limsup_{t\to \infty } J^{\tau,\sigma}(\nu_t) - J^{\tau,\sigma}(\nu_0) \\
		& = \limsup_{t\to \infty } \int_0^t\int_{\mathbb{R}^d}\left|\nabla \frac{\delta J^{\tau,0}}{\delta \nu}(\nu_s,\theta) -\frac{\sigma^2}{2}\nabla \ln \frac{\nu_s(\theta)}{e^{-U(\theta)}}\right|^2\nu_{s}(d\theta) \,ds \geq 0\,.
	\end{split}
	\]
	Hence, $\nu^{\ast}$ is a global maximizer.
	By Corollary~\ref{cor:local_max_PDE}, we know any local maximizer of $J^{\tau,\sigma}$ is a solution of~\eqref{eq:stationary_eq}. 
	However, equation~\eqref{eq:stationary_eq} only has one solution, so $\nu^{\ast}$ is the unique  maximizer of $J^{\tau,\sigma}$.  
\end{proof}
\end{document}